\numberwithin{equation}{section}
\newsavebox{\@brx}
\newcommand{\llangle}[1][]{\savebox{\@brx}{\(\m@th{#1\langle}\)}%
  \mathopen{\copy\@brx\kern-0.5\wd\@brx\usebox{\@brx}}}
\newcommand{\rrangle}[1][]{\savebox{\@brx}{\(\m@th{#1\rangle}\)}%
  \mathclose{\copy\@brx\kern-0.5\wd\@brx\usebox{\@brx}}}
\DeclareMathOperator{\Aut}{Aut}
\DeclareMathOperator{\id}{id}
\DeclareMathOperator{\ch}{char}
\DeclareMathOperator{\GL }{GL }
\DeclareMathOperator{\SL}{SL}
\DeclareMathOperator{\Sp}{Sp}
\DeclareMathOperator{\Mat}{Mat}
\DeclareMathOperator{\Orth}{O}
\DeclareMathOperator{\G}{G}
\DeclareMathOperator{\q}{q}
\newtheorem{thm}[subsection]{Theorem}
\newtheorem{prop}[subsection]{Proposition}
\newtheorem{lem}[subsection]{Lemma}
\newtheorem{cor}[subsection]{Corollary}
\newtheorem{ex}[subsection]{Example}
\newcommand{\bi}[1]{\langle #1 \rangle}
\title{Involutions of type $\G_2$ over a field of characteristic two}
\author{John Hutchens \\ \textit{Winston-Salem State University} \\ hutchensjd@wssu.edu \\  \\ Nathaniel Schwartz \\ \textit{Washington College} \\ nschwartz2@washcoll.edu}
\begin{document}

\maketitle

\section{Introduction}

In this paper we establish a classification of isomorphism classes of automorphisms of order $2$ of algebraic groups of the form $\Aut(C)$ where $C$ is an octonion algebra defined over a field of characteristic $2$ and their fixed point groups.  Groups of type $\G_2$ were first defined over fields of characteristic $2$ by Dickson in \cite{di01,di05} and later by Chevalley in \cite{ch55}.  For fields of not characteristic $2$, a similar classification has been carried out by Yokota in \cite{yo90} and Hutchens in \cite{hu14}.

The orginal study of symmetric spaces began with Cartan \cite{car26, car27} in 1926-27, followed by Gantmacher in \cite{ga39}.  Berger also classifies symmetric spaces of real Lie algebras in \cite{be57}.   Helminck defined a generalization of these real symmetric spaces and called them symmetric $k$-varieties.  He provided a characterization of these spaces in \cite{he00,he88} over fields of not characteristic $2$.  For finite fields this problem was solved by Thomas in \cite{th69}, and Aschbacher and Seitz included these results to complete their classification of such structures for groups of Lie type over fields of even order in \cite{as76}.  What we call involutions of type I correspond to Thomas' central involutions, and our involutions of type II correspond to his non-central involutions over finite fields.  Schwartz has provided descriptions for algebraic groups of classical type $A_n$ over fields of characteristic $2$ in \cite{sc16}, and Hutchens has over fields of characteristic not $2$ for algebraic groups of type $\G_2$ in \cite{hu14}.  

In \ref{class} we prove the classification of $k$-involutions depends on isomorphism classes of four dimensional subalgebras of $C$.  This is also true when the characteristic is not $2$, which Jacobson proves in \cite{ja58}.  His argument relies on the fact that there is a $(-1)$-eigenspace for each element of order $2$ in $\Aut(C)$, the existence of which we forfeit over characteristic $2$.  Also, four dimensional subalgebras of $C$ are all of quaternion type when the characteristic is not $2$.  In the characteristic $2$ case we have another type of four dimensional subalgebra that corresponds to elements of order $2$ in $\Aut(C)$.   A representative of this second type of involution is used by Callens and de Medts in \cite{cdm14} to construct a class of Moufang sets.  Recently there has been a lot interest in topics related to algebras and algebraic groups over characteristic $2$, for example \cite{no16,dol14,bc15}.

This paper is organized as follows. \ref{prereqs} contains a brief background on composition algebras, including necessary prerequisites. Here, we generally follow the conventions and notations used by Hoffmann and Laghribi in \cite{hl04}. In \ref{4dalgs} we settle on a basis for our octonion algebra $C$, and we describe two different four dimensional subalgebras: quaternion subalgebras and four-dimensional totally singular subalgebras. The latter is actually a Clifford algebra that only occurs in characteristic 2. We show that every involution in $\Aut(C)$ fixes point-wise one of these two types of subalgebras.  We call the involutions that fix a quaternion subalgebra {\em type I}, and involutions that fix a four-dimensional totally singular subalgebra {\em type II}. In \ref{typeI}, we see that there is only one conjugacy class of involutions of type I in the case that the fixed subalgebra is split, and there are no involutions when the fixed subalgebra is a division quaternion subalgebra. We show in \ref{fixpt1} that the fixed point groups of type I are of the form $\SL_2(k) \times G_+(k)$, and subsequently describe the fixed point groups of type II. 

In \ref{typeII} we describe the involutions in $\Aut(C)$ fixing a four dimensional totally singular subalgebra $B$. We show in \ref{tinduce} that these involutions are induced by elements in the set $\widehat{B}$  of elements whose norm is equal to the coefficient of the identity component. In \ref{gfixB} we show every such involution acts invariantly on $B$. In \ref{gidcor} we establish that, if $B$ is a division algebra, then its automorphism group is trivial. \ref{autCconjugate} summarizes how two involutions can be conjugate when the fixed point algebra is division.  We find that all involutions are conjugate to the map $u \mapsto u + e$; hence there is a single conjugacy class of involutions when $B$ is not a division algebra. The entire situation is summarized in \ref{classesBsummary}. A short discussion of the corresponding Galois cohomology follows.

\section{Preliminaries and composition algebras}
\label{prereqs}

If $k$ is a field of characteristic $2$, we call $k$ a \emph{perfect field} if $k^2=k$.  This is the case for all finite and algebraically closed fields of characteristic $2$. For a classification of involutions and their centralizers of $\Aut(C)$ over finite fields see \cite{as76} and \cite{th69}.  

We denote by $\Aut(C,B)$ the subgroup of $\Aut(C)$ leaving a subaglebra $B \subset C$ invariant and by $\Aut(C)^B$ the subgroup of $\Aut(C)$ leaving $B$ fixed elementwise.  The additive group of a field $k$ will be denoted by $G_+(k)$ and the multiplicative group of $k$ is denoted by $G_*(k)$.  When $\ch(k)=2$ the group $G_+(k)$ has subgroups $G_+(k^2)$ and
\[
\wp(k) = \{ x^2 + x \ | \ x \in k \}.
\]

When we consider groups of type $\G_2$ over fields of characteristic not $2$ they are induced by an element of an automorphism group of an octonion algebra over $k$ that fixes a quaternion subalgebra.  We see in \cite{hu14} that the isomorphism classes of quaternion subalgebras of an octonion algebra $C$ are in bijection with the conjugacy classes of $k$-involutions of $\Aut(C)$.  Also, the fixed point group of these $k$-involutions of $\Aut(C)$ have the form
\[ \Aut(C)^{\mathcal{I}_t} \cong \Aut(D) \ltimes \Sp(1,k), \]
where $D$ is the quaternion subalgebra of $C$ fixed by $t\in \Aut(C)$ and $\Sp(1,k)$ is the group of norm $1$ quaternions in $D$. We have a similar situation in characteristic $2$.  In \ref{mainlem1} and \ref{mainlem2} we show that involutions of $\Aut(C)$ in characteristic $2$ are induced by elements in $\Aut(C)$ that fix four dimensional subalgebras of $C$, and in \ref{fixpt1} we see that the fixed point groups take a slightly different form than in the characteristic not 2 case.

For the remainder of this paper we will only consider fields of characteristic $2$ unless otherwise stated. Let $k$ be a field of characteristic 2 and $V$ be a vector space over $k$; then $\q:V \to k$ is a \emph{quadratic form} on $V$ if
\[
\q( \lambda x ) = \lambda^2 \q(x) 
\]
for all $\lambda \in k$ and $v \in V$, such that 
\[
\q(x + y) + \q(x) + \q(y) = b(x,y)
\]
for all $x, y \in V$, where $\bi{\ ,\ }: V\times V \to k$ is a symmetric bilinear form. 

A quadratic form is \emph{non-degenerate} if the associated bilinear form is non-degenerate; that is, if $V^{\perp} = 0$. In other words, $\bi{\ ,\ }$ is non-degenerate if $\bi{x,y} = 0$ for all $x,y \in V$ implies $x = 0$.  A nonzero vector $v \in V$ is called isotropic if $x \in V^{\perp}$ and $\q(x) = 0$. 
Since $0 = \q(0) = \q(x + x + \q(x) + \q(x) = \bi{x,x}$, $\bi{\ ,\ }$ is also alternating. Note that $\bi{\ ,\ }$ is uniquely determined from $\q$, but $\q$ is not uniquely determined by $\bi{\ ,\ }$.

Suppose $V$ has dimension $2r + s$. Then there exists a basis for $V$ of the form $\{ e_i, f_i, g_j \}$ and elements $a_i, b_i, c_j \in k$ with $1 \le i \le r$, $1 \le j \le s$, such that
\[
\q(v) = \sum_{i=1}^r \left(a_ix_i^2 + x_iy_i + b_iy_i^2\right) + \sum_{j = 1}^s c_j z_j^2
\]
for any $v = \sum_{i = 1}^2 (x_ie_i + y_ij_i) + \sum_{j = 1}^s x_jg_j$. We denote this quadratic form by
\[
[a_1, b_1]\perp[a_2,b_2]\perp \cdots \perp [a_r, b_r] \perp \langle c_1, \dots, c_s\rangle,
\]
where $\langle c_1, \dots, c_s\rangle$ is called the {\it diagonal } part of $\q$. 

An {\it isometry} of two quadratic spaces $(V_1, \q_1)$ and $(V_2, \q_2)$ is a bijection $\sigma : V_1 \to V_2$ such that $\q_2(\sigma(v)) = q_1(v)$ for all $v \in V_1$. If such an isometry exists, we say that $\q_1$ and $\q_2$ are equivalent.

As noted in \cite{hl04} the following equivalences hold for all $a,b,c,d \in k$ and $x \in k^*$:
\begin{align}
\bi{a} &\cong \bi{x^2 a} \\
[a,b] &\cong [ax^2, bx^{-2}] \\
\bi{a,b} &\cong \bi{a,a+b} \cong \bi{b,a} \\
[a,b] &\cong [a, a + b + 1] \cong [b, a] \\
[a,b]\perp \bi{c} &\cong [a + c, b] \perp \bi{c}\\
[a,b]\perp [c,d] &\cong [a + c, b] \perp [c, b + d].
\end{align}

An {\em$n$-fold Pfister form} is a form of type $b = \bi{1,a_1}_b\otimes \cdots \otimes \bi{1,a_n}_b$ for some $a_i \in G_*(k)$. In this case, the quadratic form is totally singular, which implies the form is quasi-linear, and we write $\q = \llangle a_1, \dots, a_n\rrangle$. The form $\q$ is called an {\em $n$-fold quasi-Pfister form}.

A {\em hyperbolic plane} $\mathbb{H}$ is a non-singular form isometric to $[0,0]$. Any non-singular form which is an orthogonal sum of hyperbolic planes is called hyperbolic.

A {\em composition algebra} is a $k$-algebra with an identity element $e$ equipped with a nondegenerate quadratic form $\q:C \to k$ which permits composition, i.e.
\[ \q(xy) = \q(x)\q(y), \]
for all $x,y \in C$.  

It was shown by Hurwitz \cite{hu22} that such algebras exist only in $1, 2, 4$ and $8$ dimensions.  Over a field of characteristic $2$ composition algebras exist in $2,4$ and $8$ dimensions.  We can construct composition algebras of  larger dimension from composition algebras of smaller dimension by using the Cayley-Dickson process.  Usually we can choose an orthogonal basis with respect to a bilinear form.  The space $ke$ is not nonsingular when $\ch(k)=2$.  In order to get around this we choose any element $u\in C$ such that $\langle u,e \rangle \neq 0$.  By doing so we can arrive at the following result concerning the basis of an octonion algebra.

\begin{prop}\label{basis}
If $\ch(k) = 2$ (except when $k =\mathbb{F}_2$, $\dim_k(C) = 2$ with $\q$ isotropic) then $C$ has a basis $\{e, u, v, uv, w, uw, vw, (uv)w\}$ with
\[ \langle e,u \rangle = 1, \langle v, uv \rangle = \q(v), \langle w, uw \rangle = \q(w), \langle vw, (uv)w \rangle = \q(v)\q(w) \]
and all the other bilinear forms between distinct pairs of basis vectors are zero.
\end{prop}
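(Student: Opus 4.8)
The plan is to construct the basis by the Cayley--Dickson doubling process, starting from $ke$ and enlarging it one quadratic step at a time, checking the asserted bilinear relations at each stage with the standard identities for composition algebras (as in \cite{hl04}). Throughout I will use that the conjugation $\bar x = \langle x,e\rangle e + x$ satisfies $x\bar x = \bar x x = \q(x)e$ and $x + \bar x = \langle x,e\rangle e$, that $\langle xy,z\rangle = \langle y,\bar x z\rangle = \langle x,z\bar y\rangle$, and that linearizing the composition law $\q(xa)=\q(x)\q(a)$ yields the scaling identity $\langle xa,ya\rangle = \q(a)\langle x,y\rangle$. These three facts do all the computational work.

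First I would produce $u$. Since $\q$ is non-degenerate its polar form $\langle\ ,\ \rangle$ is non-degenerate, so (outside the excluded binary case over $\mathbb{F}_2$) there is a vector meeting $e$ non-trivially; rescaling it gives $u$ with $\langle e,u\rangle = 1$. Because $\langle e,e\rangle = 0$ in characteristic $2$, the Gram matrix of $\{e,u\}$ is $\left(\begin{smallmatrix}0&1\\1&0\end{smallmatrix}\right)$, which is non-singular, so $C_1 := \spa\{e,u\}$ is a non-degenerate subspace; and from $\bar u = e + u$ one gets $u^2 = u + \q(u)e \in C_1$, so $C_1$ is a $2$-dimensional composition subalgebra and $C = C_1 \perp C_1^{\perp}$.

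Next I would double twice. A non-singular quadratic space of positive dimension cannot be totally isotropic, since otherwise its polar form would vanish; hence $C_1^{\perp}$ contains an anisotropic vector $v$, and $\langle v,e\rangle = 0$ forces $\bar v = v$. Setting $C_2 := C_1 \oplus C_1 v = \spa\{e,u,v,uv\}$, the Cayley--Dickson construction shows $C_2$ is a $4$-dimensional composition subalgebra in which $C_1$ and $C_1 v$ are orthogonal; the scaling identity then gives $\langle v, uv\rangle = \langle ev, uv\rangle = \q(v)\langle e,u\rangle = \q(v)$, while all remaining pairings inside $C_2$ vanish, so the Gram matrix of $\{e,u,v,uv\}$ is block-diagonal with blocks $\left(\begin{smallmatrix}0&1\\1&0\end{smallmatrix}\right)$ and $\left(\begin{smallmatrix}0&\q(v)\\\q(v)&0\end{smallmatrix}\right)$, and in particular non-singular, so $C = C_2 \perp C_2^{\perp}$. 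Repeating the argument, I pick an anisotropic $w \in C_2^{\perp}$ (with $\bar w = w$) and form $C = C_2 \oplus C_2 w = \spa\{e,u,v,uv,w,uw,vw,(uv)w\}$; the scaling identity with $a = w$ transports the Gram data of $C_2$ to the new layer, giving $\langle w, uw\rangle = \q(w)\langle e,u\rangle = \q(w)$ and $\langle vw, (uv)w\rangle = \q(w)\langle v,uv\rangle = \q(v)\q(w)$, with all other new-layer pairings zero, while orthogonality of $C_2$ and $C_2 w$ kills every cross pairing. This produces exactly the stated relations and an $8\times 8$ block-diagonal, non-singular Gram matrix, so the eight vectors are linearly independent and form a basis.

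The main obstacle is the characteristic-$2$ bookkeeping rather than any single hard step: I must confirm that each orthogonal complement $C_i^{\perp}$ genuinely contains an anisotropic vector so the doubling can continue, and that $C_i$ remains non-degenerate at every stage, both of which are handled uniformly by the block-diagonal Gram determinants being a power of $\q(v)\q(w)$ and hence nonzero. The one genuinely characteristic-$2$ feature worth emphasizing is that $ke$ cannot be split off orthogonally ($\langle e,e\rangle = 0$), which forces the normalization $\langle e,u\rangle = 1$ and, through the scaling identity, makes the off-diagonal pairings $\q(v)$, $\q(w)$, $\q(v)\q(w)$ nonzero rather than vanishing as they would for a trace-zero generator in odd characteristic; this is precisely the feature responsible for the totally singular four-dimensional subalgebras appearing later.
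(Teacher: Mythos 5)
Your proof is correct and takes essentially the same route the paper itself relies on, since the paper offers no detailed proof beyond the preceding sketch: replace the singular line $ke$ by the non-singular pair $\{e,u\}$ with $\langle e,u\rangle = 1$, then build up by Cayley--Dickson doubling with anisotropic $v$ and $w$ chosen in successive orthogonal complements, exactly as in \cite{sv00}. One cosmetic remark: the excluded case ($k=\mathbb{F}_2$, $\dim_k(C)=2$, $\q$ isotropic) is vacuous for an octonion algebra and does not in fact obstruct the existence of $u$ with $\langle e,u\rangle=1$ (take $u=(1,0)$ in $\mathbb{F}_2\times\mathbb{F}_2$), so your parenthetical attributing the exception to that step is harmless but slightly misplaced.
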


Composition algebras have a minimum equation
\begin{equation}\label{mineq}
x^2 + \langle x,e \rangle x + \q(x)e = 0, 
\end{equation}
which is often useful, as is its linearization
\begin{equation}\label{mineqlin}
xy + yx + \langle x,y \rangle e+ \langle x,e \rangle y + \langle y,e \rangle x = 0.
\end{equation}
We also define a map of order $2$ that is analogous to complex conjugation
\[ \bar{x} = \langle x,e \rangle e + x, \]
and note that $x^{-1} = \q(x)^{-1} \bar{x}$ when $\q(x) \neq 0$, and that $x^{-1}$ does not exist otherwise.

\section{Four dimensional subalgebras}\label{4Dsubalgebras}
\label{4dalgs}

When $\ch(k) \neq 2$, an octonion algebra $C$ has only nonsingular subalgebras, which are all quaternion subalgebras.  This is not true when $\ch(k) = 2$; there are also totally singular subalgebras with diagnolizable quadratic forms.  Along with quaternion subalgebras we are concerned with four dimensional totally singular subalgebras of $C$, as they correspond to a type of automorphisms of order $2$ in $\Aut(C)$.

When $k$ is not perfect we have octonion division algebras over fields of characteristic $2$.  In this case their four dimensional subalgebras are all division algebras, and so there are two main classes of of subalgebras: quaternion division algebras and four dimensional totally singular subalgebras.  The latter of these are purely separable field extensions of $k$.  We can also have split octonion algebras over non-perfect fields.  These algebras have both split and division quaternion subalgebras.  Four dimensional totally singular subalgebras can be either split, division, or an intermediate type.  

In general if we take $u \in C$ to be an element such that $\langle u,e \rangle =1$ we can construct a basis for $C$ by taking $v,w$ perpendicular to $u$ and $e$ to get
\[ \{e, u, v, uv, w, uw, vw, (uv)w \}, \]
with a non-singular bilinear form generated by a quadratic form 
\[
[1,\alpha] \perp \beta[1,\alpha] \perp \gamma([1,\alpha] \perp \beta[1,\alpha]).
\]
By taking subsets of this basis that are closed under multiplication we arrive at two main types; the quaternion basis
\[ \{e, u, v, uv \} \]
with a non-singular bilinear form generated by a quadratic form $[1,\alpha] \perp \beta[1,\alpha]$, and the set
\[ \{e, v, w, vw \} \]
with a totally singular bilinear form generated by a quasi-Pfister form $\llangle \beta, \gamma \rrangle$.

When $k$ is a perfect field, any octonion algebra is split.  In this case the quaternion and four dimensional totally singular subalgebras are all split as well.  If  $k$ is not perfect, within each of these main types of four dimensional subalgebras there could be many isomorphism classes.  For a more detailed discussion of quaternion algebras over characteristic $2$ see \cite{cdl15}.  In the following few results we see the importance of these four dimensional subalgebras.

\begin{prop}
Let $t \in \Aut(C)$ such that $t^2=\id$, then $C^t \neq ke$.
\end{prop}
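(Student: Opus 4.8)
The plan is to treat $t$ as a $k$-linear operator on $C$ and to replace the eigenspace decomposition that is available in the characteristic not $2$ setting by a nilpotency argument. Since every algebra automorphism fixes the identity, we have $e \in C^t$ and hence $ke \subseteq C^t$; it therefore suffices to prove that this inclusion is strict.

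First I would form the operator $s = t - \id$ on $C$. Expanding $(t - \id)^2 = t^2 - 2t + \id$ and using $\ch(k) = 2$ to kill the middle term gives $s^2 = t^2 + \id$, and substituting the hypothesis $t^2 = \id$ yields $s^2 = \id + \id = 0$. Thus $s$ is nilpotent of index at most $2$, so that $\operatorname{im}(s) \subseteq \ker(s)$.

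Next I would invoke the rank-nullity theorem. Setting $r = \dim_k \operatorname{im}(s)$ and $n = \dim_k \ker(s)$, we have $r + n = \dim_k C = 8$ since $C$ is an octonion algebra, while the inclusion $\operatorname{im}(s) \subseteq \ker(s)$ forces $r \le n$; together these give $n \ge 4$. Since $C^t = \ker(t - \id) = \ker(s)$, this shows $\dim_k C^t \ge 4$, and in particular $C^t \neq ke$.

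I expect the computation to be entirely routine; the one point that genuinely matters is the observation that $t - \id$ squares to zero, which is where the characteristic $2$ hypothesis does the essential work. In characteristic not $2$ one would instead diagonalize $t$ and work with its $(-1)$-eigenspace, as in Jacobson's treatment, but that eigenspace is forfeited here, and the nilpotency of $t - \id$ is exactly its substitute.
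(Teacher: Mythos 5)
Your proof is correct, and it takes a genuinely different route from the paper's. The paper argues by contradiction with an explicit construction: assuming $C^t = ke$, it picks two basis vectors $v, w$ not fixed by $t$, notes that $t(v)+v$ and $t(w)+w$ lie in $C^t = ke$, writes $t(v) = v + \delta_1 e$ and $t(w) = w + \delta_2 e$ with $\delta_1, \delta_2 \neq 0$, and then checks directly that $\delta_1^{-1}v + \delta_2^{-1}w$ is a fixed element outside $ke$ --- the two ``defects'' $\delta_1 e$ and $\delta_2 e$ cancel in characteristic $2$. Your argument instead observes that $s = t + \id$ satisfies $s^2 = t^2 + \id = 0$, so $\operatorname{im}(s) \subseteq \ker(s)$, and rank--nullity on the $8$-dimensional space $C$ forces $\dim_k C^t = \dim_k \ker(s) \geq 4$. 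This is a cleaner and strictly stronger conclusion: it proves not merely $C^t \neq ke$ but the quantitative bound $\dim_k C^t \geq 4$, which anticipates the later structure results (\ref{mainlem1} and \ref{mainlem2}) showing that the fixed subalgebra of a type I or type II involution is exactly four dimensional; it also works verbatim for any order-$2$ linear map on any finite-dimensional space over a field of characteristic $2$, so it isolates precisely where the hypothesis $\ch(k) = 2$ enters (nilpotency of $t + \id$ replacing the lost $(-1)$-eigenspace, just as you say). What the paper's approach buys in exchange is that it stays entirely inside the elementary fixed-point bookkeeping that the surrounding lemmas reuse (the trick of passing to $t(x) + x \in C^t$ reappears in the proofs of \ref{mainlem1} and \ref{mainlem2}), whereas your operator-theoretic argument, while shorter, is self-contained and not recycled later.
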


\begin{proof}
Assume $C^t = ke$. Then there exist distinct basis vectors $v, w \in C$ such that $t(v) \neq v$ and $t(w) \neq w$. Since $t^2 = \id$, $t(v) + v$ and  $t(w) + w$ are fixed by $t$ since, for example,
\[
t\big(t(v) + v\big) = t^2(v) + t(v) = v + t(v).
\]
So $t(v) + v,  t(w) + w \in C^t = ke$. Then $t(v) = v + \delta_1e$ and $t(w) = w + \delta_2 e$ for some $\delta_1, \delta_2 \neq 0$.  Observe that 
\[
t( \delta_1^{-1} v + \delta_2^{-1}w) = \delta_1^{-1}(v+\delta_1 e) + \delta_2^{-1}(w + \delta_2 e) = \delta_1^{-1} v + \delta_2^{-1} w,
\]
which contradicts the assumption that $C^t = ke$.
\end{proof}

\begin{lem} \label{mainlem1}
Let $C$ be an octonion algebra over a field $k$ with $\ch(k) = 2$, and let $u \in C$ such that $u \not\in ke$, $\langle u,e \rangle \neq 0$ and $t(u)=u$. Then $t \in \Aut(C)$ and $t^2 = \id$ if and only if there exists a quaternion subalgebra $D \subset C$ such that $t$ fixes $D$ elementwise.
\end{lem}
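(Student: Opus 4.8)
The plan is to prove the two implications separately, using throughout the characteristic‑$2$ identity that, whenever $t^{2}=\id$, the map $s=t+\id$ satisfies $s^{2}=t^{2}+\id=\id+\id=0$; hence $\operatorname{im} s\subseteq\ker s=C^{t}$, and expanding $t(xy)=t(x)t(y)$ via $t(x)=x+s(x)$ yields the twisted Leibniz rule $s(xy)=x\,s(y)+s(x)\,y+s(x)s(y)$. The element $u$ supplies a $2$‑dimensional composition subalgebra $K=ke+ku$ fixed pointwise by $t$; since $\langle u,e\rangle\neq 0$ while $\langle e,e\rangle=\langle u,u\rangle=0$ (the form is alternating), the norm restricts nonsingularly to $K$, isometric to a hyperbolic plane. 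Thus $K$ is nondegenerate, $C=K\perp K^{\perp}$, and both summands are $t$‑invariant. This decomposition is the structural backbone for both directions.

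For the reverse implication I would realise $C$ as a Cayley--Dickson double $C=D\oplus D\ell$ of the fixed quaternion $D$. A linear map fixing $D$ elementwise and respecting this decomposition must, by the multiplicativity computation on products $(a+b\ell)(c+d\ell)$, take the form $a+b\ell\mapsto a+(pb)\ell$; comparing norms shows this is an automorphism exactly when $\q(p)=1$, so $t\in\Aut(C)$. Composing two such maps multiplies the parameters, so $t^{2}$ corresponds to $p^{2}$, and $t^{2}=\id$ is equivalent to $p^{2}=e$, i.e.\ to $p$ being a trace‑zero norm‑one element. The distinguished $u$ is what reconciles the two sides of the biconditional: the hypotheses $t(u)=u$ and $\langle u,e\rangle\neq 0$ are compatible with this form of $t$ precisely when $p$ meets that condition, which is where I expect the reverse direction to require the most care in bookkeeping.

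The substantive direction is the converse: given that $t$ is an involution fixing such a $u$, I must produce the quaternion subalgebra. I would work inside $C^{t}$. The restriction $s|_{K^{\perp}}$ is a square‑zero endomorphism of the $6$‑dimensional space $K^{\perp}$, so its kernel has dimension at least $3$, giving $\dim\!\big(C^{t}\cap K^{\perp}\big)\geq 3$. It then suffices to find an anisotropic $v\in C^{t}\cap K^{\perp}$: for such a $v$ the span $K+Kv$ is a four‑dimensional nondegenerate, hence quaternion, subalgebra, and since $K,v\in C^{t}$ and $C^{t}$ is a subalgebra, $K+Kv$ lies in $C^{t}$ and is fixed elementwise. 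To produce $v$ I would begin with any anisotropic $z\in K^{\perp}$ (these exist because the octonion norm is a nonsingular Pfister form restricting nonsingularly to $K^{\perp}$) and, if $t(z)\neq z$, pass to $s(z)=z+t(z)\in C^{t}\cap K^{\perp}$, noting that in characteristic $2$ one has $\q(s(z))=\langle z,t(z)\rangle$.

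The hard part will be guaranteeing that this search succeeds, that is, that $C^{t}\cap K^{\perp}$ is \emph{not} totally singular. This is exactly where $\langle u,e\rangle\neq 0$ is indispensable: it excludes the totally singular (type II) alternative, in which $t$ would instead fix a four‑dimensional totally singular subalgebra and every fixed element, lying with $e$ in a space on which the form vanishes, would have trace zero. Concretely I would establish the dichotomy that $C^{t}$ either contains an anisotropic vector orthogonal to $K$ or is totally singular, and then invoke $u\in C^{t}$ with $\langle u,e\rangle\neq 0$ to rule out the latter; equivalently, I would extract the required anisotropic fixed vector directly by applying the twisted Leibniz rule to $u$ and a suitable generator of $K^{\perp}$. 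Settling this dichotomy is the technical crux of the lemma.
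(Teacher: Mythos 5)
Your forward direction reproduces, in cleaner language, exactly the paper's own argument: the paper also works with $K = ke+ku$, picks $v$ orthogonal to $e$ and $u$, and when $t(v)\neq v$ replaces it by $v+t(v)\in C^{t}\cap K^{\perp}$. You have correctly isolated the point everything hinges on: the fixed vector obtained this way satisfies only $\q\big(v+t(v)\big)=\langle v,t(v)\rangle$, which can vanish, and $\{e,u,v',uv'\}$ spans a quaternion (i.e.\ nondegenerate) subalgebra only when $\q(v')\neq 0$. But your proposal never settles this; it is a plan, not a proof. The ``dichotomy'' you propose (either $C^{t}\cap K^{\perp}$ contains an anisotropic vector or $C^{t}$ is totally singular) is not a reduction: since $C^{t}\supseteq K$ and $\langle e,u\rangle\neq 0$, $C^{t}$ is never totally singular, so the dichotomy is literally equivalent to the claim you need, and you offer no argument for it. Be warned also that dimension counting via $s^{2}=0$ cannot finish the job: for a type I involution $t(x+yw)=x+(ry)w$ one has $C^{t}=D\oplus Yw$ with $Y=\{y : ry=y\}$ two-dimensional, so $C^{t}$ is six-dimensional and degenerate; to rule out a totally singular $C^{t}\cap K^{\perp}$ one needs multiplicative input (for instance, $\langle ab,c\rangle = \langle b,\bar a c\rangle$ shows $u\cdot\big(C^{t}\cap K^{\perp}\big)\subseteq C^{t}\cap K^{\perp}$, making the fixed part of $K^{\perp}$ a module over $K$), which you do not develop. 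To be fair, the paper is equally sketchy here --- its final sentence simply asserts that $C^{t}$ ``must be a quaternion subalgebra,'' which is not even literally true given the six-dimensional fixed space above --- so you have put your finger on the genuine weak point, but you have not closed it.

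Your reverse direction, as set up, would fail. An automorphism fixing $D$ elementwise has the form $x+yw\mapsto x+(py)w$ with $\q(p)=1$, and $t^{2}=\id$ iff $p^{2}=e$, as you say; but the hypotheses $t(u)=u$ and $\langle u,e\rangle\neq 0$ impose no constraint whatsoever on $p$, because any $u\in D$ of nonzero trace is automatically fixed. Concretely, take $p$ with $\q(p)=1$ and $\langle p,e\rangle=1$: the minimum equation gives $p^{2}=p+e$, hence $p^{3}=e$, so $t$ is an order-$3$ automorphism fixing $D$ elementwise and fixing such a $u$. Thus your sentence ``compatible with this form of $t$ precisely when $p$ meets that condition'' is false, and no bookkeeping will rescue it: the ``if'' implication does not hold for a general automorphism fixing a quaternion subalgebra elementwise. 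The paper tacitly concedes this by proving only the ``only if'' direction; the lemma should be read (and your write-up should be restructured) as characterizing which involutions fix a quaternion subalgebra elementwise, not as a criterion for $t$ to be an involution.
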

\begin{proof}
Let $t \in \Aut(C)$ such that $t^2 = \id$.  If $u \not\in ke$ and $u \in C^t$, then there exists $v \in C$ such that
\[ \langle e,v \rangle = \langle u,v \rangle = 0, \]
and $\{ e, u, v, uv \}$ is the basis of a quaternion subalgebra $D \subset C$.  

If $v \in C^t$ then so is $uv$, and $D$ is fixed elementwise by $t$.  There are no other composition subalgebras containing $D$ other than $C$.  If $t$ fixes all of $C$, then $t = \id$, and if not, then $D = C^t$.

If $v \not\in C^t$, notice that 
\[
t(v + t(v)) = t(v) + tt(v) = v + t(v),
\]
and so $v + t(v) \in C^t$.  In this case 
\[
\langle v + t(v), u \rangle = \langle v, u \rangle + \langle t(v), u \rangle = 0 + \langle t(v),t(u) \rangle = \langle v,u \rangle = 0.
\]
Our only question is whether or not $v+t(v)  \in ke + ku$.  Since $\langle u,e \rangle \neq 0$, then 
\[
\langle x_0e+x_1u, v+t(v) \rangle = 0,
\]
so $v + t(v) \in (ke + ku)^{\perp} \subset C \setminus (ke + ku)$.  And $C^t$ must be a quaternion subalgebra as it contains $u$ such that $\langle u,e \rangle \neq 0$. 
\end{proof}

Now we must address the case when $t$ fixes $v \in C$ such that $v \neq e$ and $\langle v,e \rangle = 0$.  We proceed in a similar manner as in \ref{mainlem1}.

\begin{lem} \label{mainlem2}
Let $C$ be an octonion algebra over a field $k$ with $\ch(k) = 2$, and let $v \in C$ such that $v \not\in ke$, $\langle v,e \rangle =0$ and $t(v)=v$. Then $t \in \Aut(C)$ and $t^2 = \id$ if and only if there exists a four dimensional subalgebra $B \subset C$ such that $t$ fixes $B$ elementwise.
\end{lem}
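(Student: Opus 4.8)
The plan is to mirror \ref{mainlem1}, assembling a four-dimensional subalgebra inside the fixed space $C^t$ out of the pair $\{e,v\}$. Since $\langle v,e\rangle = 0$ and $v\notin ke$, the minimum equation \eqref{mineq} gives $v^{2} = \q(v)e$, so $ke+kv$ is a two-dimensional subalgebra; and because $\langle e,e\rangle = \langle e,v\rangle = \langle v,v\rangle = 0$, the bilinear form vanishes identically on $ke+kv$, so this subalgebra is totally singular. As $t(e)=e$ and $t(v)=v$, we have $ke+kv\subseteq C^t$. Following \ref{mainlem1}, I would first use \ref{basis} to produce $w$ with $\langle w,e\rangle = \langle w,v\rangle = 0$ and $w\notin ke+kv$, completing $\{e,v\}$ to a four-dimensional totally singular subalgebra with basis $\{e,v,w,vw\}$. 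If $w\in C^t$, then $vw = t(v)t(w) = t(vw)\in C^t$, the subalgebra is fixed elementwise, and we set $B=\spa\{e,v,w,vw\}$. If $w\notin C^t$, symmetrize by putting $w' := w+t(w)\in C^t$; since $t$ is an isometry fixing $e$ and $v$, one checks $\langle w',e\rangle = \langle w',v\rangle = 0$.

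The main obstacle is precisely the step at which \ref{mainlem1} invoked nondegeneracy. There $\langle u,e\rangle\neq 0$ made $ke+ku$ nonsingular, so $(ke+ku)^{\perp}\cap(ke+ku)=0$, and a symmetrized vector perpendicular to $e$ and $u$ was automatically forced to lie outside $ke+ku$, supplying a genuinely new direction. Here $ke+kv$ is totally singular, hence degenerate, and $(ke+kv)^{\perp}\supseteq ke+kv$; knowing that $w'$ is perpendicular to $e$ and $v$ therefore says nothing about whether $w'\in ke+kv$. This collapse of the orthogonality argument is the genuine characteristic-two phenomenon that prevents a verbatim repetition of \ref{mainlem1}, and it is where I expect the real difficulty to lie.

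To get around this I would trade the orthogonality argument for a dimension count. In characteristic $2$ the operator $N:=t+\id$ satisfies $N^{2}=t^{2}+\id=\id+\id=0$ (the cross term $2t$ vanishing), so $\operatorname{im}N\subseteq\ker N=C^t$; rank--nullity on the eight-dimensional space $C$ then forces $\operatorname{rank}N\le 4$ and hence $\dim_k C^t\ge 4$. If some $y\in C^t$ has $\langle y,e\rangle\neq 0$, then $y\notin ke$ and \ref{mainlem1}, applied with $u=y$, already produces a fixed quaternion subalgebra, which is the desired $B$. Otherwise $C^t\subseteq e^{\perp}$: when $\dim_k C^t=4$ the subalgebra $C^t$ is itself four-dimensional and contains $v$, so we take $B=C^t$, and in any case the room afforded by $\dim_k C^t\ge 4$ lets one pick a fixed $w\in C^t\setminus(ke+kv)$ for which $\spa\{e,v,w,vw\}$ is four-dimensional; by Artin's theorem this is an associative subalgebra, and it lies in $C^t$, hence is fixed elementwise. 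The converse is the easy direction and goes as in \ref{mainlem1}; as there, the substance of the lemma is the forward implication, whose crux is the degeneracy obstacle described above.
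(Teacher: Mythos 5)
Your route is genuinely different from the paper's. The paper argues by contradiction: assuming $C^t = ke \oplus kv$, it writes $t(u) = u + m_0e + m_1v$ and $t(w) = w + n_0e + n_1v$, expands $t(uw) = t(u)t(w)$, and observes that $t(uw) + uw \in ke \oplus kv$ forces $m_0 = m_1 = n_0 = n_1 = 0$, so $u$ and $w$ are fixed after all; it then asserts the fixed subalgebra must be four dimensional. Your replacement of that computation by the nilpotency of $N = t + \id$ and rank--nullity, which gives $\dim_k C^t \ge 4$ at once, is correct and arguably cleaner, and your reduction of the case where some fixed $y$ has $\langle y,e\rangle \neq 0$ to \ref{mainlem1} is sound (note $\langle e,e\rangle = 0$ in characteristic $2$, so such a $y$ automatically lies outside $ke$).

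However, your final branch has a genuine gap. When $C^t \subseteq e^{\perp}$ you handle $\dim_k C^t = 4$ correctly by taking $B = C^t$, but for $\dim_k C^t > 4$ you merely assert that ``the room'' lets one pick a fixed $w$ with $\spa\{e,v,w,vw\}$ four dimensional, with no argument; and for an individual $w$ the span really can collapse: if $\q(v) = \nu^2 \in k^2$, then $v + \nu e$ is a nonzero nilpotent, and any $w$ with $(v+\nu e)w = 0$ (such $w$ exist in the split case) satisfies $vw = \nu w$, so $\spa\{e,v,w,vw\}$ is only three dimensional. What closes the gap is the observation that this branch is vacuous. If $x, y \in C^t \subseteq e^{\perp}$, then $\bar{x} = x$, $\bar{y} = y$, and $xy \in C^t \subseteq e^{\perp}$, so $xy = \overline{xy} = \bar{y}\,\bar{x} = yx$ (conjugation is an anti-automorphism, see \cite{sv00}); the linearization \ref{mineqlin} then gives $\langle x,y \rangle e = xy + yx = 2xy = 0$. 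Hence $C^t$ is totally isotropic for the nondegenerate alternating form $\langle \ , \ \rangle$ on the eight-dimensional space $C$, so $\dim_k C^t \le 4$, which combined with your rank--nullity bound forces $\dim_k C^t = 4$ exactly, and $B = C^t$ always works in this branch. With that one extra observation your argument is complete --- and in fact tighter than the paper's sketch, which never explicitly rules out fixed algebras of dimension $3$ or of dimension greater than $4$ --- but as written, the case you wave through is unproven.
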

\begin{proof}
Assume that $C^t = ke \oplus kv$ such that $v^2 = \beta e$, and consider $w, u \in C$ such that $w^2= \gamma e$ and $u^2 = u + \alpha e$.   Then $t(u) \neq u$ and $t(w) \neq w$ but $t(u) + u, t(w) + w \in C^t = ke \oplus kv$.  Notice that
\begin{align*}
t(uw) &= (u+m_0e + m_1 v)(w + n_0e + n_1 v) \\
&= uw + (m_0n_0+\beta m_1n_1)e + (m_0n_1 + m_1n_0)v \\
& \ \ \ \ \ \ \ + m_0 w + m_1 vw + n_0 u + n_1 uv.
\end{align*}
Since $t(uw) + uw \in C^t = ke \oplus kv$, $m_0 = m_1 = n_0 = n_1 = 0$, and hence $t(uw) = uw$. Moreover, the fixed subalgebra must be four dimensional; otherwise $t$ fixes an eight dimensional subalgebra and is the identity map.
\end{proof}

This gives us a partial characterization of our inner automorphisms of order $2$, and gives us two \emph{types} of involutions on $\Aut(C)$.  We say an involution is of \emph{type I} if it fixes a quaternion subalgebra of $C$ and of \emph{type II} if it fixes a totally singular four dimensional subalgebra.  

It is also interesting to note that the totally singular four dimensional subalgebras $B$ are Clifford algebras since all elements in $B$ are orthogonal to each other. For $b \in B$, the Cayley equation simplifies to
\[
b^2 + \langle b, e \rangle b + \q(b)e = 0 \Rightarrow b^2 = \q(b).
\]

\section{Involutions fixing a quaternion subalgebra}
\label{typeI}

It was shown in \cite{sv00} that any element $g\in \Aut(C)$ that leaves a quaternion subalgebra $D \subset C$ invariant is of the form
\begin{equation} \label{invD}
g(x+yw) = cxc^{-1} + (p c y c^{-1} )w, 
\end{equation}
where $p$ and $c \in D$, $w \in D^{\perp}$, $\q(w) \neq 0$, $\q(p)=1$, and $\q(c) \neq 0$.  If $t \in \Aut(C)^D$ such that $t^2 = \id$ then
\begin{equation}\label{fixD}
t(x+yw) = x + (ry) w, 
\end{equation}
where $r^2 = e$ and $\q(r)=1$ since $t$ must act as the identity on $D$.

\begin{lem} \label{isoquat}
Let $C$ be an octonion algebra defined over $k$ a field of characteristic $2$.  If $t_1,t_2 \in \Aut(C)$ are of order $2$ and type $1$ then $t_1 \cong t_2$ if and only if there exists $g\in \Aut(C)$ such that $g^{-1}t_1g = t_2$, $g:D_2 \to D_1$ where $D_i = C^{t_i}$, and $g(r_2) = r_1$ (when $r_i$ and $t_i$ are as in \ref{fixD}).
\end{lem}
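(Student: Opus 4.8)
The statement has two directions, of which the reverse implication is immediate: if some $g \in \Aut(C)$ satisfies $g^{-1}t_1 g = t_2$, then by definition $t_1$ and $t_2$ lie in the same $\Aut(C)$-conjugacy class, i.e. $t_1 \cong t_2$, and the auxiliary conditions $g(D_2)=D_1$ and $g(r_2)=r_1$ play no role. So the whole content is the forward implication, and the plan is to begin with an \emph{arbitrary} conjugator and then upgrade it to one that also matches the fixed data $(D_i,r_i)$.

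First I would record the fixed-point normalization. Assume $t_1 \cong t_2$, so there is $g \in \Aut(C)$ with $t_1 = g\,t_2\,g^{-1}$. Since $D_i = C^{t_i}$, the one-line computation $C^{t_1} = C^{g t_2 g^{-1}} = g(C^{t_2})$ gives $g(D_2)=D_1$; because every element of $\Aut(C)$ is an isometry of $\q$, it also sends orthogonal complements to orthogonal complements, so $g(D_2^{\perp})=D_1^{\perp}$. Thus $g$ restricts to an algebra isomorphism $D_2 \to D_1$ and carries the decomposition $C = D_2 \oplus D_2^{\perp}$ onto $C = D_1 \oplus D_1^{\perp}$, which already secures the second condition.

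Next I would extract the relation on the multipliers $r_i$. Fix $w_i \in D_i^{\perp}$ with $\q(w_i)\neq 0$ as in \eqref{fixD}, so that $t_i$ acts on $D_i^{\perp}$ by $t_i(w_i)=r_i w_i$; since any two elements of $C$ generate an associative subalgebra, one recovers $r_i = t_i(w_i)\,w_i^{-1}$. Writing $g(w_2)=c\,w_1$ with $c\in D_1$ and applying $g$ to $t_2(w_2)=r_2 w_2$ together with the intertwining identity $g\,t_2 = t_1\,g$, the multiplication rules in $C = D_1 \oplus D_1^{\perp}$ show that $g(r_2)$ is the inner conjugate $c^{-1}r_1 c$. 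Hence an arbitrary conjugator already sends $r_2$ into $D_1$ to an element conjugate, in the associative algebra $D_1$, to $r_1$ (both satisfying $r_i^2=e$, $\q(r_i)=1$, and $\bi{r_i,e}=0$ by \eqref{mineq}).

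Finally I would correct $g$ to obtain equality, by replacing it with $hg$ for a suitable $h$ in the centralizer $Z_{\Aut(C)}(t_1)$: since $h\,t_1=t_1\,h$, the product $hg$ still conjugates $t_1$ to $t_2$ and still maps $D_2$ onto $D_1$, while $hg(r_2)=h(c^{-1}r_1 c)$. By \eqref{invD} every automorphism leaving $D_1$ invariant—in particular every element of $Z_{\Aut(C)}(t_1)$—acts on $D_1$ as conjugation by an element of $D_1^{*}$, so the goal is to find such an $h$ whose action on $D_1$ undoes conjugation by $c$. The hard part is precisely this last step: the centralizer condition $h\,t_1=t_1\,h$ restricts \emph{which} inner automorphisms of $D_1$ are realized inside $Z_{\Aut(C)}(t_1)$, and in characteristic $2$ the norms on the quadratic subalgebra $ke+kr_1$ centralizing $r_1$ are all squares, so one must verify that $c$ can be taken in the correct norm class. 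Here I would invoke the structure of $\Aut(C)^{t_1}$ together with the fact, established in Section~\ref{typeI}, that type I involutions occur only when $D_1$ is split; splitness makes $\q$ isotropic and supplies the norm values needed to carry out the correction, producing $h$ with $hg(r_2)=r_1$. The automorphism $g':=hg$ then satisfies all three conditions.
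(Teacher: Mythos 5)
Your reverse direction and the normalization $g(D_2)=D_1$ agree with the paper, and your computation that an arbitrary conjugator satisfies $g(r_2)=c^{-1}r_1c$, where $g(w_2)=cw_1$, is correct --- indeed it is more careful than the paper's own display, which tacitly uses $g(w_2)$ itself as the doubling vector for $D_1$, so that $g(r_2)=r_1$ holds on the nose. The point you isolated is real: in \ref{fixD} the multiplier $r$ is attached to a chosen $w\in D^{\perp}$, and replacing $w$ by $dw$ replaces $r$ by $d^{-1}rd$, so $r_1$ is well defined only up to conjugacy by $D_1^{*}$. The paper resolves the mismatch by (implicitly) adapting the choice of $w_1$, not by modifying $g$ --- no correction step occurs in its proof.

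Your final correction step, however, is a genuine gap, and in fact it fails. By the paper's own analysis (the unlabelled proposition before \ref{fixpt1} together with the proof of \ref{fixpt1}), an element $h$ of the centralizer of $t_1$ leaves $D_1$ invariant, has the form \ref{invD} with data $(a,p)$, and is constrained by $pa\in Z_{D_1}(r_1)=ke\oplus kr_1$; since $\q(\alpha e+\beta r_1)=\alpha^2+\beta^2=(\alpha+\beta)^2$ and $\q(p)=1$, this forces $\q(a)\in k^{*2}$ (this is exactly the condition $a_0^2=\q(c)$ appearing in \ref{fixpt1}). On the other hand, $h\big(c^{-1}r_1c\big)=r_1$ requires $ca^{-1}\in Z_{D_1}(r_1)$, hence $\q(a)\in \q(c)\,k^{*2}$. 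So the $h$ you need exists if and only if $\q(c)=\q(w_2)/\q(w_1)$ is a square in $k^{*}$. Because the split quaternion norm is universal, one can fix $w_1,w_2$ with nonsquare norm ratio, and then \emph{no} conjugator whatsoever carries your fixed $r_2$ to your fixed $r_1$: all conjugators form the coset $Z_{\Aut(C)}(t_1)g$, whose action on $D_1$ runs only through $\mathcal{I}_a$ with $\q(a)$ a square. Splitness supplies all norm values in $D_1$, but not in $Z_{D_1}(r_1)$, which is the relevant constraint, so your appeal to isotropy of $\q$ cannot close the gap. The correct repair is the paper's: read $g(r_2)=r_1$ with the decompositions chosen compatibly (take $w_1=g(w_2)$; equivalently, assert that $g(r_2)$ is $D_1$-conjugate to $r_1$). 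Note also that the paper's final paragraph proves the stronger converse actually used in \ref{class}, namely that $g(D_2)=D_1$ and $g(r_2)=r_1$ alone force $gt_2=t_1g$; your reading of the converse as vacuous discards this content.
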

\begin{proof}
We take $D_1$ and $D_2$ to be the quaternion subalgebras of $C$ fixed by $t_1$ and $t_2$ respectively.  Let $t_1 \cong t_2 \in \Aut(C)$ have order $2$ then there exists $g \in \Aut(C)$ such that $g^{-1} t_1 g = t_2$.  If we let $y \in D_2$, then 
\[ t_2(y) = g^{-1} t_1 g (y). \]
Now $g:C \to C$ is an automorphism, so $g(D_2) \cong D_2 \subset C$.  Let $g(D_2) = D$ and $g(y) = x \in D$, then 
\begin{align*}
t_1g(y) &=  g t_2(y)\\
t_1g(y) &= g(y).
\end{align*}
This tells us $D \subset D_1$.  Since $g\in \Aut(C)$, it must be true that $g(D_2)=D_1$.

If we consider the equation
\[ t_1g(x'+y'w) = gt_2(x'+y'w), \]
we see that
\[ g(x') + r_1g(y')g(w) = g(x') + g(r_2)g(y')g(w), \]
for all $x',y' \in D_2$ and we have $g(r_2) = r_1$.

Let $t_1, t_2 \in \Aut(C)$ such that $t_1^2 = t_2^2 = \id$ and $t_1, t_2 \neq \id$.  We denote by $D_1$ and $D_2$ the quaternion algebras fixed by $t_1$ and $t_2$ respectively.  Choosing $x',y' \in D_2$ so we have $g(x'), g(y') \in D_1$ and $w \in D_2^{\perp}$ such that $\q(w) \neq 0$ we see that
\[ gt_2(x'+y'w) = g(x') + \big(g(y')g(r_2)\big)g(w), \]
and
\[ t_1g(x'+y'w) = g(x') + \big(g(y') r_1g(w) \big). \]
Restricting to the case that $x'=0, y'=e$ and $g(r_2)=r_1$. Noting that $w$ is invertible we see $gt_2 = t_1g$ for all $x'+y'w \in C$.
\end{proof}

\begin{prop} \label{class}
If $t_1, t_2 \in \Aut(C)$ are of order $2$and type $1$ with $D_1, D_2 \subset C$ their respective quaternion fixed point subalgebras, then the following statements are equivalent;
\begin{enumerate}[$(1)$]
\item $t_1 \cong t_2$ 
\item $\mathcal{I}_{t_1} \cong \mathcal{I}_{t_2}$ 
\item There exists $g \in \Aut(C)$ such that $g(D_2)= D_1$ and $g(r_2) = r_1$.
\end{enumerate}
\end{prop}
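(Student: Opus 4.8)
The plan is to prove the three statements equivalent by splitting off $(1)\Leftrightarrow(3)$, which is essentially \ref{isoquat}, from $(1)\Leftrightarrow(2)$, which is where the group-theoretic input enters. I read $\mathcal{I}_{t_i}$ as the involutorial automorphism of $\Aut(C)$ given by conjugation by $t_i$ (so that $\Aut(C)^{\mathcal{I}_{t_i}}$ is its fixed point group), and $\mathcal{I}_{t_1}\cong\mathcal{I}_{t_2}$ as meaning the two are carried to one another by an automorphism of $\Aut(C)$. Accordingly I would organize the argument as the two chains $(1)\Leftrightarrow(3)$ and $(1)\Leftrightarrow(2)$, and then assemble them.

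For $(1)\Rightarrow(3)$, if $t_1\cong t_2$ then by \ref{isoquat} the conjugating element $g\in\Aut(C)$ already satisfies $g(D_2)=D_1$ (as $g$ is an automorphism carrying $D_2$ into $D_1$) and $g(r_2)=r_1$, which is exactly $(3)$. For $(3)\Rightarrow(1)$, given $g$ with $g(D_2)=D_1$ and $g(r_2)=r_1$, I would reuse the final computation of \ref{isoquat}: writing a general element of $C$ as $x'+y'w$ with $x',y'\in D_2$ and $w\in D_2^{\perp}$ invertible, and specializing to $x'=0$, $y'=e$, one obtains $gt_2=t_1g$, hence $t_1\cong t_2$.

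The implication $(1)\Rightarrow(2)$ is formal. If $g^{-1}t_1g=t_2$, then conjugation by $g$ intertwines the two involutions, $\mathcal{I}_{t_2}=\mathrm{Int}(g)^{-1}\,\mathcal{I}_{t_1}\,\mathrm{Int}(g)$, so $\mathcal{I}_{t_1}\cong\mathcal{I}_{t_2}$; moreover $\mathrm{Int}(g)$ carries $\Aut(C)^{\mathcal{I}_{t_2}}$ isomorphically onto $\Aut(C)^{\mathcal{I}_{t_1}}$, so conjugate involutions yield isomorphic fixed point data.

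The substance, and the step I expect to be the main obstacle, is $(2)\Rightarrow(1)$: I must lift an isomorphism of the involutions back to genuine conjugacy of the elements $t_1,t_2$ themselves. Here I would use that $\Aut(C)$ is the simple group of type $\G_2$, which is simultaneously simply connected and adjoint, so its center is trivial and $t\mapsto\mathrm{Int}(t)$ is injective; moreover its Dynkin diagram has no symmetry and, in characteristic $2$, $\G_2$ carries no exceptional isogeny (the special isogeny of $\G_2$ occurs only in characteristic $3$), so $\Out\big(\Aut(C)\big)$ is trivial and every automorphism of $\Aut(C)$ is inner. Thus an automorphism sending $\mathcal{I}_{t_1}$ to $\mathcal{I}_{t_2}$ has the form $\mathrm{Int}(g)$, giving $\mathrm{Int}(t_2)=\mathrm{Int}(g^{-1}t_1g)$, and injectivity of $\mathrm{Int}$ forces $t_2=g^{-1}t_1g$, which is $(1)$. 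The delicate point is ensuring the isomorphism asserted in $(2)$ is realized inside $\Aut(C)$ rather than being merely an abstract group isomorphism; over non-perfect $k$ one must rule out a contribution from field automorphisms, and it is cleanest to phrase $(2)$ as conjugacy of the involutions within $\Aut\big(\Aut(C)\big)$ so that $\Out(\G_2)=1$ applies directly. Combining $(1)\Leftrightarrow(3)$ with $(1)\Leftrightarrow(2)$ then closes the equivalence.
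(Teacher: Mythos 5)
Your skeleton coincides with the paper's: $(1)\Leftrightarrow(3)$ is delegated to \ref{isoquat} exactly as the paper does, and $(1)\Rightarrow(2)$ is the same formal conjugation computation. The one divergence is $(2)\Rightarrow(1)$. The paper reads $\mathcal{I}_{t_1}\cong\mathcal{I}_{t_2}$ as conjugacy by an \emph{inner} automorphism, i.e.\ $\mathcal{I}_{g}\mathcal{I}_{t_1}\mathcal{I}_{g}^{-1}=\mathcal{I}_{t_2}$ for some $g\in\Aut(C)$, and then the whole step is the one-line observation that $t_2gt_1g^{-1}\in Z(\Aut(C))=\{\id\}$, so $gt_1g^{-1}=t_2$; no statement about outer automorphisms is needed or used. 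You instead allow an arbitrary automorphism of $\Aut(C)$ in $(2)$ and reduce to the inner case by asserting that $\Out(\Aut(C))$ is trivial. That would buy a formally stronger equivalence, but the blanket claim ``every automorphism of $\Aut(C)$ is inner'' is false at the level of abstract groups over most fields relevant to this paper: by Steinberg/Borel--Tits, abstract automorphisms of $G_2(k)$ are generated by inner and field automorphisms, and field automorphisms are genuinely outer whenever $k$ admits them (e.g.\ $k=\mathbb{F}_{2^n}$ with $n\geq 2$, or the rational function fields $\mathbb{F}(\chi_1)$ that furnish the paper's non-perfect examples). Your claim is correct only in the category of algebraic $k$-group automorphisms, where adjointness and the absence of diagram symmetries give $\Aut(G_2)=G_2$; note also that your proposed repair --- phrasing $(2)$ inside $\Aut\big(\Aut(C)\big)$ --- points in the wrong direction, since the abstract automorphism group is precisely where the field automorphisms live.

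You did flag this as the delicate point, which is to your credit, but the clean resolution is the paper's: define $\cong$ in $(2)$ to mean conjugacy by $\mathcal{I}_g$ with $g\in\Aut(C)$, after which your injectivity-of-$\mathrm{Int}$ argument (trivial center) \emph{is} the paper's proof verbatim and the entire $\Out$ discussion can be deleted. (As an aside, the conclusion would in fact survive the abstract reading in the type I case, since a field automorphism carries an involution fixing a split quaternion subalgebra to another such, and these form a single $\Aut(C)$-conjugacy class --- but that invokes the later classification, which is not available at this point in the paper and is not how the proposition is meant.)
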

\begin{proof}
Notice that 1 implies 2 is concluded almost immediately by noticing that if $gt_1g^{-1} = t_2$, then $\mathcal{I}_{g} \mathcal{I}_{t_1} \mathcal{I}_{g}^{-1} = \mathcal{I}_{t_2}$.  We can arrive at the fact that 2 implies 1 by noticing the center of $\Aut(C)$ is trivial, therefore any inner automorphism of order $2$ is induced by an element of order $2$ in $\Aut(C)$. Then we see that when  $\mathcal{I}_{g} \mathcal{I}_{t_1} \mathcal{I}_{g}^{-1} = \mathcal{I}_{t_2}$ we have
\[ \mathcal{I}_{g} \mathcal{I}_{t_1} \mathcal{I}_{g}^{-1}(h) = \mathcal{I}_{t_2}(h), \]
for all $h \in \Aut(C)$.  This puts $t_2gt_1g^{-1} \in Z(\Aut(C)) = \{\id\}$, and it follows that $gt_1g^{-1} = t_2$.  The results from \ref{isoquat} gives us (1) if and only if (3).  
\end{proof}

In the case $C^t \subset C$ is a split quaternion subalgebra, all $r \in C^t$ such that $r^2 = e$, and $\q(r) =1$ are isomorphic as shown in the following results.  We show in \ref{quatdivfix} that there are no elements of order $2$ in $\Aut(C)$ fixing a division quaternion algebra, and so \ref{class} (3) can be shown to depend on the isomorphism class of the fixed quaternion subalgebra of $C$ only.

\begin{lem}\label{order2mat}
If $R^2 = \id \in \GL_2(k)$ then 
\[ R = \begin{bmatrix} R_{11} & R_{12} \\
					(R_{11}^2+1)R_{12}^{-1} & R_{11}
					\end{bmatrix}, \]
up to conjugation in $\GL_2(k)$.
\end{lem}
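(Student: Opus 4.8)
The plan is to avoid any structure theory and simply translate $R^2 = \id$ into equations on the entries, exploiting characteristic $2$ throughout. Writing $R = \begin{bmatrix} a & b \\ c & d \end{bmatrix}$ and expanding, the condition $R^2 = \id$ is equivalent to the four scalar equations $a^2 + bc = 1$, $b(a+d) = 0$, $c(a+d) = 0$, and $d^2 + bc = 1$.

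First I would compare the first and last equations to get $a^2 = d^2$. Since $a^2 = d^2$ means $(a+d)^2 = 0$ in characteristic $2$, this forces $a = d$; in particular $R_{11} = R_{22}$, and the two middle equations (which now read $b \cdot 0 = 0$ and $c \cdot 0 = 0$) hold automatically. The single surviving relation is $bc = a^2 + 1$. If $b = R_{12} \neq 0$, I then solve $c = (a^2+1)b^{-1}$, which is exactly $R_{21} = (R_{11}^2 + 1)R_{12}^{-1}$, so $R$ is already in the displayed form with no conjugation required.

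The one place where ``up to conjugation'' is genuinely needed is the degenerate case $R_{12} = 0$, and I expect this to be the only subtlety. If $b = 0$ then $bc = 0 = a^2 + 1$ forces $a = 1$, so $R = \begin{bmatrix} 1 & 0 \\ c & 1 \end{bmatrix}$ with $c \neq 0$ (the case $c = 0$ being $R = \id$). Conjugating by the swap matrix $\begin{bmatrix} 0 & 1 \\ 1 & 0 \end{bmatrix}$ interchanges the off-diagonal entries and produces $\begin{bmatrix} 1 & c \\ 0 & 1 \end{bmatrix}$, which has the required shape with $R_{11} = 1$, $R_{12} = c$, and indeed $(R_{11}^2+1)R_{12}^{-1} = 0 = R_{21}$. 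After this normalization $R_{12}$ is always nonzero, so the inverse appearing in the formula is legitimate. Beyond disposing of this case, the only thing to watch is the characteristic-$2$ bookkeeping, such as $a + d = 2a = 0$ once $a = d$.
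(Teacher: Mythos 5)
Your proof is correct and takes essentially the same approach as the paper's: translate $R^2 = \id$ into equations on the entries and use conjugation by $\left[\begin{smallmatrix} 0 & 1 \\ 1 & 0 \end{smallmatrix}\right]$ to dispose of the case where the upper-right entry vanishes. If anything you are more explicit than the paper (which compresses everything into ``straightforward computation''), spelling out the four scalar equations, deriving $R_{11}=R_{22}$ from $(a+d)^2=0$, and flagging the degenerate case $R=\id$, which both arguments must implicitly exclude since the displayed form requires $R_{12}$ to be invertible.
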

\begin{proof}
By straight forward computation $R^2 = \id$ gives us one of the following forms 
\[ \begin{bmatrix} R_{11} & R_{12} \\
					(R_{11}^2+1)R_{12}^{-1} & R_{11}
					\end{bmatrix} \text{ or }
\begin{bmatrix} R_{11} & (R_{11}^2+1)R_{12}^{-1}  \\
					R_{12} & R_{11}
					\end{bmatrix}, \]
which are in the same conjugacy class through conjugation by $\left[\begin{smallmatrix} 0 & 1 \\
									1 & 0 \\
									\end{smallmatrix}\right]$.
\end{proof}

\begin{thm}
Let $D$ be a split subalgebra of $C$ and $r \in D$ with $r^2=e$ then
\[ r \cong \begin{bmatrix}
			1 & 1 \\
			0 & 1 \\
			\end{bmatrix} \in \Mat_2(k). \]
\end{thm}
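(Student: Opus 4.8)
The plan is to use that a split quaternion subalgebra is nothing but a matrix algebra, and then to reduce the claim to a short Jordan-form computation that is special to characteristic $2$. Since $D$ is split, I would fix an isomorphism $D \cong \Mat_2(k)$ under which the identity $e$ becomes the identity matrix, the norm $\q$ becomes the determinant, and the trace form $\bi{\,\cdot\,,e}$ becomes the matrix trace; this is the standard model of the split quaternion algebra. Under this identification $r$ is a $2\times 2$ matrix with $r^2 = e$, and I would immediately note that we must assume $r \neq e$, since $r = e$ corresponds to the trivial map $t = \id$ of \ref{fixD}, which is not an involution (and $r = e = I$ is central, hence not conjugate to the asserted Jordan block).

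First I would show that $r$ is a nontrivial unipotent element. Working in characteristic $2$, set $N = r + e$. Then
\[
N^2 = (r+e)^2 = r^2 + re + er + e^2 = r^2 + e = 0,
\]
since $re + er = 2r = 0$ and $r^2 = e = e^2$. Thus $N$ is nilpotent with $N^2 = 0$, and $N \neq 0$ precisely because $r \neq e$. Equivalently, one can invoke \ref{order2mat}: the explicit matrix given there has trace $2R_{11} = 0$ and determinant $1$, hence characteristic polynomial $x^2 + 1 = (x+1)^2$, so its only eigenvalue is $1$ and $r$ is unipotent.

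Next I would bring $N$ to Jordan form. A nonzero $2\times 2$ matrix with $N^2 = 0$ has rank $1$, so $\ker N$ is one-dimensional; choosing any $v_2 \notin \ker N$ and setting $v_1 = N v_2 \neq 0$ produces a basis $\{v_1, v_2\}$ of $k^2$ in which $N v_2 = v_1$ and $N v_1 = 0$, i.e.\ $N$ is represented by $\left[\begin{smallmatrix} 0 & 1 \\ 0 & 0 \end{smallmatrix}\right]$. Conjugating $r = e + N$ by the corresponding change-of-basis matrix $P = [\,v_1 \mid v_2\,] \in \GL_2(k)$ then yields
\[
r \cong I + \begin{bmatrix} 0 & 1 \\ 0 & 0 \end{bmatrix} = \begin{bmatrix} 1 & 1 \\ 0 & 1 \end{bmatrix},
\]
as claimed.

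The only real subtlety, and the place to be careful, is that this is a genuinely characteristic-$2$ phenomenon: over a field of characteristic not $2$ the relation $r^2 = e$ would force $r$ to be diagonalizable with eigenvalues $\pm 1$, whereas here the two eigenvalues collapse to $1$, and the nontriviality $r \neq e$ forces the single Jordan block rather than the scalar $I$. I would also remark that every nonzero square-zero nilpotent is $\GL_2(k)$-conjugate to $\left[\begin{smallmatrix} 0 & 1 \\ 0 & 0 \end{smallmatrix}\right]$ over an arbitrary field, so no hypothesis on $k$ beyond $\ch(k) = 2$ is needed to complete the argument.
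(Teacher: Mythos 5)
Your proof is correct, but it follows a genuinely different route from the paper. The paper proceeds by explicit parametrization: its Lemma \ref{order2mat} computes the general shape of a matrix $R$ with $R^2 = \id$ (necessarily with nonzero off-diagonal entry $R_{12}$), and the proof then exhibits a concrete conjugating matrix $P = \left[\begin{smallmatrix} 1 & 0 \\ R_{11}+1 & R_{12} \end{smallmatrix}\right]$ with $\mathcal{I}_P(R) = \left[\begin{smallmatrix} 1 & 1 \\ 0 & 1 \end{smallmatrix}\right]$. You instead make the structural observation that $N = r + e$ satisfies $N^2 = 0$ in characteristic $2$ and run the standard cyclic-vector argument to put $N$ in Jordan form. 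Each approach has its merits: the paper's computation produces an explicit conjugator in terms of the entries of $R$, which can be convenient downstream; your argument is basis-free, makes the characteristic-$2$ phenomenon transparent (the eigenvalues $\pm 1$ collapse to a single unipotent class), and works uniformly over any field without the implicit case analysis on which off-diagonal entry is nonzero. You are also right to flag, and correct, a small imprecision in the statement: the hypothesis $r \neq e$ is needed (the paper's proof excludes $r = e$ only tacitly, through the requirement $R_{12} \neq 0$ inherited from Lemma \ref{order2mat}), and your reduction $N \neq 0 \iff r \neq e$ handles this cleanly. One minor point worth making explicit if you wanted full rigor: conjugacy in $\GL_2(k)$ really is the right notion of isomorphism here, since by Skolem--Noether every automorphism of $\Mat_2(k)$ is inner, so your change-of-basis conjugation realizes the claimed isomorphism of elements of the split quaternion algebra.
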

\begin{proof}
Recall that when $D$ is a split quaternion algebra $D \cong \Mat_2(k)$.  Let $r$ correspond to $R \in \Mat_2(k)$.  Then take $P \in \GL_2(k)$ and compute $\mathcal{I}_P(R)$ where $R^2=\id$.  Then $R$ is of the form in \ref{order2mat} where $R_{12} \neq 0$.  Now we see that taking
\[ P= \begin{bmatrix}
		1 & 0 \\
		R_{11}+1 & R_{12}
		\end{bmatrix}, \]
we have $\mathcal{I}_P(R) = \left[\begin{smallmatrix} 1 & 1 \\
									0 & 1 \\
									\end{smallmatrix}\right]$.
\end{proof}

\begin{cor} \label{sl2conj}
 If $r_1, r_2 \in \SL_2(k)$, $r_1^2=r_2^2=\id$ with $\ch(k)=2$, then $r_1$ is conjugate to $r_2$.
 \end{cor}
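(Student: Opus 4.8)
The plan is to reduce the statement to the theorem immediately preceding it, which already pins down the conjugacy class of every nontrivial order-two element of a split quaternion algebra. Since $\SL_2(k) \subset \Mat_2(k)$ and $\Mat_2(k)$ is precisely the split quaternion algebra with norm given by the determinant, any $r_i \in \SL_2(k)$ with $r_i^2 = \id$ is a norm-one element of $\Mat_2(k)$ squaring to the identity, so it meets the hypotheses of that theorem. Assuming $r_i \neq \id$ (the case of interest, since $r = \id$ yields the trivial map in \ref{fixD}), the theorem gives $r_i \cong \left[\begin{smallmatrix} 1 & 1 \\ 0 & 1 \end{smallmatrix}\right]$ for $i = 1,2$, and then, conjugacy being transitive, I would conclude $r_1 \cong r_2$ in two lines.

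The one point I would check is that the hypotheses of the preceding theorem genuinely apply, i.e. that the determinant-one condition is compatible with (indeed forced by) the normal form in \ref{order2mat}. First I would recall that, up to conjugation in $\GL_2(k)$, $R = \left[\begin{smallmatrix} R_{11} & R_{12} \\ (R_{11}^2+1)R_{12}^{-1} & R_{11} \end{smallmatrix}\right]$ with $R_{12} \neq 0$. A short computation in characteristic $2$,
\[ \det R = R_{11}^2 + R_{12}(R_{11}^2+1)R_{12}^{-1} = R_{11}^2 + R_{11}^2 + 1 = 1, \]
shows that every such $R$ already lies in $\SL_2(k)$; hence the $\SL_2$ hypothesis in the corollary contributes nothing beyond $r_i^2 = \id$ and $r_i \neq \id$, and the theorem applies verbatim.

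It is worth recording the conceptual reason for this rigidity, which also furnishes a self-contained alternative argument. In characteristic $2$ one has $(r + \id)^2 = r^2 + \id$, so $r^2 = \id$ forces $N := r + \id$ to satisfy $N^2 = 0$. For a $2 \times 2$ matrix this leaves only two possibilities: $N = 0$, giving $r = \id$, or $N$ a rank-one nilpotent, all of which are conjugate to $\left[\begin{smallmatrix} 0 & 1 \\ 0 & 0 \end{smallmatrix}\right]$. Thus every $r \neq \id$ with $r^2 = \id$ is conjugate to $\id + \left[\begin{smallmatrix} 0 & 1 \\ 0 & 0 \end{smallmatrix}\right] = \left[\begin{smallmatrix} 1 & 1 \\ 0 & 1 \end{smallmatrix}\right]$, recovering both the theorem and the corollary at once.

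I do not expect a serious obstacle here, as this is a corollary of the preceding theorem; the only thing to watch is the degenerate case $r = \id$, for which the statement is literally false (the identity is central, hence conjugate only to itself). The intended reading must therefore exclude it, and making that restriction explicit — or rephrasing the result as ``there is a single conjugacy class of nontrivial order-two elements in $\SL_2(k)$'' — is the main clarification I would make before committing the two-line transitivity argument.
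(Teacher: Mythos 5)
Your reduction to the preceding theorem via transitivity is exactly how the paper treats this corollary: it is stated without separate proof as an immediate consequence of the normal form $\left[\begin{smallmatrix}1&1\\0&1\end{smallmatrix}\right]$, and your determinant computation $\det R = R_{11}^2 + (R_{11}^2+1) = 1$ simply makes explicit what the paper leaves implicit (that the $\SL_2$ hypothesis is automatic). Your caveat about $r=\id$ is also well taken but consistent with the paper's reading, since the normal form in \ref{order2mat} requires $R_{12}\neq 0$ and hence never produces the identity, so nontrivial involutions are tacitly assumed throughout.
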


When considering $\Aut(C)$ over a field of characteristic $2$, where $k^2 \neq k$, we still need to consider quaternion subalgebras of $C$ as the result from \ref{class} still holds.  

When $k$ is not perfect and $t$ fixes a split quaternion subalgebra of a split octonion algebra, the conjugacy class of $t$ only depends on the isomorphism class of the quaternion subalgebra $D$ fixed by $t$.

If $D$ is a quaternion algebra it has a basis of the form
\[ \{ e, u, v, uv \}, \]
where $u^2 = u+ \alpha e$, $v^2 = \beta e$ and $vu = uv + v$ with $\alpha \in k$ and $\beta \in G_*(k)$ see \cite{dr83}.  When this is the case our quadratic form is as follows
\[ \q(c) = c_0^2 + c_0c_1 + \alpha c_1^2 + \beta( c_2^2 + c_2c_3 + \alpha c_3^2), \]
which implies that $q \sim  [1,\alpha] \perp \beta[1,\alpha].$
We are interested in maps of the form $t(x+yw) = x+(ry)w$ where $\q(r)=1$, $r^2=e$ and $r \in C^t$.  If we consider 
\[ r = \rho_0 e + \rho_1u + \rho_2 v + \rho_3 uv, \]
and $r^2 = e$ using the basis above we see that
\[ e = (\rho_0^2 + \alpha \rho_1^2 + \beta \rho_2^2 + \beta \rho_2\rho_3 + \alpha \beta \rho_3^2)e + \rho_1^2u + \rho_1\rho_2 v + \rho_1\rho_3 uv, \]
and so $\rho_1=0$ and $\rho_0^2 + \beta (\rho_2^2 +  \rho_2\rho_3 + \alpha \rho_3^2)=1$.  Notice that this is the norm of $r$ and $\q(r) = 1$, which must be true since $\ch(k) = 2$ and $\q(r)^2 = 1$. 

\begin{prop}\label{noquatdiv}
If there exists $r\in D$ a quaternion algebra such that $r \neq e$ and $r^2 =e$, then $D$ is split. 
\end{prop}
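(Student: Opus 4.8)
The plan is to extract from $r$ a nonzero nilpotent element of $D$, which immediately rules out $D$ being a division algebra; since a four dimensional composition algebra is either split or a division algebra, this forces $D$ to be split. First I would set $s = r + e$. Because $r \neq e$ the element $s$ is nonzero, and because $e$ is the identity of $D$ and $\ch(k) = 2$ the two cross terms in the square cancel, giving
\[
s^2 = (r+e)^2 = r^2 + re + er + e^2 = r^2 + e^2 = e + e = 0.
\]
Thus $s$ is a nonzero element of $D$ with $s^2 = 0$.

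Next I would conclude. A division algebra contains no nonzero nilpotents, so the existence of $s$ shows that $D$ is not a division algebra. As every quaternion algebra is a composition algebra, and a composition algebra is either split or division, it follows that $D$ is split. I would emphasize that this argument uses only $r \neq e$ and $r^2 = e$, so it needs neither the explicit basis nor the coordinate relation $\rho_0^2 + \beta(\rho_2^2 + \rho_2\rho_3 + \alpha\rho_3^2) = 1$ derived just before the statement.

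If instead one prefers to phrase the result through the norm form, the same element $s$ does the job. Writing $s = (\rho_0 + 1)e + \rho_2 v + \rho_3 uv$ (recall $\rho_1 = 0$) and substituting the relation just mentioned gives
\[
\q(s) = (\rho_0 + 1)^2 + \beta(\rho_2^2 + \rho_2\rho_3 + \alpha\rho_3^2) = (\rho_0^2 + 1) + (1 + \rho_0^2) = 0,
\]
so $s$ is a nonzero isotropic vector and the norm form $[1,\alpha]\perp\beta[1,\alpha]$ is isotropic; a quaternion algebra with isotropic norm is split. The only genuinely delicate point, and hence the main thing I would pin down, is the split/division dichotomy together with the fact that a division composition algebra has anisotropic norm (equivalently, no zero divisors). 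Everything else is a one-line computation special to characteristic $2$, where $r + e$ squares to zero precisely because $r^2 = e^2 = e$.
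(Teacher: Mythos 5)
Your proposal is correct, and your primary argument is a genuinely cleaner route than the paper's. The paper works in coordinates: it uses the basis $\{e,u,v,uv\}$, the previously derived relations $\rho_1 = 0$ and $\rho_0^2 + \beta(\rho_2^2 + \rho_2\rho_3 + \alpha\rho_3^2) = 1$, and rewrites that relation as $0 = (1+\rho_0)^2 + \beta(\rho_2^2 + \rho_2\rho_3 + \alpha\rho_3^2) = \q(e+r)$, concluding that $r+e$ is a nontrivial zero divisor. Your second, norm-form phrasing is exactly this computation, so you have in effect reproduced the paper's proof as a fallback. But your first argument---$s = r+e$ satisfies $s^2 = r^2 + re + er + e^2 = e + e = 0$ since $e$ is the identity and $\ch(k)=2$---reaches the same witness $r+e$ without any basis, without the coordinate relation, and indeed without the quadratic form at all; it works verbatim in any composition algebra, whereas the paper's computation is tied to the specific quaternion basis set up just before the statement. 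Both proofs ultimately rest on the same (implicit in the paper) dichotomy from \cite{sv00}: a quaternion algebra whose norm is isotropic, equivalently which has a zero divisor, is split, and is otherwise division. You correctly flag this dichotomy as the one nontrivial input; note also that your two arguments are linked by the minimum equation \eqref{mineq}, since $\langle r,e\rangle = \rho_1 = 0$ gives $\langle s,e\rangle = 0$ and hence $s^2 = \q(s)e$, so nilpotency of $s$ and isotropy of $\q$ at $s$ are the same condition here.
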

\begin{proof}
If $r \in D$ and $r^2 = e$, then 
\[ \q(r) = \rho_0^2 + \beta(\rho_2^2 + \rho_2\rho_3 + \alpha \rho_3^2), \]
and $\q(r) = 1$.  If $\q(r) = 1$, then
\begin{align*}
1  &= \rho_0^2 + \beta(\rho_2^2 + \rho_2\rho_3 + \alpha \rho_3^2) \\
0 &= (1+\rho_0)^2 + \beta(\rho_2^2 + \rho_2\rho_3 + \alpha \rho_3^2).
\end{align*}
This tells us that $\q(e+r) = 0$, which tells us $r+e$ is a nontrivial zero divisor when $r \neq e$.
\end{proof}

\begin{cor} \label{quatdivfix}
There are no automorphisms of order $2$ fixing a quaternion division algebra.
\end{cor}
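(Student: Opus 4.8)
The plan is to read this off directly from Proposition~\ref{noquatdiv} together with the explicit description of type~I involutions recorded in~\ref{fixD}. Recall that an order-$2$ automorphism fixing a quaternion subalgebra $D$ elementwise is, by definition, an element of $\Aut(C)^D$ whose square is the identity; by~\ref{fixD} every such $t$ acts by $t(x+yw) = x + (ry)w$ for $x,y\in D$ and $w\in D^{\perp}$ with $\q(w)\neq 0$, where the element $r\in D$ satisfies $r^2 = e$ and $\q(r)=1$.

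First I would pin down when such a $t$ is nontrivial. The map $t$ is the identity exactly when $r=e$: if $r=e$ then $t(x+yw)=x+yw$ for all $x,y$, whereas if $r\neq e$ then already $t(w)=rw\neq w$. Thus an automorphism of order exactly $2$ fixing $D$ elementwise produces an element $r\in D$ with $r\neq e$ and $r^2=e$.

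Next I would invoke Proposition~\ref{noquatdiv} in contrapositive form. That proposition asserts that the mere existence of an $r\in D$ with $r\neq e$ and $r^2=e$ forces $D$ to be split. Hence if $D$ is instead a division quaternion algebra, no such $r$ can exist, and therefore the only $t\in\Aut(C)^D$ with $t^2=\id$ is the identity. This is precisely the claim that there are no involutions fixing a division quaternion subalgebra.

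Since all of the substantive work is already carried out in~\ref{fixD} and Proposition~\ref{noquatdiv}, I do not anticipate a genuine obstacle here; the corollary is a bookkeeping consequence of the two. The only point meriting a moment's care is the equivalence between $t$ being nontrivial and $r\neq e$, so that the triviality of the induced map is not conflated with the unavailability of a valid parameter $r$. Once that equivalence is fixed, the contradiction with Proposition~\ref{noquatdiv} is immediate and the statement follows.
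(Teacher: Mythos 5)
Your proposal is correct and matches the paper's argument: the corollary is stated as an immediate consequence of Proposition~\ref{noquatdiv}, since a nontrivial type~I involution in the form~\ref{fixD} forces an element $r \in D$ with $r \neq e$, $r^2 = e$, which that proposition shows can only exist when $D$ is split. Your extra care in distinguishing ``$t$ nontrivial'' from ``$r \neq e$'' is exactly the right bookkeeping point and is consistent with the paper's (implicit) reasoning.
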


In the case that $C$ is a division algebra and $t\in \Aut(C)$ such that $t^2 = \id$, $t$ must fix a totally singular subalgebra.  In which cases the $\Aut(C)$-conjugacy class of $\Aut(C)^{\mathcal{I}_t}$ depends solely on the conjugacy class of the totally singular subalgebra.  So \ref{class} can be restated as the following.

\begin{lem} \label{class2}
If $t_1, t_2 \in \Aut(C)$ are of order $2$ with $D_1=C^{t_1}, D_2=C^{t_2}$, then the following statements are equivalent;
\begin{enumerate}[$(1)$]
\item $t_1 \cong t_2$ 
\item $\mathcal{I}_{t_1} \cong \mathcal{I}_{t_2}$ 
\item $D_2 \cong D_1$.
\end{enumerate}
\end{lem}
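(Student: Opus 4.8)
The plan is to mirror the architecture of \ref{class}, adapting each implication to the totally singular setting. The preliminary observation is that since $C$ is a division algebra, every four dimensional subalgebra is itself a division algebra; in particular every quaternion subalgebra is division, so by \ref{quatdivfix} no order-$2$ automorphism fixes a quaternion subalgebra. Hence by \ref{mainlem2} each $t_i$ is of type II and $D_i = C^{t_i}$ is a four dimensional totally singular subalgebra. This is precisely the feature that allows the auxiliary condition $g(r_2) = r_1$ appearing in \ref{class} to be discarded, leaving only the requirement $D_2 \cong D_1$.

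First I would prove (1) $\Leftrightarrow$ (2) verbatim as in \ref{class}. Since $Z(\Aut(C)) = \{\id\}$, a relation $g t_1 g^{-1} = t_2$ immediately gives $\mathcal{I}_g \mathcal{I}_{t_1} \mathcal{I}_g^{-1} = \mathcal{I}_{t_2}$; conversely $\mathcal{I}_g \mathcal{I}_{t_1} \mathcal{I}_g^{-1} = \mathcal{I}_{t_2}$ forces $g t_1 g^{-1} t_2 \in Z(\Aut(C)) = \{\id\}$ (using $t_2^2 = \id$ to write $t_2^{-1} = t_2$), hence $g t_1 g^{-1} = t_2$. For (1) $\Rightarrow$ (3), if $g t_1 g^{-1} = t_2$ then for every $x \in D_1 = C^{t_1}$ one has $t_2\big(g(x)\big) = g t_1 g^{-1} g(x) = g(x)$, so $g(D_1) \subseteq D_2$; comparing dimensions and using that $g$ is an automorphism yields $g(D_1) = D_2$, so $g$ restricts to a $k$-algebra isomorphism $D_1 \cong D_2$.

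The substantive direction is (3) $\Rightarrow$ (1). Starting from a $k$-algebra isomorphism $\phi : D_1 \to D_2$, I would first extend $\phi$ to an automorphism $g \in \Aut(C)$ with $g|_{D_1} = \phi$: realizing $C$ from $B = D_1$ by adjoining, as in \ref{basis}, an element $w$ with $\q(w) \neq 0$ via the Cayley--Dickson construction, one matches $w$ against a corresponding element on the $D_2$ side and transports the construction across $\phi$. Then $g t_1 g^{-1}$ is an order-$2$ automorphism fixing $D_2 = g(D_1)$ elementwise, and it remains to identify it with $t_2$. This last step is where I expect the real obstacle: in the quaternion case two involutions sharing a fixed algebra need not coincide (whence the $r$-matching clause in \ref{class}), so I must show that in the division, type II situation an order-$2$ automorphism is rigidly determined by its fixed totally singular subalgebra — or at worst that $t_2$ and $g t_1 g^{-1}$ differ only by conjugation by an element of $\Aut(C)^{D_2}$, which can then be absorbed into $g$. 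Establishing this rigidity, namely that the totally singular fixed algebra carries all the conjugacy data with no residual twisting parameter, is the crux; it is exactly the input furnished by the analysis of type II involutions via the set $\widehat{B}$ in \ref{tinduce}.
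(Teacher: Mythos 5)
Your implications $(1) \Leftrightarrow (2)$ and $(1) \Rightarrow (3)$ are fine and match the paper's argument for \ref{class}; note, however, that the paper gives no separate proof of \ref{class2} at all --- it is asserted as a ``restatement'' of \ref{class}, justified only by the preceding remark that in the division case the $\Aut(C)$-conjugacy class of the \emph{fixed point group} $\Aut(C)^{\mathcal{I}_t}$ depends solely on the class of the fixed subalgebra. That assertion is strictly weaker than conjugacy of the involutions themselves, and the distinction is exactly where your plan breaks down.

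The genuine gap is your crux step $(3) \Rightarrow (1)$, and the problem is not that the rigidity you need is hard to establish --- it is false, by the paper's own later results. By \ref{tinduce} every involution fixing $B$ elementwise has the form $u \mapsto u + b$ with $b \in \widehat{B}$, $b \neq 0$; by \ref{typeIIiso}, two such maps $u \mapsto u+b$ and $u \mapsto u+b'$ are $\Aut(C)$-conjugate iff some $g \in \Aut(C,B)$ satisfies $g(b) = (e+n)b'$; and when $B$ is a division algebra, \ref{gidcor} forces $g|_B = \id$ and $n = 0$, so $b = b'$ (this is \ref{autCconjugate}). The example over $k = \mathbb{F}_2(\chi_1,\chi_2)$ exhibits an element of $\widehat{B}$ other than $0$ and $e$, so there genuinely exist non-conjugate involutions with the \emph{identical} fixed algebra, contradicting $(3) \Rightarrow (1)$. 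Both of your proposed rescues fail concretely: the maps $u \mapsto u+e$ and $u \mapsto u+b$ fix the same $B$ elementwise, killing rigidity; and conjugating by $\Aut(C)^{D_2} = \Aut(C)^{B} \cong \widehat{B}$ accomplishes nothing, since this group is abelian and contains the involutions in question, so conjugation by its elements leaves them unchanged. (Your secondary step, extending $\phi: D_1 \to D_2$ to $g \in \Aut(C)$, is also glossed: in the totally singular case the decomposition is $C = B \oplus Bu$ with $\langle u,e \rangle = 1$, not the $\q(w) \neq 0$ doubling you invoke.) The defensible version of \ref{class2} in the division case replaces $(1)$ and $(2)$ by conjugacy of the centralizers $\Aut(C)^{\mathcal{I}_{t_i}}$ --- all isomorphic to $\widehat{B}$ by \ref{divBfixBhat} --- which is what the surrounding text and \ref{classesBsummary}$(2)$ actually support; as literally stated, the equivalence you were asked to prove conflicts with \ref{autCconjugate}, and no proof along your lines (or any other) can close that gap.
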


It is known, see \cite{sv00}, that all quaternion and octonion algebras are split when taken over a perfect field of characteristic $2$.  This follows from the fact that $k^2=k$, and by noticing that for any choice of a quadratic form there exists a nonzero element $x \in D \subset C$ such that $\q(x)=0$.  So when $k$ is a finite field or algebraically closed we see that the results follow directly from our results above.  Further there are no involutions fixing division quaternion algebras.

\begin{cor}\label{fixD}
If $D$ is a split quaternion subalgebras of C over a field of characteristic $2$ there is only one isomorphism class of inner $k$-involutions fixing $D$.
\end{cor}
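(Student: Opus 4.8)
The plan is to reduce the statement, via the classification criterion \ref{class}, to a conjugacy question for involutions inside $\SL_2(k)$, which is already settled by \ref{sl2conj}. First note that the class is non-empty: since $D$ is split, $D \cong \Mat_2(k)$, and the unipotent element corresponding to $\left[\begin{smallmatrix} 1 & 1 \\ 0 & 1 \end{smallmatrix}\right]$ satisfies $r^2 = e$ and $\q(r) = 1$, so the recipe $t(x+yw) = x + (ry)w$ produces a type I involution fixing $D$. By the equivalence $(1) \Leftrightarrow (2)$ of \ref{class}, classifying inner involutions fixing $D$ up to isomorphism is the same as classifying the involutions $t$ themselves; and by $(1) \Leftrightarrow (3)$, two such $t_1, t_2$, with associated $r_1, r_2$, are isomorphic exactly when some $g \in \Aut(C)$ satisfies $g(D) = D$ and $g(r_2) = r_1$.

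So it suffices to produce, for any two admissible elements $r_1, r_2 \in D$ (that is, $r_i \neq e$, $r_i^2 = e$, $\q(r_i) = 1$), an automorphism of $C$ preserving $D$ and sending $r_2 \mapsto r_1$. Here I would invoke the parametrization \ref{invD}: every $g \in \Aut(C)$ leaving $D$ invariant restricts on $D$ to the inner automorphism $x \mapsto cxc^{-1}$ for some $c \in D$ with $\q(c) \neq 0$, and conversely (taking $p = e$ in \ref{invD}) every such inner automorphism of $D$ extends to an element of $\Aut(C)$ fixing $D$ setwise. The problem therefore collapses to the $4$-dimensional claim that any two admissible $r_1, r_2$ are conjugate by an invertible element of $D$.

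Finally I would pass to matrices. Under $D \cong \Mat_2(k)$ the norm $\q$ is the determinant and $\langle\,\cdot\,, e\rangle$ the trace, so the conditions $r_i^2 = e$, $\q(r_i) = 1$ translate into $R_i^2 = \id$ and $\det R_i = 1$; that is, $R_i \in \SL_2(k)$ with $R_i^2 = \id$ (and $R_i \neq \id$, so the central case is excluded). By \ref{sl2conj} these are conjugate in $\SL_2(k)$, hence in $\GL_2(k)$, and the conjugating matrix is precisely the $c$ needed above. Running it back through \ref{invD} gives the desired $g$, and \ref{class} then yields $t_1 \cong t_2$, so there is a single isomorphism class.

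The step requiring the most care is the bookkeeping between the two pictures: one must check that the automorphisms of $C$ preserving $D$ induce on $D$ exactly its group of inner automorphisms, so that the matrix conjugacy furnished by \ref{sl2conj} is genuinely realized by an element of $\Aut(C)$. This is exactly the content of \ref{invD} once the factor $p$ in $D^{\perp}$ is set to $e$; granting that identification, the rest is the routine norm-equals-determinant dictionary for split quaternions together with a direct appeal to \ref{sl2conj}.
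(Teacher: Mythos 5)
Your proof is correct and is essentially the paper's own argument: the paper likewise deduces the corollary from \ref{class}, with the required conjugacy of the elements $r$ supplied by \ref{order2mat} and \ref{sl2conj} and the realization inside $\Aut(C)$ coming from the parametrization \ref{invD}. You have simply made explicit the bookkeeping (non-emptiness via the standard unipotent, the $p=e$ extension step, and the norm-equals-determinant dictionary) that the paper's one-line proof leaves to the surrounding results.
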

\begin{proof}
By \ref{class} we know that the classes of $k$-involutions of $\Aut(C)$ are in bijection with the isomorphism classes of quaternion subalgebras of $C$.
\end{proof}

\begin{prop}
If $g \in \Aut(C)$ and $t\in \Aut(C)$ such that $t^2 = \id$ is of the form described in \ref{fixD}, then $g \in \Aut(C)^{\mathcal{I}_t}$ if and only if $g$ leaves $D$ invariant and $rpc = pcr$ .
\end{prop}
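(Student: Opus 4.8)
The plan is to recognize membership in $\Aut(C)^{\mathcal{I}_t}$ as the condition $\mathcal{I}_t(g) = g$, i.e. $tgt^{-1} = g$, so that lying in this fixed point group is exactly the assertion that $g$ commutes with $t$. With this reformulation the proposition becomes the statement that $gt = tg$ holds if and only if $g$ leaves $D$ invariant and $rpc = pcr$. I would prove the two implications separately, treating the forward (``only if'') direction as the one carrying the real structural content.

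For the ``if'' direction, assume $g$ leaves $D$ invariant, so that $g$ is of the form \eqref{invD}, $g(x + yw) = cxc^{-1} + (pcyc^{-1})w$ with $p,c \in D$, while $t$ has the form \eqref{fixD}, $t(x+yw) = x + (ry)w$ with $r \in D$. Since $g$ preserves both $D$ and $D^{\perp} = Dw$ with the same $w$, I can compose the two maps directly on a general element $x+yw$. The $D$-components of $tg(x+yw)$ and of $gt(x+yw)$ both equal $cxc^{-1}$, so they never obstruct commuting. Comparing $Dw$-components, $tg$ produces $r(pcyc^{-1}) = rpcyc^{-1}$ while $gt$ produces $pc(ry)c^{-1} = pcryc^{-1}$, where I use that the quaternion algebra $D$ is associative. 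These agree for all $y \in D$ precisely when $rpc = pcr$ (necessity by taking $y=e$ after right-multiplying by $c$, sufficiency by associativity). This simultaneously proves the ``if'' direction and shows that, once $g$ is known to preserve $D$, commuting with $t$ is equivalent to $rpc = pcr$.

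For the ``only if'' direction I must show that commuting with $t$ forces $g$ to leave $D$ invariant; this is the main obstacle, since a priori $g$ need not respect the decomposition $C = D \oplus Dw$ at all. My plan is to exploit that elementwise-fixed subalgebras transform equivariantly under conjugation: if $gt = tg$, then for any $z$ with $t(z)=z$ we have $t(g(z)) = g(t(z)) = g(z)$, so $g$ maps the fixed subalgebra of $t$ into itself, and by invertibility it maps it onto itself. Invoking \ref{mainlem1}, the subalgebra fixed elementwise by the involution $t$ is the quaternion subalgebra $D$, so $g(D) = D$, i.e. $g$ leaves $D$ invariant. Once this is established, $g$ necessarily has the form \eqref{invD} and the computation of the ``if'' direction applies verbatim to yield $rpc = pcr$, closing the equivalence. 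The delicate point to nail down is precisely this identification of the $t$-fixed data with the single quaternion subalgebra $D$ used to write $t$ in the form \eqref{fixD}; here I would rely on the structural description in \ref{mainlem1} rather than on any eigenspace splitting, since in characteristic $2$ the involution $t$ provides no $(-1)$-eigenspace with which to separate $D$ from $D^{\perp}$.
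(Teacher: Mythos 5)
Your proposal is correct and takes essentially the same route as the paper: the same direct computation on the decomposition $C = D \oplus Dw$ comparing the $Dw$-components of $tg$ and $gt$ for the ``if'' direction, and the same equivariance observation for the ``only if'' direction (the paper obtains it by writing $g = tgt$ and setting $y = 0$, which is exactly your remark that $g$ preserves the elementwise-fixed subalgebra $C^t = D$). If anything, you make explicit a step the paper leaves implicit, namely that once $g(D) = D$ is known, comparing $Dw$-components (right-multiplying by $c$ and taking $y = e$) forces $rpc = pcr$.
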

\begin{proof}
Let $t^2 = \id$, then by \ref{class} we have $C^t = D \subset C$ is a quaternion subalgebra.  By \cite{sv00} we can decompose $C = D \oplus Dw$ where $\q(w) \neq 0$, $w \in D^{\perp}$.  So now we can write any element in $C$ in the form $x+yw$, where $x,y \in D$.  Let $g \in \Aut(C)$ such that $g(D) = D$, then by \cite{sv00}
\[ g(x+yw) = cxc^{-1} + (pcyc^{-1}) w, \]
where $\q(p)=1$, and
\[ t(x+ yw) =  x +  (ry)w, \]
for $\q(r) = 1$.  We can compute
\begin{align*}
gt(x+yw) &= g(x + (ry)w) \\
&= cxc^{-1} +  (pcryc^{-1}) w \\
&= cxc^{-1} +  (rpcyc^{-1}) w \\
&= tg(x+yw).
\end{align*}
for all $x,y \in D$, so $g \in \Aut(C)^{\mathcal{I}_t}$.  To finish the proof let us assume $g \in \Aut(C)^{\mathcal{I}_t}$.  Let $x,y \in D$ as above
\begin{align*}
g(x+yw) &= tgt(x+yw) \\
&= tg(x) + t(g(ry)g(w)) \\
g(x)+g(y)g(w) &= tg(x) + t(g(ry)g(w)).
\end{align*}
Letting $y=0$ we see that $g(x) = tg(x)$, so $g(x) \in C^t=D$ for all $x\in D$.  Since $g$ is a bijection, $g(D) = D$.
\end{proof}

Notice this is different from the analogous result when $\ch(k) \neq 2$ in that case we can take $r = - e$ and there are no commutativity issues.  When $\ch(k) \neq 2$ and $t$ fixes a split quaternion subalgebra of $C$ the fixed point group is of the form
\[ \Aut(C)^{\mathcal{I}_t} \cong \Aut(D) \ltimes \Sp(1), \]
where $\Sp(1) = \{ p \in C \ | \ \q(p)=1 \}$.  For split quaternion algebras over a given field there is only one isomorphism class.  The correspondence between quaternion subalgebras of $C$ and certain conjugacy classes of $k$-involutions still holds when $\ch(k) = 2$, but the fixed point group of $\mathcal{I}_t$ takes a different form.

\begin{thm}
\label{fixpt1}
For $k$ a field with $\ch(k) = 2$, when $t \in \Aut(C)$ is an involution of type I
\[ \Aut(C)^{\mathcal{I}_t} \cong \SL_2(k) \times G_+(k). \]
\end{thm}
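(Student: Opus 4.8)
The plan is to invoke the preceding proposition, which characterises $H := \Aut(C)^{\mathcal{I}_t}$ as the set of $g \in \Aut(C)$ leaving $D := C^t$ invariant and satisfying $r(pc) = (pc)r$, and then to make that parametrisation completely explicit. First I would record that $D$ is split: since $t \neq \id$ the element $r$ with $r^2 = e$, $\q(r) = 1$ is not $e$, so by \ref{noquatdiv} $D$ is split, $D \cong \Mat_2(k)$, the norm is the determinant, and the norm-one elements form $\SL_2(k)$. By \ref{invD} every $g$ leaving $D$ invariant has the form $g(x + yw) = cxc^{-1} + (pcyc^{-1})w$ with $\q(p) = 1$, $\q(c) \neq 0$; here conjugation by $c$ pins down $c$ only modulo the centre $k^{*}e$, while $p$ is recovered unambiguously from $g(w) = pw$ together with $\q(p) = 1$. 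Thus $g \leftrightarrow (p, \bar c) \in \SL_2(k) \times \PGL_2(k)$, and writing $m := pc$ (well defined up to $k^{*}$) the fixed-point condition becomes simply that $m$ centralises $r$.

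Next I would compute the centraliser. Since $r^{2} = e$ and $r \neq e$, in characteristic $2$ the element $N := r + e$ satisfies $N^{2} = 0$, so the centraliser of $r$ in $\Mat_2(k)$ is the commutative algebra $Z := ke + kN \cong k[\epsilon]/(\epsilon^{2})$, with unit group $Z^{*} = \{ ae + bN : a \in k^{*},\, b \in k\}$. Every class in $Z^{*}/k^{*}e$ has a unique representative $e + sN$ ($s \in k$), and since $(e + sN)(e + s'N) = e + (s + s')N$ these representatives form a subgroup $U \cong G_{+}(k)$ of $\SL_2(k)$, each $e + sN$ being unipotent and hence of determinant $1$. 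Consequently $g \mapsto (p, \bar m)$ is a bijection $H \xrightarrow{\ \sim\ } \SL_2(k) \times \bigl(Z^{*}/k^{*}e\bigr) \cong \SL_2(k) \times G_{+}(k)$ at the level of sets, and it only remains to check that the group law is that of the direct product.

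Finally I would compute composition. A direct calculation with \ref{invD} gives $(c_{1}, p_{1}) \circ (c_{2}, p_{2}) = (c_{1}c_{2},\, p_{1}c_{1}p_{2}c_{1}^{-1})$, from which $m = pc$ is multiplicative, so the $G_{+}(k)$-coordinate $\bar m$ is additive; substituting $c_{1} = p_{1}^{-1}m_{1}$ yields $p_{\mathrm{comp}} = (m_{1}p_{2}m_{1}^{-1})\,p_{1}$. Since $m_1$ acts on $\SL_2(k)$ by conjugation through its reduced representative $\sigma(\bar m_{1}) = e + sN \in U \subset \SL_2(k)$, this exhibits $H$ as a semidirect product $\SL_2(k) \rtimes G_{+}(k)$ in which $G_{+}(k)$ acts by \emph{inner} automorphisms via the homomorphism $\sigma \colon G_{+}(k) \to \SL_2(k)$. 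The main point, and the step I expect to be the only genuine obstacle, is that such a semidirect product is really a direct product: the subgroup $\{(\sigma(\mu), \mu)\}$ is a complement to $\SL_2(k)$ that centralises it, precisely because $\sigma$ is a homomorphism and the action is conjugation by $\sigma(\mu)$, whence $H \cong \SL_2(k) \times G_{+}(k)$. I would verify this untwisting explicitly, since it is exactly what distinguishes the characteristic-$2$ answer (a direct product, with the small unipotent group $U$ in place of all of $\Aut(D)$) from the semidirect product $\Aut(D) \ltimes \Sp(1)$ that occurs in odd characteristic.
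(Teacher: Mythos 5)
Your proposal is correct and follows essentially the same route as the paper: both parametrize $\Aut(C)^{\mathcal{I}_t}$ via \ref{invD} after using \ref{noquatdiv} to identify $D \cong \Mat_2(k)$ with $r$ unipotent, and both reduce the commutation condition to $pc$ centralizing $r$, with the centralizer $ke + k(r+e)$ taken modulo scalars supplying the $G_+(k)$ factor. The only difference is the final bookkeeping: the paper rescales to the coordinates $\bigl(a_0^{-1}c,\ a_0^{-1}pc\bigr) \in \SL_2(k) \times U$, in which the group law is componentwise immediately, whereas you keep the coordinates $(p, \bar{m})$ and then explicitly untwist the resulting inner semidirect product $\SL_2(k) \rtimes G_+(k)$ into a direct product --- an equivalent and correctly executed step, since your $m = pc$ is exactly the paper's normalized $P$.
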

\begin{proof}
Let $k$ be a field of characteristic $2$.  To construct an automorphism of order $2$ of $C$, we need only to choose an $r\in D$ with $r^2 = e$ and $\q(r) =1$.  Since there is only one isomorphism class of quaternion subalgebras in this case we have only one isomorphism class of automorphisms of order $2$.  Choose $D = \Mat_2(k)$, $t(x+yw) = x + (ry)w$ with
\[ r = \begin{bmatrix}
		1 & 1 \\
		0 & 1 \\
		\end{bmatrix}. \]
If we take $g \in \Aut(C)^{\mathcal{I}_t}$, $g$ must be of the form
\[ g_{(c,p)}(x+yw) = cxc^{-1} + (p c y c^{-1})w, \]
where $\q(c)\neq 0$, and $p \in \Sp(1)$.  Since $p$ is invertible $pc \in \GL_2(k)$.  Simple calculations show that if $pcr=rpc$, we have
\[ pc \in UT(k) = \left\{ \begin{bmatrix}
				a_0 & a_1 \\
				0 & a_0
				\end{bmatrix} \ \bigg| \ a_0^2 = \q(c), a_1 \in G_+(k) \right\}. \]
We now consider the map $g_{(c,p)} \mapsto g_{(c,P)}$ where $P = pc$ for some $p \in \Sp(1)$.  In this case $g$ takes the form
\begin{align*}
 g_{(c,P)}(x+yw) &= cxc^{-1} + \left(\begin{bmatrix}
				a_0 & a_1 \\
				0 & a_0
				\end{bmatrix}yc^{-1}\right)w \\
&= \left(a_0^{-1}c\right)x\left(a_0^{-1}c\right)^{-1} + \left(\begin{bmatrix}
				1 & a_0^{-1}a_1 \\
				0 & 1
				\end{bmatrix}y\left(a_0^{-1}c\right)^{-1}\right)w \\
&= g_{(a_0^{-1}c, a_0^{-1}P)}(x+yw),
\end{align*}
when $\det(c) = a_0^2 = \q(c)$.  We take
\[ \check{p} \in G_+(k) = \left\{\begin{bmatrix}1 & \check{a_1} \\
				0 & 1\end{bmatrix} \ \bigg| \ \check{a_1} \in k \right\}. \] 
Since $c\in \GL_2(k)$ we have $a_0^{-1} c \in \SL_2(k)$.  The surjective homomorphism 
\[ \Psi:\SL_2(k) \times G_+(k) \to \Aut(C)^{\mathcal{I}_t} \]
\[ \ \ \ \ \ \  \ \ \ \ \ \ (a_0^{-1}c,\check{p}) \mapsto g_{(a_0^{-1}c,\check{p})}, \]
has $\ker(\Psi) = Z(\SL_2(k)) \times \{ \id \}=\{ \id \} \times \{ \id \}$ since we are in a field of characteristic $2$, and we are left with 
\[ \Aut(C)^{\mathcal{I}_t} \cong \SL_2(k) \times G_+(k). \]
\end{proof}

\begin{thm}
If $C$ contains a split quaternion subalgebra there is one $\Aut(C)$-conjugacy class of involutions of type I, otherwise there are no involutions of type I.  Moreover, if $t$ is an involution of type I, then the fixed point group of $t$ is $\Aut(C)$-conjugate to $\SL_2(k) \times G_+(k)$.
\end{thm}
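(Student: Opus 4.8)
The plan is to separate the two alternatives of the statement and then read off the fixed point group from Theorem \ref{fixpt1}. First I would observe that every involution of type I must fix a \emph{split} quaternion subalgebra. Indeed, if $t$ is of type I it fixes $D = C^t$ elementwise and acts as $t(x+yw) = x + (ry)w$ for some $r \in D$ with $r^2 = e$ and $\q(r) = 1$; since $t \neq \id$ forces $r \neq e$, Proposition \ref{noquatdiv} (equivalently Corollary \ref{quatdivfix}) shows that $D$ cannot be a division algebra and is therefore split. Consequently, if $C$ contains no split quaternion subalgebra there are no involutions of type I, which establishes the second alternative.

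Now suppose $C$ contains a split quaternion subalgebra $D_0 \cong \Mat_2(k)$. For existence I would exhibit $t_0(x+yw) = x + (r_0 y)w$ with $r_0 = \left[\begin{smallmatrix} 1 & 1 \\ 0 & 1 \end{smallmatrix}\right] \in D_0$, noting that $r_0^2 = e$, $\q(r_0) = \det(r_0) = 1$, and $r_0 \neq e$ in characteristic $2$. For the single conjugacy class I would take two involutions $t_1, t_2$ of type I and verify condition $(3)$ of Proposition \ref{class}. By the observation above both $D_1 = C^{t_1}$ and $D_2 = C^{t_2}$ are split, hence isomorphic to $\Mat_2(k)$; the transitivity of $\Aut(C)$ on split quaternion subalgebras (an isomorphism of quaternion subalgebras extends to an automorphism of $C$, see \cite{sv00}) supplies $g_1 \in \Aut(C)$ with $g_1(D_2) = D_1$. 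It then remains to match $r_1$ with $g_1(r_2)$: these are order-two elements of $\SL_2(k) \subset D_1$, so Corollary \ref{sl2conj} makes them conjugate in $\SL_2(k)$, and since every invertible $c \in D_1$ induces an automorphism $x+yw \mapsto cxc^{-1} + (cyc^{-1})w$ of $C$ restricting to $x \mapsto cxc^{-1}$ on $D_1$ (the $p = e$ case of \eqref{invD}), I obtain $g_2 \in \Aut(C)$ fixing $D_1$ with $g_2 g_1(r_2) = r_1$. Then $g = g_2 g_1$ satisfies $g(D_2) = D_1$ and $g(r_2) = r_1$, so $t_1 \cong t_2$ by Proposition \ref{class}, and there is exactly one conjugacy class.

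Finally I would transport the fixed point group along this conjugacy. If $g t_1 g^{-1} = t_2$ then $\mathcal{I}_g \mathcal{I}_{t_1} \mathcal{I}_g^{-1} = \mathcal{I}_{t_2}$, and a direct check gives $g\,\Aut(C)^{\mathcal{I}_{t_1}} g^{-1} = \Aut(C)^{\mathcal{I}_{t_2}}$, so the fixed point groups of any two involutions of type I are $\Aut(C)$-conjugate. Taking the reference involution $t_0$, Theorem \ref{fixpt1} identifies $\Aut(C)^{\mathcal{I}_{t_0}} \cong \SL_2(k) \times G_+(k)$, and every type I fixed point group is $\Aut(C)$-conjugate to it. I expect the main obstacle to be precisely the transitivity of $\Aut(C)$ on split quaternion subalgebras used in the second paragraph: although this is standard in the theory of composition algebras, in characteristic $2$ one must ensure that the extension of a subalgebra isomorphism to all of $C$ respects the quadratic form, which is where the basis of Proposition \ref{basis} and the decomposition $C = D \oplus Dw$ from \cite{sv00} carry the argument.
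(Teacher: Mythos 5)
Your proposal is correct and follows essentially the same route as the paper: rule out division quaternion fixed algebras via Corollary \ref{quatdivfix}, reduce the single conjugacy class to the unique isomorphism class of split quaternion subalgebras via the equivalence in Proposition \ref{class}, and invoke Theorem \ref{fixpt1} for the fixed point group. The only difference is one of detail: where the paper simply cites Lemma \ref{class2}, you verify condition $(3)$ of Proposition \ref{class} directly --- transitivity of $\Aut(C)$ on split quaternion subalgebras from \cite{sv00}, Corollary \ref{sl2conj} to match $r_1$ with $g_1(r_2)$, and the $p=e$ case of \eqref{invD} to extend the conjugation --- which fills in a step the paper delegates to the discussion preceding Lemma \ref{class2}.
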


\begin{proof}
By \ref{class2} we now that $\Aut(C)$-conjugacy classes of involutions of type I are in bijection with $\Aut(C)$-conjugacy classes of fixed point groups of involutions of type I, and that the $\Aut(C)$-conjugacy classes of fixed point groups are in bijection with isomorphism classes of quaternion subalgebras of $C$.  And then finally we note that there is only one isomorphism class of split quaternion subalgebras and that there are no involutions of type I fixing a division quaternion subalgebra by \ref{quatdivfix}.  Finally \ref{fixpt1} gives us a representative of the $\Aut(C)$-conjugacy class of the fixed point group.
\end{proof}

In the next section we explore a similar characterization for four dimensional totally singular subalgebras of an octonion algebra and involutions of type II.

\section{Involutions fixing a four dimensional totally singular subalgebra}
\label{typeII}

Let $B$ be a four dimensional totally singular subalagebra of an octonion algebra $C$. In this section, we determine the conjugacy classes of automorphisms of order $2$ that fix $B$. From \ref{4Dsubalgebras}, we know that $B$ has basis $\{e, v, w, vw \}$ with
\[ v^2 = \beta e \text{ and }w^2 = \gamma e. \]
There exists some $u \in C \setminus B$ with $\bi{u,e} = 1$ and $\q(u) = \alpha \in k$. It follows that $\bi{v, vu} = \beta$, $\bi{w, wu} = \gamma$, and $\bi{vw, (vw)u} = \beta\gamma$, and the bilinear form of all other pairs of basis vectors is zero. The basis of $B$ can be extended to a basis $\{e, v, w, vw, u, vu, wu, (vw)u\}$ of $C$ by noting that $C = B \oplus Bu$. The quadratic form $\q$ on $B$ is a totally singular quasi-Pfister form: $\q \cong \llangle \beta, \gamma \rrangle = \langle 1, \beta, \gamma, \beta\gamma\rangle$, for some $\beta, \gamma \in G_*(k)$. For $x \in B$, we have
\[ 
\q(x) = x_0^2 + \beta x_1^2 + \gamma x_2^2 + \beta\gamma x_3^2. 
\]

The following Corollary is a direct result of \ref{noquatdiv}, \ref{mainlem1} and \ref{mainlem2}.  

\begin{cor}
When $C$ is a division algebra, all automorphisms of order $2$ fix 4 dimensional totally singular subalgebras $B$ of $C$. Moreover, $B$ is also a subfield of $C$.
\end{cor}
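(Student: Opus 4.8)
The plan is to run every order-two automorphism through the dichotomy set up just after \ref{mainlem2} and to use the division hypothesis to eliminate type I completely, leaving only type II; the field claim then follows from the Clifford structure already recorded for totally singular subalgebras. So let $t \in \Aut(C)$ with $t^2 = \id$ and $t \neq \id$. By the Proposition asserting $C^t \neq ke$, there is a fixed vector outside $ke$, and the first thing I would decide is whether \emph{any} fixed vector pairs nontrivially with $e$.

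Suppose $t$ fixes some $u$ with $\langle u,e\rangle \neq 0$. Since the form is alternating we have $\langle e,e\rangle = 0$, so such a $u$ automatically lies outside $ke$, and \ref{mainlem1} then produces a quaternion subalgebra $D \subseteq C^t$ fixed elementwise. Because $C$ is a division algebra, the subalgebra $D$ has no zero divisors either, so $D$ is a division quaternion algebra; but \ref{quatdivfix} forbids any order-two automorphism from fixing such a $D$ elementwise. This contradiction shows that no fixed vector pairs nontrivially with $e$. Hence I may pick $v \in C^t \setminus ke$, necessarily with $\langle v,e\rangle = 0$, and \ref{mainlem2} supplies a four-dimensional subalgebra $B \subseteq C^t$ fixed elementwise. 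This $B$ cannot be quaternion: a quaternion basis contains a vector with $\langle \cdot,e\rangle = 1$, which would be a $t$-fixed vector pairing nontrivially with $e$, contrary to what was just shown. Therefore $B$ is totally singular, which is the first assertion.

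For the \emph{moreover} clause I would use the remark following \ref{mainlem2}: on a totally singular $B$ the minimal equation collapses to $b^2 = \q(b)e$, and the vanishing of the bilinear form on $B$ (equivalently its Clifford structure) makes $B$ commutative. If some nonzero $b \in B$ had $\q(b) = 0$, then $b^2 = 0$ would give a zero divisor, impossible in the division algebra $C$; so $\q(b) \neq 0$ and $b^{-1} = \q(b)^{-1}b \in B$. Since $B$ is generated by the two elements $v,w$ and $C$ is alternative, Artin's theorem makes $B$ associative. Thus $B$ is a commutative, associative, unital $k$-algebra in which every nonzero element is invertible and which contains $ke$, i.e. a (purely inseparable) field extension of $k$ sitting inside $C$.

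I expect the main subtlety to be organizational rather than computational. The one genuine point to get right is that a division hypothesis on $C$ is exactly what lets \ref{quatdivfix} kill type I, since it forces every quaternion subalgebra to be division; and the one easily overlooked step is invoking associativity via Artin's theorem before declaring the totally singular $B$ to be a field, as neither commutativity nor invertibility alone guarantees this in a non-associative ambient algebra.
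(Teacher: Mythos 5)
Your proof is correct and follows exactly the route the paper intends: the paper states this corollary without proof as a ``direct result'' of \ref{noquatdiv} (via \ref{quatdivfix}), \ref{mainlem1}, and \ref{mainlem2}, which is precisely your elimination of type I by the division hypothesis followed by the type II dichotomy. Your treatment of the \emph{moreover} clause also matches the paper's later remark that division subalgebras of type $B$ are fields because they are commutative, with the welcome extra care of invoking Artin's theorem for associativity, a step the paper leaves implicit.
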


We use \ref{bar} to prove \ref{z} below.

\begin{lem}\label{bar}
If $g \in \Aut(C)$ and $x \in C$, then $\overline{g(x)} = g(\bar{x})$.
\end{lem}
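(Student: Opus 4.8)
The plan is to reduce the claim to the single fact that $g$ preserves the trace $\bi{x,e}$, and then extract that fact from the minimum equation \eqref{mineq}. Writing out the definition of conjugation, $\bar{x} = \bi{x,e}e + x$, and using that an algebra automorphism fixes the identity, $g(e)=e$, I obtain
\[
g(\bar{x}) = \bi{x,e}\,g(e) + g(x) = \bi{x,e}\,e + g(x),
\qquad
\overline{g(x)} = \bi{g(x),e}\,e + g(x).
\]
Thus the two sides agree precisely when $\bi{g(x),e} = \bi{x,e}$, so everything comes down to showing that automorphisms preserve the trace against $e$.

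For the trace, the idea is to apply $g$ to the minimum equation $x^2 + \bi{x,e}x + \q(x)e = 0$. Since $g$ is an algebra homomorphism fixing $e$, this yields $g(x)^2 + \bi{x,e}g(x) + \q(x)e = 0$. On the other hand, $g(x)$ satisfies its own minimum equation $g(x)^2 + \bi{g(x),e}g(x) + \q(g(x))e = 0$. Subtracting the two relations cancels the quadratic term and leaves
\[
\big(\bi{x,e} + \bi{g(x),e}\big)g(x) + \big(\q(x) + \q(g(x))\big)e = 0.
\]
When $x \notin ke$, injectivity of $g$ forces $g(x) \notin ke$, so $e$ and $g(x)$ are linearly independent and both coefficients must vanish; in particular $\bi{g(x),e} = \bi{x,e}$ (and, as a bonus, $\q(g(x)) = \q(x)$, recovering that $g$ is an isometry). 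When $x \in ke$ the trace $\bi{x,e}$ is zero because the bilinear form is alternating, and $g(x)$ again lies in $ke$, so the desired identity is immediate.

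The only step demanding any care is the linear-independence argument: I need that $g$ sends elements outside $ke$ to elements outside $ke$, which follows from injectivity once one notes $g(ke)=ke$. I do not expect a genuine obstacle here, since the entire argument rests on the structural identity \eqref{mineq} together with $g(e)=e$, and the remaining manipulations are purely formal.
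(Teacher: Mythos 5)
Your proof is correct, but it takes a genuinely different route from the paper's. The paper disposes of the lemma in one line by citing the standard fact from Springer and Veldkamp \cite{sv00} that every automorphism of a composition algebra is an isometry, hence preserves the bilinear form: then $\bi{x,e} = \bi{g(x),g(e)} = \bi{g(x),e}$ immediately, and $g(\bar{x}) = \bi{x,e}e + g(x) = \overline{g(x)}$ follows. You instead make the lemma self-contained by extracting trace preservation from the minimum equation \eqref{mineq}: applying $g$ to $x^2 + \bi{x,e}x + \q(x)e = 0$ and comparing with the minimum equation satisfied by $g(x)$ gives
\[
\big(\bi{x,e} + \bi{g(x),e}\big)g(x) + \big(\q(x) + \q(g(x))\big)e = 0,
\]
and your linear-independence step is sound: since $g$ is injective and $g(ke) = ke$, an $x \notin ke$ has $g(x) \notin ke$, so both coefficients vanish; the remaining case $x \in ke$ is handled correctly because the form is alternating in characteristic $2$ (the paper notes $\bi{x,x} = 0$), so $\bi{\lambda e, e} = 0$ and both sides of the identity reduce to $g(x)$. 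What your approach buys is independence from the external citation --- indeed you re-derive $\q(g(x)) = \q(x)$ as a byproduct for $x \notin ke$ (and trivially for $x = \lambda e$, where $\q(\lambda e) = \lambda^2$), recovering most of the isometry property the paper imports wholesale --- at the cost of a case split and the injectivity check. What the paper's approach buys is brevity: granted the isometry fact, the whole proof is a four-line computation with no cases. Both arguments are valid as written.
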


\begin{proof}
Since $g$ is an automorphism of $C$, $g$ is an isometry \cite{sv00}; thus $g$ preserves the bilinear form on $C$. For any $x \in C$, $\bar{x} = \bi{x,e}e + x$. Observe that
\begin{align*}
g(\bar{x}) &= g\big(\bi{x,e}e + x\big)\\
&= g\big(\bi{x,e}e\big) + g(x)\\
&= \big\langle g(x), g(e)\big\rangle g(e) + g(x)\\
&= \big\langle g(x), e\big\rangle e + g(x)\\
&= \overline{g(x)}.
\end{align*}
\end{proof}

\begin{prop}\label{z}
If $g \in \Aut(C)$, then $g(u) = u + m + nu$ for some $m + nu \in ke^{\perp}$, where $m \in B$ $n \in kv \oplus kw \oplus kvw$ and $\q(m+nu) = \langle m, u \rangle$.
\end{prop}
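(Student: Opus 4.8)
The plan is to exploit the two defining properties of an automorphism of a composition algebra: $g(e) = e$ and $g$ is an isometry for both $\q$ and $\langle\ ,\ \rangle$ (the latter underlying \ref{bar}). I would work throughout in the fixed basis $\{e, v, w, vw, u, vu, wu, (vw)u\}$ and use the bilinear table recorded in the setup, in which the only nonzero pairings among basis vectors are $\langle u,e\rangle = 1$, $\langle v, vu\rangle = \beta$, $\langle w, wu\rangle = \gamma$, and $\langle vw, (vw)u\rangle = \beta\gamma$.

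First I would locate $g(u)$ relative to $ke^{\perp}$. Since $g(e) = e$ and $g$ preserves the form, $\langle g(u), e\rangle = \langle g(u), g(e)\rangle = \langle u, e\rangle = 1$, so $\langle g(u) + u, e\rangle = 0$ and hence $g(u) + u \in ke^{\perp}$. The next step is to read off a basis of $ke^{\perp}$ from the bilinear table: $u$ is the unique basis vector with nonzero pairing against $e$, so $ke^{\perp}$ is spanned by the remaining seven vectors $\{e, v, w, vw, vu, wu, (vw)u\}$. Splitting this spanning set into the part lying in $B = \spa\{e,v,w,vw\}$ and the part lying in $\spa\{vu, wu, (vw)u\} = (kv\oplus kw\oplus kvw)u$, I can write $g(u) + u = m + nu$ with $m \in B$ and $n \in kv\oplus kw\oplus kvw$. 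This yields $g(u) = u + m + nu$ together with the membership $m + nu \in ke^{\perp}$.

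It then remains to verify the norm identity. Because $g$ is an isometry, $\q(u + m + nu) = \q(g(u)) = \q(u)$. Expanding the left side with the polarization rule $\q(x+y) = \q(x) + \q(y) + \langle x, y\rangle$ for $x = u$ and $y = m + nu$ gives $\q(u) + \q(m+nu) + \langle u, m+nu\rangle = \q(u)$, so in characteristic $2$ we obtain $\q(m+nu) = \langle u, m+nu\rangle$. Finally I would expand $\langle u, m+nu\rangle = \langle u, m\rangle + \langle u, nu\rangle$; the second term vanishes because $nu$ is a combination of $vu, wu, (vw)u$, none of which pairs nontrivially with $u$ in the table, leaving $\q(m+nu) = \langle m, u\rangle$ as claimed.

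The computations here are entirely linear-algebraic, so the only real care is bookkeeping with the bilinear table: specifically, confirming that $u$ is the sole basis vector meeting $e$ nontrivially (which pins down $ke^{\perp}$) and that $\langle vu, u\rangle = \langle wu, u\rangle = \langle (vw)u, u\rangle = 0$ (which kills the cross term in the norm expansion). Both facts are immediate from the stated orthogonality relations, so I expect no genuine obstacle beyond keeping the indices straight.
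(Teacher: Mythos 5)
Your proof is correct, and it is worth noting where it tracks the paper and where it diverges. The decomposition step is essentially the paper's in different clothing: the paper invokes Lemma \ref{bar}, comparing $\overline{g(u)}$ with $g(\bar{u})$ to force the coefficient of $u$ in $g(u)$ to be $1$, whereas you compute $\langle g(u), e\rangle = \langle u,e\rangle = 1$ directly from the isometry property and read the membership $g(u)+u \in ke^{\perp} = \spa\{e,v,w,vw,vu,wu,(vw)u\}$ off the bilinear table --- the same content, since \ref{bar} is just isometry plus $g(e)=e$ repackaged, but your version is cleaner and sidesteps a notational wrinkle in the paper (which writes $g(u) = u + m + nu$ at the outset yet concludes ``$n_0 = 1$,'' a conclusion that only parses for the parametrization $g(u) = m + nu$). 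The norm identity is where the routes genuinely differ: the paper applies $g$ to the minimal equation $u^2 = u + \alpha e$ and expands $(u+z)^2$ via \ref{mineq} and \ref{mineqlin}, i.e.\ it uses multiplicativity of $g$ and the Cayley-equation machinery, while you use only that $g$ preserves $\q$, polarizing $\q(u + z) = \q(g(u)) = \q(u)$ to get $\q(z) = \langle u, z\rangle$ and then killing $\langle u, nu\rangle$ term by term from the table. Your argument is shorter and operates purely at the quadratic-space level, needing less of the algebra structure; the paper's version exercises the minimal-equation identities it reuses elsewhere in the section. Both derivations land on $\q(m+nu) = \langle m,u\rangle = m_0$, and your bookkeeping (that $u$ is the sole basis vector pairing nontrivially with $e$, and that $vu$, $wu$, $(vw)u$ all pair to zero with $u$) is exactly right.
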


\begin{proof}
Suppose $g(u) = u + m + nu$, where $m,n \in B$. Let
\[ 
m = m_0e + m_1v + m_2w + m_3vw, \text{ and } \ n = n_0e + n_1v + n_2w + n_3vw
\]
On one hand, $\overline{g(u)} = n_0 e + g(u)$. On the other hand, $g(\bar{u}) = e + g(u)$. By \ref{bar}, $n_0 = 1$. It follows that $\bi{m + nu, e} = 0$, and hence $m + nu \in ke^{\perp}$.

Since $u^2 = u + \alpha e$, we have
\[
g(u)^2 = g(u^2) = g(u + \alpha e) = g(u) + \alpha e.
\]
Letting $z = m + nu$, we have
\begin{align*}
g(u)^2 &= g(u) + \alpha e \\
(u+z)^2 &= u+z + \alpha e \\
u^2 + uz + zu + z^2 &= u + z + \alpha e \\
u + \alpha e + uz + uz + \langle u,z \rangle e + \langle u,e \rangle z + \langle z,e \rangle u + z^2 &= u + z + \alpha e \\
u + \alpha e + \langle u,z \rangle e + z + \q(z)e &= u + z + \alpha e,
\end{align*}
by applying \ref{mineq} and \ref{mineqlin} to $z^2$ and $zu$.  Collecting terms, we have $\q(z) = \bi{u, z} = \bi{u, m+nu} = m_0 = \langle m,u \rangle$.
\end{proof}

Next we state some definitions and notation we use in the remainder of this section. Define the set 
\begin{equation}\label{bhatplus}
\widehat{C} = \{ x + yu \in ke^{\perp} \ | \ \q(x + yu) =   x_0 \},
\end{equation}
where $x, y \in B$, which is a groups under the product induced by composition of functions in $\Aut(C,B)$. We also define
\[
\widehat{B} = \widehat{C} \cap B,
\]
which is a group under addition.  We will often refer to
\[
\widetilde{B} = \{ b \in B \ | \ \langle b,u \rangle = 0 \ \} = kv \oplus kw \oplus kvw, \]
and to this end we define the projection map \  $\widetilde{ \ }: B \to \widetilde{B}$ to be defined by $\tilde{b} = b + \langle b,u \rangle e$. Notice that $y \in \widetilde{B}$ in \ref{bhatplus}.



We now consider isomorphism classes of $\widehat{B}$ for different classes of $B$.  Taking $\q(b) = \langle b,u \rangle = b_0$ we have
\[
b_0^2 + \beta b_1^2 + \gamma b_2^2 + \beta \gamma b_3^2 = b_0 \iff  \beta b_1^2 + \gamma b_2^2 + \beta \gamma b_3^2 = b_0^2 + b_0 \in \wp(k).
\]
In other words, elements in $\widehat{B}$ correspond to elements in $\wp(k) \cap \q(\widetilde{B})$.

\begin{prop}
When $B$ is a split totally singular four dimensional subalgebra of $C$, then 
\[
\widehat{B} \cong G_+(k^2) \times G_+(k) \times G_+(k).
\]
\end{prop}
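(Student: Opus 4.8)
The plan is to use the splitness hypothesis to turn the single defining equation of $\widehat{B}$ into the vanishing of a perfect square, and then to read off the claimed product simply as the projection onto three of the four coordinates. Recall from the discussion preceding the statement that $\widehat{B} = \widehat{C}\cap B$ consists of those $b = b_0e + b_1v + b_2w + b_3vw \in B$ satisfying $\q(b) = b_0$ (the condition $b \in ke^{\perp}$ being automatic, since $B$ is totally singular), and that $\widehat{B}$ is a group under addition. Because $B$ is split, its quasi-Pfister norm $\llangle \beta,\gamma\rrangle = \langle 1,\beta,\gamma,\beta\gamma\rangle$ is the split form $\langle 1,1,1,1\rangle$; by the scaling equivalence $\bi{a}\cong\bi{x^2a}$ this happens exactly when $\beta,\gamma \in k^2$, so I would write $\beta = s^2$ and $\gamma = t^2$ with $s,t \in k^*$. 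The gain is that
\[ \q(b) = b_0^2 + s^2b_1^2 + t^2b_2^2 + s^2t^2b_3^2 = (b_0 + sb_1 + tb_2 + stb_3)^2 = L(b)^2, \]
where $L(b) = b_0 + sb_1 + tb_2 + stb_3$ is a $k$-linear form on $B$.

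With this in hand the defining condition $\q(b) = b_0$ reads $L(b)^2 = b_0$; in particular $b_0 \in k^2$ for every $b \in \widehat{B}$, which is the source of the $G_+(k^2)$ factor. I would then define
\[ \phi : \widehat{B} \to G_+(k^2)\times G_+(k)\times G_+(k), \qquad \phi(b) = (b_0, b_2, b_3). \]
Since each coordinate is additive in $b$ and $k^2$ is closed under addition, $\phi$ is a homomorphism of additive groups with no computation required.

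The remaining work is to verify that $\phi$ is a bijection. For injectivity, if $b, b'$ have the same image then $b_0 = b_0'$, $b_2 = b_2'$, $b_3 = b_3'$, so $L(b)^2 = b_0 = b_0' = L(b')^2$; as squaring is injective on $k$ in characteristic $2$, $L(b) = L(b')$, and cancelling the equal coordinates leaves $s(b_1 + b_1') = 0$, hence $b_1 = b_1'$ because $s \neq 0$. For surjectivity, given $(p,q,r) \in k^2\times k\times k$ I would set $b_0 = p$, $b_2 = q$, $b_3 = r$ and solve $L(b) = \sqrt{p}$ for $b_1$, where $\sqrt{p}$ is the unique square root of $p \in k^2$; this equation is linear in $b_1$ with nonzero leading coefficient $s$, hence solvable, and the resulting $b$ satisfies $L(b)^2 = p = b_0$, so $b \in \widehat{B}$ and $\phi(b) = (p,q,r)$.

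The main obstacle, and really the only nonroutine point, is pinning down the meaning of ``split'' and justifying that it forces $\beta,\gamma \in k^2$; once $\q|_B$ is literally the square of a linear form, everything else is trivialized linear algebra. A secondary point worth flagging is that $b_1$ is the \emph{dependent} coordinate (recovered from $b_0,b_2,b_3$ through $L$), so the three free parameters are $b_0, b_2, b_3$; and although $G_+(k^2) \cong G_+(k)$ abstractly via squaring, recording the first factor as $G_+(k^2)$ is the honest bookkeeping, since $b_0$ is intrinsically a square.
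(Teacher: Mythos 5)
Your proof is correct and follows essentially the same route as the paper's: both exploit that splitness forces $\beta,\gamma\in k^2$, so that $\q|_B$ is the square of a linear form, whence $b_0\in k^2$ and one coordinate is determined by the other three, yielding the product $G_+(k^2)\times G_+(k)\times G_+(k)$. The only differences are cosmetic and in your favor --- the paper first normalizes the form to $\llangle 1,1\rrangle$ and treats $b_3$ as the dependent coordinate, asserting the isomorphism without writing out the bijection, whereas you keep $\beta=s^2$, $\gamma=t^2$ general, make $b_1$ dependent, and verify the homomorphism and bijectivity explicitly.
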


\begin{proof}
If $B$ has quadratic form isometric to $\llangle 1, 1 \rrangle$ then $B$  is split. We have the equation
\[
b_0 = b_0^2 + b_1^2 + b_2^2 + b_3^2 = (b_0 + b_1 + b_2 + b_3)^2. 
\]
This implies $b_0 \in k^2$, since $b_3$ depends on our choice of $b_0, b_1$, and $b_2$.  So there is an isomorphism of additive groups
\[ \widehat{B} \cong G_+(k^2) \times G_+(k) \times G_+(k). \]
\end{proof}

The additive group $G_+(k^2)$ is often denoted $\alpha_2(k)$ and is the additive group of squares in $k$ when $\ch(k) = 2$.  

\begin{cor}
When $k$ is perfect, $k^2 =k$ and $B$ is as above we have
\[ \widehat{B} \cong G_+(k) \times G_+(k) \times G_+(k). \]
\end{cor}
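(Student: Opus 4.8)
The plan is to deduce this directly from the preceding proposition together with the definition of a perfect field; there is essentially no new content beyond a single identification of additive groups. The preceding proposition shows that for a split totally singular four dimensional subalgebra $B$ one has $\widehat{B} \cong G_+(k^2) \times G_+(k) \times G_+(k)$, so the only step remaining is to replace the first factor $G_+(k^2)$ by $G_+(k)$.

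First I would recall the definition of a perfect field of characteristic $2$ from \ref{prereqs}: perfection means precisely that $k^2 = k$. This is an equality of subsets of $k$, and since the group operation on $G_+(k^2)$ is merely the restriction of addition in $k$, it immediately upgrades to an equality of additive groups $G_+(k^2) = G_+(k)$. I would also note that the hypothesis that $B$ be split is automatic in this setting: as recalled in \ref{4dalgs}, over a perfect field every octonion algebra is split, and hence so is each of its four dimensional totally singular subalgebras. Thus the preceding proposition applies to any $B$ of the required form without an extra splitness assumption.

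Substituting $G_+(k^2) = G_+(k)$ into the conclusion of the preceding proposition then yields $\widehat{B} \cong G_+(k) \times G_+(k) \times G_+(k)$, as claimed. I do not anticipate any genuine obstacle here: the entire argument reduces to the observation that perfection collapses $G_+(k^2)$ onto $G_+(k)$, with the splitness of $B$ being supplied for free by the perfection of $k$.
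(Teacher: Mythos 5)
Your proposal is correct and matches the paper's (implicit) argument: the corollary is an immediate consequence of the preceding proposition via the identification $G_+(k^2) = G_+(k)$ when $k^2 = k$, which is why the paper states it without proof. Your additional remark that splitness of $B$ is automatic over a perfect field is also consistent with the paper's discussion in \ref{4dalgs}.
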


As for the non-split four dimensional totally singular subalgebra $B$, we appeal to the theory of quadratic forms.  In these cases the quadratic forms are quasi-Pfister forms where
\[
q(z) = z_0^2 + \beta z_1^2 + \gamma z_2^2 + \beta\gamma z_3^2,
\]
and we write $q  = \llangle \beta, \gamma \rrangle = \langle 1, \beta, \gamma, \beta\gamma \rangle$.  The following result due to Hoffman and Laghribi can be found in \cite{hl04}; an equivalent result also appears in the discussion of totally singular quadratic forms in \cite{ekm08}.

\begin{prop}\label{hofflag}
There exists a natural bijection between anisotropic $n$-fold quasi-Pfister forms and purely inseparable extensions of $k^2$ of degree $2^n$ inside $k$ which is given by
\[ \llangle a_1,a_2, \ldots, a_n \rrangle \leftrightarrow k^2( a_1,a_2, \ldots, a_n). \]
\end{prop}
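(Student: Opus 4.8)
\section*{Proof proposal}

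The plan is to realize the bijection through the value set of a totally singular form, exploiting that in characteristic $2$ the Frobenius $x \mapsto x^2$ is an injective endomorphism of $k$ with image $k^2$. For a totally singular (quasi-linear) form $\q = \langle c_0, \ldots, c_m \rangle$ the associated bilinear form vanishes, so $\q$ is additive with $\q(\lambda x) = \lambda^2 \q(x)$, and its set of values is
\[
\q(V) = \Big\{ \textstyle\sum_i c_i x_i^2 : x_i \in k \Big\} = \sum_i c_i\, k^2,
\]
the $k^2$-subspace of $k$ spanned by the coefficients. Consequently $\q$ is anisotropic exactly when the $c_i$ are $k^2$-linearly independent, in which case $\q$ is injective and identifies $V$ with the $k^2$-subspace $\q(V) \subseteq k$; two anisotropic totally singular forms are therefore isometric if and only if they have the same value set inside $k$. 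This reduces the Proposition to a field-theoretic correspondence.

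Next I would identify the value set of the quasi-Pfister form. The $2^n$ diagonal entries of $\llangle a_1, \ldots, a_n \rrangle = \langle 1, a_1, a_2, a_1a_2, \ldots \rangle$ are the square-free monomials $\prod_{i \in S} a_i$ for $S \subseteq \{1, \ldots, n\}$. Since $a_i^2 \in k^2$ for each $i$, the $k^2$-span of these monomials is already closed under multiplication, so it coincides with the $k^2$-subalgebra $k^2(a_1, \ldots, a_n)$; that is, $\q(V) = k^2(a_1, \ldots, a_n) \subseteq k$. Combining this with the previous paragraph, $\llangle a_1, \ldots, a_n \rrangle$ is anisotropic if and only if the $2^n$ monomials are $k^2$-linearly independent, which says precisely that $[k^2(a_1, \ldots, a_n) : k^2] = 2^n$. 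As every element of $k$ squares into $k^2$, such a field is automatically a purely inseparable extension of $k^2$ (of exponent one) lying inside $k$. This yields a well-defined map sending an anisotropic form to its value set $k^2(a_1, \ldots, a_n)$, and the value-set criterion for isometry gives injectivity directly.

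The step I expect to be the real obstacle is surjectivity: given a purely inseparable extension $F$ with $k^2 \subseteq F \subseteq k$ and $[F : k^2] = 2^n$, I must exhibit generators $a_1, \ldots, a_n$ with $F = k^2(a_1, \ldots, a_n)$ whose square-free monomials form a $k^2$-basis, so that the resulting quasi-Pfister form is anisotropic and maps back to $F$. Because $F \subseteq k$ we have $F^2 \subseteq k^2$, so $F/k^2$ is of exponent one, and here I would invoke the structure theory of $p$-bases for exponent-one purely inseparable extensions: such an $F$ possesses a $2$-independent generating set $a_1, \ldots, a_n$ over $k^2$ for which the $2^n$ monomials $\prod_{i \in S} a_i$ constitute a $k^2$-basis. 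The delicate point is precisely the existence of such a square-free monomial basis of the correct cardinality; once it is in hand, anisotropy is immediate and the correspondence is a bijection, the remaining verifications (well-definedness under isometry and naturality) being the formal translation between value sets, $k^2$-independence, and field degrees already assembled above.
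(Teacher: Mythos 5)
Your proposal is correct, but there is no internal proof to compare it with: the paper does not prove Proposition \ref{hofflag} at all, stating it as a known result of Hoffmann and Laghribi \cite{hl04}, with an equivalent statement in \cite{ekm08}. What you have written is essentially a reconstruction of the standard argument from those sources: the value set $\q(V)=\sum_i c_i k^2$ of a quasilinear form is a $k^2$-subspace of $k$, anisotropy is exactly $k^2$-linear independence of the coefficients (every element of $k^2$ being a square), and for a quasi-Pfister form the value set is the $k^2$-span of the square-free monomials, which is closed under multiplication and hence equals $k^2(a_1,\dots,a_n)$ --- the one half-line you elide here is that this span, being a finite-dimensional $k^2$-subalgebra of the field $k$, is a domain finite over a field and therefore itself a field, so the subalgebra really is $k^2(a_1,\dots,a_n)$.

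Two points you leave as assertions deserve the short arguments that close them, and both go through. For ``isometric if and only if equal value sets'': an anisotropic quasilinear form is an additive, Frobenius-semilinear \emph{injection} $\q \colon V \to k$ (injective because $\q(x+y)=\q(x)+\q(y)$ and anisotropy kills the kernel), so if $\q_1(V_1)=\q_2(V_2)$ then $\sigma = \q_2^{-1}\circ \q_1$ is a bijection with $\q_2(\sigma v)=\q_1(v)$; additivity is immediate, and $k$-linearity follows since $\q_2(\sigma(\lambda v))=\lambda^2\q_1(v)=\q_2(\lambda\,\sigma v)$ and $\q_2$ is injective. The converse direction is trivial because the value set is an isometry invariant. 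As for the surjectivity step you single out as the ``real obstacle'': it is more elementary than an appeal to general $p$-basis theory suggests. Given $k^2\subseteq F\subseteq k$ with $[F:k^2]=2^n$, choose greedily $a_{i+1}\in F\setminus k^2(a_1,\dots,a_i)$; since $a_{i+1}^2\in F^2\subseteq k^2$, each extension step has degree exactly $2$, so the process terminates after exactly $n$ steps with $F=k^2(a_1,\dots,a_n)$, and multiplying out the tower bases $\{1,a_{i+1}\}$ exhibits the $2^n$ square-free monomials as a $k^2$-basis of $F$. That gives anisotropy of $\llangle a_1,\dots,a_n\rrangle$ with value set $F$, completing the bijection exactly as you outlined.
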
 

\begin{cor} \label{k2field}
Let $B$ be a totally singular four dimensional subalgebra and $\beta, \gamma \in G_*(k)$. Then
\begin{enumerate}[$(1)$]
\item \label{Bdiv} $[k^2(\beta,\gamma): k^2]=4$ if and only if $\llangle \beta, \gamma \rrangle$ defines a division algebra 
\item\label{Bint} $[k^2(\beta,\gamma): k^2]=2$ if and only if $\llangle \beta, \gamma \rrangle$ an algebra of indeterminate type (with a $2$ dimensional division subalgebra)
\item\label{Bspl} $[k^2(\beta,\gamma): k^2]=1$ if and only if $\llangle \beta, \gamma \rrangle$ defines a split algebra.
\end{enumerate}
\end{cor}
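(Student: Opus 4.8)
The plan is to reduce all three biconditionals to the single numerical invariant $[k^2(\beta,\gamma):k^2]$ and to exploit the fact that both sides of the corollary present a trichotomy. On the left, since $\beta^2,\gamma^2\in k^2$ the extension $k^2(\beta,\gamma)$ is purely inseparable of exponent one, generated by $\beta,\gamma$, so its degree lies in $\{1,2,4\}$ and the three conditions $=4$, $=2$, $=1$ are mutually exclusive and exhaustive. On the right, \emph{division}, \emph{split}, and \emph{indeterminate} likewise partition the totally singular four-dimensional subalgebras, with indeterminate being by definition the case that is neither division nor split. Because both are genuine partitions, it suffices to prove the three forward implications; the converses then come for free.

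First I would record the structural fact noted just after \ref{mainlem2}: every $b\in B$ satisfies $b^2=\q(b)e$, so $b$ is invertible precisely when $\q(b)\neq 0$. Hence $B$ is a division algebra if and only if $\q$ is anisotropic, i.e. if and only if $1,\beta,\gamma,\beta\gamma$ are linearly independent over $k^2$ (no nontrivial relation $b_0^2+\beta b_1^2+\gamma b_2^2+\beta\gamma b_3^2=0$). This independence is exactly the statement $[k^2(\beta,\gamma):k^2]=4$, which is also the content of the Hoffmann--Laghribi bijection \ref{hofflag} in the case $n=2$: anisotropic $2$-fold quasi-Pfister forms $\llangle\beta,\gamma\rrangle$ correspond to purely inseparable extensions $k^2(\beta,\gamma)$ of degree $4$. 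This settles \ref{Bdiv}.

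For \ref{Bspl} I would use the characterization that $B$ is split exactly when $\q\cong\llangle 1,1\rrangle=\langle 1,1,1,1\rangle$. The key observation is that the value set $\q(B)=\{\,b_0^2+\beta b_1^2+\gamma b_2^2+\beta\gamma b_3^2\,\}$ equals the $k^2$-span of $1,\beta,\gamma,\beta\gamma$, which, since $\beta^2,\gamma^2\in k^2$, coincides with the field $k^2(\beta,\gamma)$; thus $\dim_{k^2}\q(B)=[k^2(\beta,\gamma):k^2]$. If $[k^2(\beta,\gamma):k^2]=1$ then $\beta,\gamma\in k^2$, and writing $\beta=b^2,\gamma=c^2$ and applying the scaling equivalence $\bi{a}\cong\bi{x^2a}$ coordinatewise gives $\q\cong\langle 1,1,1,1\rangle$, so $B$ is split. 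Conversely, an isometry preserves the value set, and the value set of $\langle 1,1,1,1\rangle$ is $\{(x_0+x_1+x_2+x_3)^2\}=k^2$, so a split $B$ forces $k^2(\beta,\gamma)=\q(B)=k^2$, i.e. degree $1$. For \ref{Bint}, degree $2$ is the only remaining possibility, hence by elimination it corresponds to the indeterminate type; to exhibit the promised subalgebra, note that degree $2$ forces at least one of $\beta,\gamma$ — say $\beta$ — to lie outside $k^2$, so $ke\oplus kv$ with $v^2=\beta e$ carries the anisotropic form $\llangle\beta\rrangle=\langle 1,\beta\rangle$ and is a $2$-dimensional division subalgebra, while $B$ itself is neither division (its form is isotropic, as $1,\beta,\gamma,\beta\gamma$ are $k^2$-dependent) nor split ($\dim_{k^2}\q(B)=2\neq 1$).

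The main obstacle I anticipate is not any single calculation but pinning down the dictionary cleanly, namely verifying that the value set $\q(B)$ genuinely equals the field $k^2(\beta,\gamma)$ — using that $\beta^2,\gamma^2\in k^2$ so that the $k^2$-algebra generated by $\beta,\gamma$ is already spanned by $1,\beta,\gamma,\beta\gamma$ and is closed under inverses — and that its $k^2$-dimension coincides simultaneously with the extension degree and with the anisotropic dimension of the quasi-Pfister form. Once this identification is secured, \ref{hofflag} supplies the essential input for \ref{Bdiv}, and the trichotomy argument delivers \ref{Bspl} and \ref{Bint} with little additional effort.
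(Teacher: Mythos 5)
Your proposal is correct, and it follows the route the paper intends: the paper states this corollary without proof as an immediate consequence of the Hoffmann--Laghribi bijection \ref{hofflag}, which is exactly the input you use for the division case. Your additional verifications --- that $b^2=\q(b)e$ makes division equivalent to anisotropy, that the value set of $\q$ equals $k^2(\beta,\gamma)$ so splitness pins down degree $1$, and that the indeterminate case follows by the trichotomy --- are precisely the routine details the paper leaves implicit, and they are all sound.
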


Notice that all of our division subalgebras of type $B$ are fields since this subalgebra is commutative.  Let us consider some specific examples of algebras of type $B$ over fields of rational functions with coefficients in a perfect field.  

\begin{ex}\normalfont
Let $\mathbb{F}$ be any finite (or algebraically closed) field of characteristic $2$, and consider $k = \mathbb{F}(\chi_1)$. In this case only split algebras or those of indeterminate type occur. Since $k^2 = \mathbb{F}(\chi_1^2)$, if $\beta \in G_*(k)\backslash G_*(k)^2$ then $k^2(\beta) = k$.  So the largest $k^2$-dimension of a field of the form $k^2(\beta, \gamma)$ is $2$, and we can choose $\beta, \gamma \in k^2$ to get an algebra of split type.  

When we adjoin another indeterminate, we have $k=\mathbb{F}(\chi_1,\chi_2)$, and we now have enough degrees of freedom to construct a four dimensional division algebra.  The squares in $k$ make up the subfield $k^2=\mathbb{F}(\chi_1^2, \chi_2^2)$.  There is only one division algebra since 
\[
k= k^2 \oplus k^2 \chi_1 \oplus k^2 \chi_2 \oplus k^2\chi_1\chi_2,
\]
and so $[k: k^2] = 4$, and $[k^2(\beta,\gamma): k^2]\leq 4$ when $\beta, \gamma \in k$.
\end{ex}

\begin{prop} \label{tinduce}
If $t\in \Aut(C)$ has order $2$ and fixes a totally singular subaglebra of $C$, then
\[ t(x+yu) = x + y(u+b), \]
for some $b \in \widehat{B}$.
\end{prop}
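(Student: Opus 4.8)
The plan is to show that an order-2 automorphism $t$ fixing a totally singular $B$ is completely determined by its value on $u$, and that this value must have the form $t(u) = u + b$ with $b \in \widehat{B}$; the formula on all of $C = B \oplus Bu$ then follows from multiplicativity.

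First, by \ref{mainlem2}, since $t$ has order $2$ and fixes a totally singular subalgebra, $t$ fixes $B = ke \oplus kv \oplus kw \oplus kvw$ elementwise. Because $t$ is an automorphism and $C = B \oplus Bu$, for $x, y \in B$ we have $t(x + yu) = t(x) + t(y)t(u) = x + y\,t(u)$, so $t$ is determined by $t(u)$. By \ref{z}, $t(u) = u + m + nu$ with $m \in B$, $n \in \widetilde{B}$, $m + nu \in ke^{\perp}$, and $\q(m + nu) = \langle m,u \rangle = m_0$.

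The crucial step is to prove $n = 0$, which I would extract from the requirement that $t$ respect the anticommutation between $\widetilde{B}$ and $u$. For any $b \in \widetilde{B}$, applying \ref{mineqlin} to the pair $(b,u)$ and using $\langle b,u \rangle = \langle b,e \rangle = 0$ and $\langle u,e \rangle = 1$ gives $bu + ub = b$. Since $t$ is an automorphism fixing $b$, applying $t$ yields $b\,t(u) + t(u)\,b = b$; adding this to the previous identity shows $bz + zb = 0$, where $z = t(u) + u = m + nu$. As $B$ is totally singular, \ref{mineqlin} forces $B$ to be commutative, so $bm + mb = 0$ and hence $b(nu) + (nu)b = 0$. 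Applying \ref{mineqlin} once more to $(nu, b)$, both orthogonal to $e$, reduces this to $\langle nu, b \rangle e = 0$. Writing $n = n_1 v + n_2 w + n_3 vw$ and using the pairings $\langle v, vu \rangle = \beta$, $\langle w, wu \rangle = \gamma$, $\langle vw, (vw)u \rangle = \beta\gamma$ together with the orthogonality of the remaining basis vectors, the condition $\langle nu, b \rangle = 0$ for all $b \in \widetilde{B}$ becomes $n_1\beta = n_2\gamma = n_3\beta\gamma = 0$. Since $\beta, \gamma \in G_*(k)$ are nonzero, $n = 0$.

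With $n = 0$ we have $t(u) = u + m$ with $m \in B$, and the constraints from \ref{z} become $m \in ke^{\perp}$ and $\q(m) = m_0$, that is, $m \in \widehat{C} \cap B = \widehat{B}$. Setting $b = m$ gives $t(u) = u + b$, and therefore $t(x + yu) = x + y(u + b)$ for all $x, y \in B$, as claimed. The main obstacle is the third-paragraph computation showing that the anticommutation of $z$ with $\widetilde{B}$ kills the $u$-component $n$; everything hinges on $\beta, \gamma$ being nonzero, so that the pairing $b \mapsto \langle nu, b \rangle$ is nondegenerate on $\widetilde{B}$ and vanishes only when $n = 0$.
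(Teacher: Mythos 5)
Your proof is correct, but it reaches the crucial step ($n=0$) by a genuinely different mechanism than the paper. The paper's own proof is a two-line exploitation of $t^2 = \id$ applied to $u$: from $t(u) = u + m + nu$ (via \ref{z}) one gets $t(m+nu) = m + nu$, so $z = m+nu$ lies in the fixed subalgebra $C^t = B$; since $C = B \oplus Bu$, this kills $n$ immediately, and the conditions from \ref{z} then place $b = m$ in $\widehat{B}$. You never apply the involution property to $u$ at all; beyond \ref{mainlem2} you use only that $t$ is an automorphism fixing $B$ elementwise, transporting the anticommutation relation $bu + ub = b$ for $b \in \widetilde{B}$ through $t$ and then pairing against the form to get $\langle nu, b\rangle = 0$ for all $b \in \widetilde{B}$, whence $n_1\beta = n_2\gamma = n_3\beta\gamma = 0$ and $n = 0$ by nondegeneracy of the pairing between $\widetilde{B}$ and $\widetilde{B}u$. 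Your computations check out — in particular $\langle nu, e\rangle = n_0 = 0$ because \ref{z} already puts $n \in \widetilde{B}$, so \ref{mineqlin} really does reduce $b(nu) + (nu)b = 0$ to $\langle nu, b\rangle e = 0$. What your route buys: it actually proves the stronger statement that \emph{every} $g \in \Aut(C)^B$, involution or not, has the form $g(x+yu) = x + y(u+b)$ with $b \in \widehat{B}$ — this is precisely the content of the paper's later proposition that $\Aut(C)^B \cong \widehat{B}$, whose proof obtains your key identity $\langle \tilde{a}, nu\rangle = 0$ in one line from the isometry property $\langle a, u\rangle = \langle g(a), g(u)\rangle$, bypassing your longer detour through \ref{mineqlin} and the commutativity of $B$ (and it then follows, as in the paper's subsequent corollary, that any such $g$ is automatically an involution of type II). What the paper's route buys is brevity: it needs neither the explicit pairing data $\langle v, vu\rangle = \beta$, $\langle w, wu\rangle = \gamma$, $\langle vw, (vw)u\rangle = \beta\gamma$ nor any multiplication identities, only the identification of $C^t$ with $B$.
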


\begin{proof}
By \ref{z}, we have $t(u) = u + m + n u$ with $m + nu \in ke^{\perp}$ and $\q(m + nu) = m_0$.  It remains to show that $m + nu \in B$.  Observe that
\[
t^2(u) = t(u + m + nu)  =  u + m + nu + t(m + nu),
\]
and since $t^2 =\id$ we have
\[
u + m + nu + t(m + nu) = u,
\]
and so $t(m + nu) = m + nu$. And we have $m+nu=b \in \widehat{B}$.
\end{proof}

\begin{prop}\label{gfixB}
Let $t \in \Aut(C)$ with $t^2 = \id$ and fixed point subalgebra $B$.  If $g \in \Aut(C)^{\mathcal{I}_t}$, then $g(B) = B$.
\end{prop}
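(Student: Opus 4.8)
The plan is to recognize that $\Aut(C)^{\mathcal{I}_t}$ is nothing but the centralizer of $t$ in $\Aut(C)$, and then to invoke the elementary principle that any automorphism commuting with $t$ must preserve the fixed-point space $C^t = B$. The entire argument rests on unpacking the superscript notation correctly and then running the same short computation that already appears at the end of the analogous Type I proposition (where $g(D) = D$ is deduced from $g(x) = tg(x)$).

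First I would observe that $g \in \Aut(C)^{\mathcal{I}_t}$ means precisely that $g$ is fixed by the inner automorphism $\mathcal{I}_t$, i.e.\ $\mathcal{I}_t(g) = tgt^{-1} = g$. Since $t^2 = \id$ we have $t^{-1} = t$, so this is equivalent to $tgt = g$, and multiplying on the left by $t$ yields the commutation relation $tg = gt$. Second, I would take an arbitrary $b \in B$; because $B = C^t$ is fixed elementwise by $t$, we have $t(b) = b$, and therefore
\[
t\big(g(b)\big) = (tg)(b) = (gt)(b) = g\big(t(b)\big) = g(b).
\]
This shows $g(b) \in C^t = B$, so $g(B) \subseteq B$. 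Finally, since $g$ is a bijective $k$-linear map and $B$ is finite-dimensional, the restriction $g|_B$ is injective on a finite-dimensional space, forcing $g(B) = B$.

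The main point requiring care is not computational but conceptual: one must correctly read $\Aut(C)^{\mathcal{I}_t}$ as the set of automorphisms fixed under conjugation by $t$ (the centralizer), rather than confusing it with the fixed-point subalgebra $C^t = B$ itself. Once that identification is made, there is no genuine obstacle; the result is a direct formal consequence of $g$ commuting with $t$, and the finite-dimensionality of $B$ upgrades the inclusion $g(B) \subseteq B$ to equality. No use of the explicit quasi-Pfister structure of $B$ or of the form of $t$ from \ref{tinduce} is needed, which keeps the proof independent of the split/division/indeterminate trichotomy for $B$.
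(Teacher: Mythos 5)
Your proposal is correct and matches the paper's own proof: the paper likewise unpacks $g \in \Aut(C)^{\mathcal{I}_t}$ as $tg(x) = gt(x) = g(x)$ for $x \in B$, concluding $g(B) \subseteq B$, and then upgrades to equality using bijectivity (your finite-dimensionality remark is just a slightly more explicit version of the paper's ``since $g$ and $t$ are bijections, the other containment holds as well''). No substantive difference in approach.
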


\begin{proof}
For any $x\in B$ and $g \in \Aut(C)^{\mathcal{I}_t}$, we have $tg(x) = gt(x)  = g(x)$. This shows that $g(x)$ is fixed by $t$, hence $g(x) \in B$. On the other hand, since $g$ and $t$ are bijections, the other containment holds as well.
\end{proof}

\begin{prop}  
Denote by $\Aut(C)^B$ the automorphisms of $C$ that fix $B$ elementwise. Then 
\[ 
\Aut(C)^B \cong  \widehat{B}. 
\]
\end{prop}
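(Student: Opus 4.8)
The plan is to define a map $\Phi : \Aut(C)^B \to \widehat{B}$ by $\Phi(g) = g(u) + u$ and to show it is a group isomorphism, where $\Aut(C)^B$ carries its composition structure and $\widehat{B}$ its additive one. The guiding observation is that, since $C = B \oplus Bu$ and $g(bu) = g(b)g(u) = b\,g(u)$ for $g \in \Aut(C)^B$ and $b \in B$, any such $g$ is completely determined by the single value $g(u)$. So the real content of the proposition is that the admissible values of $g(u)$ are exactly $u + b$ with $b \in \widehat{B}$.

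First I would check that $\Phi$ lands in $\widehat{B}$, which is the heart of the argument. By \ref{z}, for any $g \in \Aut(C)$ we may write $g(u) = u + m + nu$ with $m \in B$, $n \in kv \oplus kw \oplus kvw$, $m + nu \in ke^{\perp}$, and $\q(m+nu) = \langle m,u\rangle = m_0$. Set $z = m + nu = g(u) + u$ (using $\ch(k)=2$). Since $g$ is an isometry and fixes $B$ pointwise, for every $b \in B$ we have $\langle z, b\rangle = \langle g(u),b\rangle + \langle u,b\rangle = \langle g(u), g(b)\rangle + \langle u, b\rangle = 0$, so $z \in B^{\perp}$. As $B$ is totally singular, $m \in B \subseteq B^{\perp}$, whence $nu \in B^{\perp}$ too. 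Writing $n = n_1 v + n_2 w + n_3 vw$ and pairing $nu$ against $v, w, vw$ gives $n_1\beta = n_2\gamma = n_3\beta\gamma = 0$; since $\beta, \gamma \in G_*(k)$ this forces $n = 0$. Therefore $z = m \in B$ with $\q(z) = z_0 = \langle z, u\rangle$, i.e. $z \in \widehat{C} \cap B = \widehat{B}$, so $\Phi$ is well defined.

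Next I would dispatch the homomorphism and injectivity properties, both of which are short. For $g_1, g_2 \in \Aut(C)^B$, writing $z_i = \Phi(g_i) \in B$, we get $(g_1 g_2)(u) = g_1(u + z_2) = g_1(u) + g_1(z_2) = u + z_1 + z_2$, since $g_1$ fixes $z_2 \in B$; hence $\Phi(g_1 g_2) = z_1 + z_2 = \Phi(g_1) + \Phi(g_2)$. For injectivity, if $\Phi(g) = 0$ then $g(u) = u$, and combined with $g|_B = \id$ and $g(bu) = b\,g(u) = bu$ we obtain $g = \id$; thus $\ker \Phi$ is trivial.

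The last and hardest step is surjectivity. Given $b \in \widehat{B}$, I would introduce the candidate map $\sigma_b(x + yu) = x + y(u + b)$ for $x, y \in B$, which is linear, fixes $B$ pointwise, sends $u \mapsto u + b$, and satisfies $\sigma_b^2 = \id$ automatically in characteristic $2$ (so in particular it is bijective). What remains, and is the main obstacle, is to prove that $\sigma_b$ respects the multiplication of $C$, i.e. $\sigma_b(pq) = \sigma_b(p)\sigma_b(q)$ for all $p, q \in C$. I expect to verify this on the basis $\{e, v, w, vw, u, vu, wu, (vw)u\}$ using the Cayley--Dickson multiplication of $C = B \oplus Bu$ together with the defining relation $\q(b) = b_0 = \langle b, u\rangle$ of $b \in \widehat{B}$, which is precisely what should make the mixed products close up; alternatively this is subsumed by the already-recorded fact that $\widehat{C}$ sits inside $\Aut(C,B)$ via $z \mapsto (u \mapsto u + z)$, whose restriction to $\widehat{B} = \widehat{C} \cap B$ yields automorphisms fixing $B$ pointwise. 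Granting this, $\Phi(\sigma_b) = b$, so $\Phi$ is onto and hence an isomorphism.
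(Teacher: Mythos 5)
Your proposal is correct and follows essentially the same route as the paper: in both arguments the heart is \ref{z} combined with the isometry property of $g$ on the pointwise-fixed $B$, pairing $nu$ against $v$, $w$, $vw$ to force $n = 0$ and hence $g(u) = u + m$ with $m \in \widehat{B}$. Your explicit bookkeeping for $\Phi$ (homomorphism, injectivity, and surjectivity via the candidate map $\sigma_b$) merely makes precise what the paper leaves implicit, namely its standing assertion that elements of $\widehat{C}$ act as automorphisms inside $\Aut(C,B)$.
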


\begin{proof}
Let $x + yu \in B \oplus Bu = C$ and let $g \in \Aut(C)$ such that $g$ fixes $B$ elementwise.  Then by \ref{z} we have that $g(u) = u + m + nu$ where $m \in B$, $n \in \widetilde{B}$ and $\q(m+nu) = \langle m,u \rangle$.  Now, since $g \in \Aut(C) \subset \Orth(\q)$, for all $a \in B$ we have that 
\[
\langle a, u \rangle = \langle g(a), g(u) \rangle = \langle a, u + m+ nu \rangle = \langle a, u \rangle + \langle \tilde{a}, nu \rangle.
\]
This gives us $\langle \tilde{a}, nu \rangle = 0$ for all $\tilde{a} \in \widetilde{B}$, which is only true if $n=0$.  So $g(x+yu) = x + y(u+m)$ where $m\in \widehat{B}$.  Moreover $g$ is an involution of type II.
\end{proof}

\begin{cor}
An automorphism $g$ is an involution of type II if and only if $g \in \Aut(C)^B$.
\end{cor}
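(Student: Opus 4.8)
The plan is to prove the two implications separately, noting at the outset that one direction is already contained in the preceding Proposition and the other is almost immediate from the definition of type II; the genuine work is to pin down the identity edge case and to invoke the uniqueness of the fixed-point algebra.

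For the direction asserting that $g \in \Aut(C)^B$ forces $g$ to be an involution of type II, I would start from the conclusion of the preceding Proposition, which shows that any $g$ fixing $B$ elementwise necessarily has the shape $g(x+yu) = x + y(u+m)$ with $m \in \widehat{B}$. The first step is to check $g^2 = \id$ directly: writing $y(u+m) = ym + yu$ with $ym \in B$ and applying the displayed formula twice, the cross term cancels because $ym + ym = 0$ in characteristic $2$, giving $g^2(x+yu) = x+yu$. Thus $g$ has order dividing $2$ and fixes the four-dimensional totally singular subalgebra $B$ elementwise; provided $g \neq \id$, Lemma \ref{mainlem2} forces the fixed-point algebra $C^g$ to be exactly four-dimensional, so that the containment $B \subseteq C^g$ between two four-dimensional spaces yields $C^g = B$. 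Hence $g$ is an involution of type II with fixed algebra $B$.

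For the converse I would argue directly from the definition: an involution of type II fixes a four-dimensional totally singular subalgebra elementwise, and in the standing setup of this section that subalgebra is $B$, so $g$ fixes $B$ elementwise, i.e. $g \in \Aut(C)^B$. One may alternatively route this through \ref{tinduce}, which already puts such an involution in the form $t(x+yu) = x + y(u+b)$ with $b \in \widehat{B}$, and then observe via \ref{z} that the $B$-part of the action is trivial.

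The one point I would flag, and the only real subtlety, is the identity map: it lies in $\Aut(C)^B$ (it is the image of $0 \in \widehat{B}$ under the isomorphism of the preceding Proposition) yet is not an involution. I would resolve this by phrasing the correspondence on the level of sets as a bijection between the nonzero elements of $\widehat{B}$ and the involutions of type II fixing $B$, or equivalently by reading \emph{involution} to exclude the identity throughout. With that convention both implications close, and the substance of the Corollary is exactly the combination of the preceding Proposition with the four-dimensionality of the fixed algebra supplied by Lemma \ref{mainlem2}.
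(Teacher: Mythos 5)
Your proposal matches the paper's (implicit) route: the paper gives this corollary no separate proof, because the reverse implication is the closing sentence of the preceding proposition --- whose normal form $g(x+yu) = x + y(u+m)$ with $m \in \widehat{B}$ yields $g^2 = \id$ by exactly the characteristic-$2$ cancellation you compute --- and the forward implication is essentially definitional, via \ref{tinduce}. Your observation about the identity (the image of $0 \in \widehat{B}$ lies in $\Aut(C)^B$ but is not an involution) is a legitimate refinement of the statement, which as literally written is off by this one element; reading the correspondence as matching nonzero elements of $\widehat{B}$ with type II involutions is the right repair.

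One step in your argument is, however, false as stated, though it is not load-bearing. You claim that \ref{mainlem2} forces the fixed-point algebra $C^g$ to be exactly four-dimensional, so that $B \subseteq C^g$ gives $C^g = B$. When $B$ is not a division algebra this fails: take $\beta = \gamma = 1$ and $m = v + w$, which lies in $\widehat{B}$ since $\q(m) = 1 + 1 = 0 = m_0$. Then $m$ is nilpotent, $(v+w)^2 = 0$, and a direct computation (using commutativity of $B$ and Artin's theorem for associativity of the subalgebra generated by $v$ and $w$) shows its annihilator in $B$ is the two-dimensional span of $e + vw$ and $v + w$; consequently $g(x + yu) = x + yu + ym$ fixes pointwise the six-dimensional subspace consisting of all $x + yu$ with $ym = 0$, which properly contains $B$. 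So the fixed-point set of a type II involution need not equal $B$, and \ref{mainlem2} asserts only the \emph{existence} of a four-dimensional subalgebra fixed elementwise, not that the full fixed set has dimension four. Your conclusion survives once this sentence is deleted: being of type II requires only that $g$ fix some totally singular four-dimensional subalgebra elementwise, and $g^2 = \id$ together with $g \neq \id$ and $g \in \Aut(C)^B$ already delivers that.
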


\begin{prop} \label{Chatgroup1}
Let $g \in \Aut(C,B)$ such that $g(u) = u + m + nu$. Then
\[
\langle g(a), u \rangle = \langle a,u \rangle + \langle g(a), nu \rangle.
\]
\end{prop}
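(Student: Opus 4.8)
The plan is to exploit two facts already available: every automorphism of $C$ is an isometry of the norm form, and the subalgebra $B$ is totally singular. As recorded in the proof of \ref{bar} (citing \cite{sv00}), any $g \in \Aut(C)$ preserves the bilinear form, so for every $a \in B$ we have the identity $\langle g(a), g(u) \rangle = \langle a, u \rangle$. This is the starting point, and I would emphasize that the statement is meant for $a \in B$, since that is precisely the hypothesis under which the argument below goes through.

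First I would substitute the hypothesis $g(u) = u + m + nu$ into the isometry identity and expand by bilinearity, obtaining
\[
\langle a, u \rangle = \langle g(a), u \rangle + \langle g(a), m \rangle + \langle g(a), nu \rangle.
\]
The decisive step is then to eliminate the cross term $\langle g(a), m \rangle$. Because $g \in \Aut(C,B)$ leaves $B$ invariant, we have $g(a) \in B$; and by \ref{z} the component $m$ also lies in $B$. Since $B$ is totally singular, the restriction of the bilinear form to $B$ vanishes identically (any two elements of $B$ are mutually orthogonal, as noted in \ref{4Dsubalgebras}), so $\langle g(a), m \rangle = 0$.

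Finally I would rearrange the surviving identity. Working in characteristic $2$, where addition and subtraction coincide, transposing the term $\langle g(a), u \rangle$ yields exactly
\[
\langle g(a), u \rangle = \langle a, u \rangle + \langle g(a), nu \rangle,
\]
which is the claim. I do not expect a genuine obstacle here: the computation is short, and the only point demanding care is verifying that both $g(a)$ and $m$ sit inside $B$ so that total singularity can be invoked. This is precisely where the invariance condition $g(B) = B$ is used, rather than the stronger hypothesis that $g$ fixes $B$ elementwise, and it is what separates this computation from the nearly identical one in the preceding proposition (where $g(a) = a$ reduces the cross term differently).
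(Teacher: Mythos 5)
Your proof is correct and follows essentially the same route as the paper's: both begin with the isometry identity $\langle g(a), g(u)\rangle = \langle a,u\rangle$, expand using $g(u) = u + m + nu$, and eliminate the cross term $\langle g(a), m\rangle$ via the total singularity of $B$ (the paper's ``$B \subset B^{\perp}$''). Your write-up merely makes explicit two points the paper leaves implicit --- that the statement is to be read for $a \in B$, and that the invariance $g(B) = B$ is exactly what places $g(a)$ in $B$ --- which is a clarification, not a different argument.
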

\begin{proof}
If $g \in \Aut(C,B)$, then $g \in \Orth(\q)$. Then
\begin{align*}
\langle a,u \rangle &= \langle g(a),g(u) \rangle \\
&= \langle g(a), u + m + nu \rangle \\
&= \langle g(a), u \rangle + \langle g(a), nu \rangle,
\end{align*}
since $B \subset B^{\perp}$.  Solving for $\langle g(a), u \rangle$ we have our relation.
\end{proof}

\begin{prop}\label{typeIIiso}
If $t$ and $s$ are involutions of type II fixing a subalgebra $B$, where $t(u) = u+ b$ and $s(u) = u + b'$, then $t$ and $s$ are $\Aut(C)$-conjugate if and only if there exists $g \in \Aut(C,B)$ with $g(u) = u + m + nu$, $m + nu \in \widehat{C}$, and $g(b) = (e + n)b'$.
\end{prop}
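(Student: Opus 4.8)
The plan is to reduce the conjugacy relation to its action on the single generator $u$, since $C = B \oplus Bu$ is generated as an algebra by $B$ together with $u$, so that two automorphisms of $C$ coincide as soon as they agree on $B \cup \{u\}$. Both implications then hinge on one computation, the evaluation of the intertwining identity on $u$.

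For the forward implication I would assume $t$ and $s$ are conjugate, say $g t g^{-1} = s$ with $g \in \Aut(C)$, and first show $g \in \Aut(C,B)$. Since $t$ and $s$ are type II involutions whose fixed subalgebras are exactly the four-dimensional $B$ (by \ref{mainlem2}), and since $gtg^{-1}=s$ forces $g(C^t) = C^s$, we obtain $g(B) = B$, so $g$ leaves $B$ invariant. Applying \ref{z} then puts $g(u)$ in the form $u + m + nu$ with $m \in B$, $n \in \widetilde{B}$ and $\q(m+nu) = \langle m,u\rangle$, which is precisely the assertion $m + nu \in \widehat{C}$.

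The heart of both directions is evaluating $gt = sg$ on $u$. On one side $gt(u) = g(u+b) = (u+m+nu) + g(b)$, where $g(b) \in B$ because $b \in \widehat{B} \subseteq B$ and $g(B)=B$. On the other side, using that $s$ fixes $m, n \in B$ and is multiplicative, $sg(u) = s(u) + m + n\,s(u) = (u+b') + m + nu + nb'$. Cancelling the common terms $u + m + nu$ yields $g(b) = b' + nb' = (e+n)b'$, establishing the forward implication. For the reverse implication I would run this backward: given $g \in \Aut(C,B)$ with $g(u) = u + m + nu \in \widehat{C}$ and $g(b) = (e+n)b'$, the same computation shows $gt(u) = sg(u)$; moreover $gt$ and $sg$ agree on all of $B$, since for $x \in B$ both equal $g(x) \in B$ (using that $t, s$ fix $B$ and $g(B)=B$). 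As $gt$ and $sg$ are automorphisms agreeing on the generating set $B \cup \{u\}$, they are equal, so $gtg^{-1} = s$ and $t, s$ are conjugate.

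The main obstacle is the first step of the forward implication, namely deducing $g(B)=B$ from conjugacy alone; this rests on knowing that the fixed subalgebra of each type II involution is exactly $B$, which is what lets conjugation carry $B$ to $B$ and unlocks the normal form for $g(u)$ from \ref{z}. Beyond that the argument is a routine octonion computation, whose only delicate points are that $s$ acts as the identity on $B$ (hence on $m$ and $n$) and that $b, b' \in B$, which keeps the products $nb'$ and $g(b)$ inside $B$ and makes the term-by-term comparison legitimate.
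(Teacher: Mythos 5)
Your proposal is correct and follows essentially the same route as the paper: both proofs evaluate the intertwining relation $gt = sg$ on the element $u$, using multiplicativity of $s$ and the fact that $s$ fixes $m, n \in B$, to cancel common terms and extract $g(b) = (e+n)b'$, with the converse obtained by reversing the computation. You additionally make explicit two points the paper's terser proof glosses over --- that $gtg^{-1}=s$ forces $g(B)=B$ because $B = C^t = C^s$ by \ref{mainlem2}, and that in the converse direction agreement of $gt$ and $sg$ on $B \cup \{u\}$ suffices since $C = B \oplus Bu$ --- both of which are accurate and strengthen the argument.
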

\begin{proof}
Let $g \in \Aut(C)$ such that $gtg^{-1} = s$. Then $g(u)= u + m + nu$ for $m + nu \in \widehat{C}$, and
\[ gt(u) = g(u + b) = u + m + nu + g(b),  \]
and
\[ sg(u) = s(u + m + nu) = u + b' + m + ns(u) = u + b' + m+nu + nb'. \]
Setting these two expressions equal to each other gives us 
\[ g(b) = (e+n)b'. \]
  If we assume there exists $g\in \Aut(C)$ such that $g(b) = (e+n)b'$ we can reverse the argument to show the converse.
\end{proof}

\begin{lem} \label{Btildeinv}
If $g \in \Aut(C)^{\mathcal{I}_t}$ where $t(u) = u + e$, then $g\big(\widetilde{B}\big) = \widetilde{B}$.
\end{lem}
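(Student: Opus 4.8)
The plan is to exploit the very specific form of the involution, namely that $t(u) = u + e$, to pin down the $u$-component of $g(u)$. First I would record what membership in $\Aut(C)^{\mathcal{I}_t}$ provides. Since the condition $\mathcal{I}_t(g) = g$ is equivalent to $g$ commuting with $t$, Proposition \ref{gfixB} yields $g(B) = B$, so $g \in \Aut(C,B)$ and the machinery of \ref{z}, \ref{Chatgroup1}, and \ref{typeIIiso} all applies. By \ref{z} I may write $g(u) = u + m + nu$ with $m \in B$ and $n \in \widetilde{B}$.

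The main step is to show $n = 0$, and this is the crux of the argument. I would apply \ref{typeIIiso} with $s = t$ (so $b = b' = e$): because $g$ conjugates $t$ to itself, the criterion forces $g(e) = (e+n)e = e + n$, while $g$ fixes the identity, whence $e = e + n$ and $n = 0$. Equivalently, one can argue directly from $gt = tg$ evaluated at $u$: using that $t$ fixes $B$ elementwise (so $t(m) = m$) and that $t(nu) = n\,t(u) = n(u+e) = nu + n$, comparing $g(t(u)) = g(u) + e$ with $t(g(u)) = u + m + nu + e + n$ isolates the extra term $n$ and gives $n = 0$. This is exactly the place where the hypothesis $t(u) = u + e$ (rather than a general $u + b$) is essential: for arbitrary $b$ the relation only yields $g(b) = (e+n)b$, which need not force $n$ to vanish, whereas $b = e$ together with $g(e) = e$ does. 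I expect this bookkeeping to be the only genuine obstacle; everything after it is formal.

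With $n = 0$ in hand the conclusion is immediate. Substituting $n = 0$ into \ref{Chatgroup1} gives $\langle g(a), u \rangle = \langle a, u \rangle$ for every $a \in B$. Recalling that $\widetilde{B} = \{\, b \in B \mid \langle b,u \rangle = 0 \,\}$, any $b \in \widetilde{B}$ then satisfies $\langle g(b), u \rangle = \langle b, u \rangle = 0$, so $g(b) \in \widetilde{B}$; hence $g(\widetilde{B}) \subseteq \widetilde{B}$. Since $g$ is a $k$-linear bijection and $\widetilde{B}$ is three-dimensional, $g$ restricts to an injective endomorphism of $\widetilde{B}$, which is therefore surjective; alternatively, applying the same inclusion to $g^{-1} \in \Aut(C)^{\mathcal{I}_t}$ supplies the reverse containment. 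Either way, $g(\widetilde{B}) = \widetilde{B}$.
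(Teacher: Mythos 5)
Correct, and essentially the paper's own argument: after pinning down $g(u) = u + m$ with no $u$-component, both you and the paper use that $g$ is an isometry fixing $B$ (so that $\langle g(a), m\rangle = 0$, the form being totally singular on $B$) to conclude $\langle g(\tilde{a}), u\rangle = 0$ for all $a \in B$, giving $g(\widetilde{B}) \subseteq \widetilde{B}$ and then equality by bijectivity. The one point where you go beyond the paper is in actually deriving $n = 0$: the paper's proof simply asserts $g(u) = u + m$ with $m \in \widehat{B}$, whereas your computation comparing $g(t(u)) = u + m + nu + e$ with $t(g(u)) = u + m + nu + n + e$ (using $t|_B = \id$ and $t(u) = u+e$) is a correct and welcome justification of that step.
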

\begin{proof}
Let $g \in \Aut(C)^{\mathcal{I}_t}$. Then $g(B) = B$, and $g(u) = u + m$ for $m \in \widehat{B}$.  For all $a \in B$, recall that $\tilde{a}$ is the projection of $a$ onto $\widetilde{B}$. Then we have
\[ \langle a,u \rangle = \langle g(a), g(u) \rangle = \langle g(a), u + m \rangle = \langle g(a),u \rangle.\]
Now $\langle g(a) , u \rangle = \langle \langle a,u \rangle e + g(\tilde{a}), u \rangle$, so we see that
\begin{align*}
\langle a,u \rangle &= \langle \langle a,u \rangle e + g(\tilde{a}), u \rangle \\
\langle a,u \rangle &= \langle a,u \rangle \langle e,u \rangle + \langle g(\tilde{a}), u \rangle \\ 
0 &= \langle g(\tilde{a}),u \rangle, 
\end{align*}
for all $a \in B$.  So $g(\tilde{a}) \in \widetilde{B}$. 
\end{proof}

\begin{prop}
Let $t(u)=u + e$. Then $\Aut(C)^{\mathcal{I}_t} \cong \Aut(B) \ltimes \widehat{B}$.
\end{prop}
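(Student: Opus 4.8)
The plan is to realize $\Aut(C)^{\mathcal{I}_t}$ as a split extension of $\Aut(B)$ by $\widehat{B}$ through the restriction-to-$B$ homomorphism, with $t(u)=u+e$ throughout. First I would define $\Phi:\Aut(C)^{\mathcal{I}_t}\to\Aut(B)$ by $\Phi(g)=g|_B$. This is well defined: \ref{gfixB} gives $g(B)=B$ for every $g\in\Aut(C)^{\mathcal{I}_t}$, and since $g$ fixes $e$ and is multiplicative, $g|_B$ is an algebra automorphism of $B$; clearly $\Phi$ is a group homomorphism. By \ref{Btildeinv} each such $g$ satisfies $g(\widetilde{B})=\widetilde{B}$, so the image of $\Phi$ consists of automorphisms of $B$ that preserve the functional $b\mapsto\langle b,u\rangle$. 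This compatibility is exactly what will make the section below land back inside $\Aut(C)^{\mathcal{I}_t}$.

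Next I would identify the kernel. If $g|_B=\id$ then $g$ fixes $B$ elementwise, i.e. $g\in\Aut(C)^B$, and conversely every $g\in\Aut(C)^B$ commutes with $t$: writing $g(u)=u+m$ one checks directly that $gt(x+yu)=x+y+yu+ym=tg(x+yu)$ for all $x,y\in B$. Hence $\ker\Phi=\Aut(C)^B$, which by the earlier proposition is isomorphic to $\widehat{B}$. In particular $\widehat{B}$ is realized as a normal subgroup of $\Aut(C)^{\mathcal{I}_t}$.

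Then I would build a splitting $s:\Aut(B)\to\Aut(C)^{\mathcal{I}_t}$ sending $\sigma$ to its $u$-fixing extension $\hat\sigma(x+yu)=\sigma(x)+\sigma(y)u$ for $x,y\in B$. By construction $\hat\sigma$ is $k$-linear, bijective, $\hat\sigma|_B=\sigma$, and $\hat\sigma(u)=u$; consequently $\hat\sigma$ commutes with $t$ (it fixes $u$ and restricts to $\sigma$ on $B$), and $s$ is visibly a homomorphism with $\Phi\circ s=\id$. The substantive claim is that $\hat\sigma$ is an algebra automorphism of $C$, which I would verify on the basis $\{e,v,w,vw,u,vu,wu,(vw)u\}$: products internal to $B$ are preserved because $\sigma\in\Aut(B)$; products $b\cdot u$ with $b\in B$ are preserved by the very definition of $\hat\sigma$, since $\hat\sigma(bu)=\sigma(b)u=\hat\sigma(b)\hat\sigma(u)$; and the remaining mixed products are governed by the linearized minimum equation \ref{mineqlin}, which here reads $uc=cu+\langle c,u\rangle e+c$ for $c\in B$. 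Once multiplicativity is checked, $s$ splits $\Phi$ and we obtain $\Aut(C)^{\mathcal{I}_t}=\ker\Phi\rtimes s(\Aut(B))\cong\Aut(B)\ltimes\widehat{B}$, the action of $\sigma$ on $m\in\widehat{B}$ being $\sigma(m)$ (which indeed lands in $\widehat{B}$ because $\sigma$ preserves both $\q|_B$ and $\langle\,\cdot\,,u\rangle$).

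The main obstacle is precisely the multiplicativity of $\hat\sigma$ on the mixed products. Applying $\hat\sigma$ to $uc=cu+\langle c,u\rangle e+c$ produces $\sigma(c)u+\langle c,u\rangle e+\sigma(c)$ on one side and $u\sigma(c)=\sigma(c)u+\langle\sigma(c),u\rangle e+\sigma(c)$ on the other, so the two agree if and only if $\langle\sigma(c),u\rangle=\langle c,u\rangle$. Thus the $e$-terms cancel correctly exactly when $\sigma$ preserves the totally singular norm $\q|_B$ together with the functional $\langle\,\cdot\,,u\rangle$, equivalently $\sigma(\widetilde{B})=\widetilde{B}$. This is the compatibility already guaranteed by \ref{Btildeinv} for the automorphisms in the image of $\Phi$, so the section is defined on all of that image and the splitting closes up; the analogous verifications for $(yu)(y'u)$ and $(yu)\cdot b$ reduce to the same isometry and $\widetilde{B}$-invariance of $\sigma$.
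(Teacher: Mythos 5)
Your overall architecture coincides with the paper's proof: you restrict to $B$ (well defined by \ref{gfixB}), identify the kernel with $\Aut(C)^B \cong \widehat{B}$, and extend automorphisms of $B$ via $\hat\sigma(x+yu)=\sigma(x)+\sigma(y)u$, recovering exactly the semidirect product action $(\sigma',m')(\sigma,m)=(\sigma'\sigma,\,m'+\sigma'(m))$ that the paper encodes in its single map $\Psi(g)=(g_B,m)$. The genuine gap is surjectivity. Your own computation shows that $\hat\sigma$ is multiplicative if and only if $\langle \sigma(c),u\rangle = \langle c,u\rangle$ for all $c \in B$, i.e.\ if and only if $\sigma(\widetilde{B})=\widetilde{B}$, and \ref{Btildeinv} guarantees this property only for automorphisms already in the image of $\Phi$. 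So your section $s$ is defined on $\mathrm{im}\,\Phi$, and what your argument actually proves is $\Aut(C)^{\mathcal{I}_t}\cong \mathrm{im}(\Phi)\ltimes\widehat{B}$; in the final step, where you write $\ker\Phi\rtimes s(\Aut(B))$, you silently apply $s$ to all of $\Aut(B)$ without ever showing $\mathrm{im}\,\Phi=\Aut(B)$. That equality is precisely the step the paper's proof supplies when it asserts that every $g_B\in\Aut(B)$ extends to an element $(g_B,m)\in\Aut(C)^{\mathcal{I}_t}$ for any $m\in\widehat{B}$, which is what makes $\Psi$ onto.

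Nor can the missing step be treated as routine, because an abstract algebra automorphism of $B$ need not stabilize $\widetilde{B}$: since $b^2=\q(b)e$, every $\sigma\in\Aut(B)$ automatically preserves $\q|_B$, but the hyperplane $\widetilde{B}$ is extra data attached to the choice of $u$. For instance, when $\q\cong\llangle 1,1\rrangle$ with $v^2=w^2=e$, the assignment $\sigma(v)=e+v+w$, $\sigma(w)=w$ satisfies $\sigma(v)^2=e+e+e=e$ and is bijective, hence defines $\sigma\in\Aut(B)$, yet $\langle \sigma(v),u\rangle = 1 \neq 0 = \langle v,u\rangle$. By your own equivalence this $\sigma$ has no preimage under $\Phi$: any $g\in\Aut(C)^{\mathcal{I}_t}$ has $g(u)=u+m$ with $m\in B$ (comparing $gt(u)$ with $tg(u)$ forces the $nu$-term of \ref{z} to vanish), so $\langle g(b),u\rangle=\langle g(b),g(u)\rangle=\langle b,u\rangle$ because $B$ is totally singular. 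Hence in the non-division cases the restriction map is not onto the full automorphism group of $B$, and the extension strategy (yours and the paper's alike) delivers the proposition only when $\Aut(B)$ is interpreted as the subgroup stabilizing $\widetilde{B}$, the reading that \ref{Btildeinv} points to; your write-up neither adopts that reading nor addresses such $\sigma$. A lesser omission: you verify multiplicativity of $\hat\sigma$ only on products $uc$; the products $(yu)(y'u)$, which involve $u^2=u+\alpha e$, are asserted rather than checked (the paper likewise relegates this to a ``straightforward computation,'' so this is minor by comparison).
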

\begin{proof}
Let $t$ be an involution of type II such that $t(u) = u + e$. Now, define the map $\Psi: \Aut(C)^{\mathcal{I}_t} \to \Aut(B) \ltimes \widehat{B}$ to be the map
\[ \Psi(g) = (g_B, m), \]
where $g_B = g|_B$ and $g(u) = u + m$.  By \ref{Btildeinv} and through straight forward computation, it can be shown that every $g_B \in \Aut(B)$ can be extended to an element $(g_B, m) \in \Aut(C)^{\mathcal{I}_t} \subset \Aut(C)$ for any $m \in \widehat{B}$.  So $\Psi$ is onto, and the $\ker(\Psi)$ is the set of automorphisms leaving $B$ invariant that restrict to the identity on $B$ and map $u$ to itself or in other words $\ker(\Psi) = \{\id\}$.  We can also show that $\Psi$ is a homomorphism by computing
\begin{align*}
g'g(x+yu) &= g' \big(g_B(x) + g_B(y) (u + m) \big) \\
&= g'_Bg_B(x) + g'_Bg_B(y) g'_B(u + m) \\
&= g'_Bg_B(x) + g'_Bg_B(y)\big(u + m' + g'_B(m) \big)
\end{align*}
and taking the product in $\Aut(B) \ltimes \widehat{B}$ to be 
\[ (g'_B,m')(g_B,m) = \big(g'_Bg_B,m'+ g'_B(m) \big) \Rightarrow \Psi(g')\Psi(g) = \Psi(g'g). \]
\end{proof}

\begin{lem}\label{isofixpoint}
Let $t$ and $s$ be involutions of type II fixing the same subalgebra $B$. If $t$ is $\Aut(C)$-conjugate to $s$, then $\Aut(C)^{\mathcal{I}_t}$ is $\Aut(C)$-conjugate to $\Aut(C)^{\mathcal{I}_s}$.
\end{lem}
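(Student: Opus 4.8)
The plan is to identify $\Aut(C)^{\mathcal{I}_t}$ with the centralizer of $t$ in $\Aut(C)$ and then to invoke the elementary fact that conjugate elements have conjugate centralizers; the hypothesis that $t$ and $s$ fix the same subalgebra $B$ plays no role in the argument and merely fixes the setting in which both are involutions of type II. Since $\mathcal{I}_t$ is conjugation by $t$, an element $h \in \Aut(C)$ satisfies $\mathcal{I}_t(h) = h$ precisely when $tht^{-1} = h$, that is, when $th = ht$. Thus
\[ \Aut(C)^{\mathcal{I}_t} = \{ h \in \Aut(C) \ | \ th = ht \}, \]
and likewise $\Aut(C)^{\mathcal{I}_s} = \{ h \in \Aut(C) \ | \ sh = hs \}$. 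This reinterpretation is the only step requiring a word of justification, and it is already implicit in the earlier characterization of $\Aut(C)^{\mathcal{I}_t}$ through the commutation condition $rpc = pcr$.

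By hypothesis there is a $g \in \Aut(C)$ with $gtg^{-1} = s$. First I would fix an arbitrary $h \in \Aut(C)^{\mathcal{I}_t}$ and check directly that its conjugate $ghg^{-1}$ commutes with $s$: using $th = ht$ together with $s = gtg^{-1}$,
\[ (ghg^{-1})s = ghg^{-1}gtg^{-1} = g(ht)g^{-1} = g(th)g^{-1} = gtg^{-1}\,ghg^{-1} = s(ghg^{-1}). \]
This gives $g\,\Aut(C)^{\mathcal{I}_t}\,g^{-1} \subseteq \Aut(C)^{\mathcal{I}_s}$. Running the same one-line computation with the roles of $t$ and $s$ (and of $g$ and $g^{-1}$) exchanged, using $g^{-1}sg = t$, yields the reverse inclusion, so that in fact $g\,\Aut(C)^{\mathcal{I}_t}\,g^{-1} = \Aut(C)^{\mathcal{I}_s}$. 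Hence the two fixed point groups are $\Aut(C)$-conjugate, realized by the very element $g$ that conjugates $t$ to $s$.

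There is no genuine obstacle in this lemma: once the fixed point group is recognized as a centralizer, the content is purely formal group theory and reduces to a single conjugation identity. The only care needed is to present the identification of $\Aut(C)^{\mathcal{I}_t}$ with the centralizer at the outset, so that the subsequent computation reads cleanly, and to record that conjugacy of the subgroups is witnessed by the same $g$ — a fact that will be convenient if one later wants the conjugating element to inherit properties (such as lying in $\Aut(C,B)$, as in \ref{typeIIiso}) from the conjugacy of $t$ and $s$ themselves.
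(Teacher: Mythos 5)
Your proof is correct and takes essentially the same approach as the paper: both arguments identify $\Aut(C)^{\mathcal{I}_t}$ with the centralizer of $t$ (using $t^{-1}=t$) and conjugate by the element $g$ satisfying $gtg^{-1}=s$, the paper running the identical one-line computation in the opposite direction, namely showing $g^{-1}hg$ commutes with $t$ whenever $h$ commutes with $s$. You merely make explicit the reverse inclusion that the paper leaves implicit by symmetry.
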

\begin{proof}
Consider $h \in \Aut(C)^{\mathcal{I}_s}$ and $gtg^{-1} = s$.   This requires that $g(B)=B$.  Then 
\[ h = shs = gtg^{-1} h gtg^{-1}, \]
and 
\[ g^{-1}h g = t g^{-1} h g t. \]
\end{proof}

\begin{lem}\label{ycon}
If $C=B\oplus Bu$ is not a division algebra and $b \in \widehat{B}$ with $q(b) \neq 0$ then there exists $n \in \widetilde{B}$ such that $n = b^{-1} + e$.
\end{lem}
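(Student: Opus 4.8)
The plan is to reduce the claim to an explicit description of $b^{-1}$ inside the commutative Clifford algebra $B$ and then simply track the coefficient of $e$. First I would record the shape of the inverse. Since $B$ is totally singular, every $b \in B$ satisfies $\langle b, e \rangle = 0$, so $\bar{b} = \langle b,e\rangle e + b = b$, and the minimum equation \ref{mineq} collapses to $b^2 = \q(b)e$. Combined with the identity $b^{-1} = \q(b)^{-1}\bar{b}$, this gives $b^{-1} = \q(b)^{-1} b$; in particular $b^{-1} \in B$, and the hypothesis $\q(b) \neq 0$ is exactly what guarantees that $b$ is invertible.

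Next I would decompose $b$ along the basis of $B$. Writing $b = b_0 e + \tilde{b}$ with $\tilde{b} \in \widetilde{B} = kv \oplus kw \oplus kvw$ and $b_0 = \langle b,u \rangle$, the defining condition $b \in \widehat{B}$ reads precisely $\q(b) = b_0$. Substituting into the inverse formula yields
\[ b^{-1} + e = \q(b)^{-1}(b_0 e + \tilde{b}) + e = \big(\q(b)^{-1} b_0 + 1\big) e + \q(b)^{-1}\tilde{b}. \]
Since $b_0 = \q(b) \neq 0$, the coefficient of $e$ equals $b_0^{-1}b_0 + 1 = 0$ in characteristic $2$, so $b^{-1} + e = b_0^{-1}\tilde{b}$. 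Setting $n = b_0^{-1}\tilde{b}$ then gives an element of $\widetilde{B}$ with $n = b^{-1} + e$, which is the assertion.

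The computation has no real obstacle; the only care required is in invoking the correct characteristic-$2$ facts, namely the vanishing of $\langle b, e\rangle$ on the totally singular $B$ and the equality $\q(b) = b_0$ characterizing $\widehat{B}$. I would also flag that the hypothesis that $C = B \oplus Bu$ is not a division algebra is not itself consumed in producing $n$; rather it fixes the regime in which this normalization is later applied, namely in constructing, via \ref{typeIIiso}, a conjugating automorphism in $\Aut(C,B)$ that carries a given type II involution to the standard involution $u \mapsto u + e$. The substantive use of the non-division hypothesis therefore lies in that companion existence argument rather than in this computation.
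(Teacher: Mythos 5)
Your proof is correct and takes essentially the same route as the paper's: both rest on the formula $b^{-1} = \q(b)^{-1}b$ (which you justify, slightly more carefully than the paper, via $\bar{b} = b$ on the totally singular $B$), the defining condition $\q(b) = b_0$ of $\widehat{B}$, and the resulting cancellation of the identity component in characteristic $2$, yielding $n = \q(b)^{-1}\tilde{b} \in \widetilde{B}$. Your closing remark is also accurate: the paper's own proof never consumes the non-division hypothesis either, which enters only when the lemma is applied in \ref{conjtoIDqbne0} via \ref{typeIIiso}.
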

\begin{proof}
For any $b \in \widehat{B}$,  $b^{-1}= \q(b)^{-1}b$, and the coefficient of the identity component of $b$ is $\q(b)$ by definition of $\widehat{B}$. So $b^{-1} = e + \q(b)^{-1}\tilde{b}$.  So we have $n=\q(b)^{-1}\tilde{b} \in \widetilde{B}$.
\end{proof}

\begin{cor}\label{conjtoIDqbne0}
Let $t$ be an involution of type II fixing a totally singular subalgebra $B$ such that $t(u) = u + b'$ and $\q(b') \neq 0$. Then $t$ is $\Aut(C)$-conjugate to the map $u \to u + e = \bar{u}$.
\end{cor}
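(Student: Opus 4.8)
The plan is to apply the conjugacy criterion of \ref{typeIIiso} with the target involution $s$ given by $s(u) = u + e = \bar{u}$; since $\Aut(C)$-conjugacy is symmetric, it suffices to conjugate $s$ into $t$. The two ingredients are \ref{ycon}, which turns the hypothesis $\q(b') \neq 0$ into a usable $\widetilde{B}$-element, and the realization of $\widehat{C}$ inside $\Aut(C,B)$, which manufactures the conjugating automorphism.

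Because $\q(b') \neq 0$, \ref{ycon} supplies $n := (b')^{-1} + e \in \widetilde{B}$, so that $e + n = (b')^{-1}$. I would then invoke \ref{typeIIiso} with $s$ (whose defining element is $e$) as the first involution and $t$ (whose defining element is $b'$) as the second: it is enough to produce $g \in \Aut(C,B)$ with $g(u) = u + m + nu$, $m + nu \in \widehat{C}$, and $g(e) = (e+n)b'$. As every automorphism fixes the identity, $g(e) = e$, so this last requirement becomes $(e+n)b' = (b')^{-1}b' = e$, which holds automatically for our choice of $n$. Thus the only surviving task is to exhibit an automorphism $g$ leaving $B$ invariant whose $u$-image has prescribed $\widetilde{B}$-component $n$.

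For that I would use the identification underlying \ref{bhatplus} of $\widehat{C}$ with a subgroup of $\Aut(C,B)$, under which each $z = m + nu \in \widehat{C}$ is the image $g(u) = u + z$ of an automorphism $g \in \Aut(C,B)$. By \ref{z}, and since $m + nu \in ke^{\perp}$ holds automatically for totally singular $B$, membership $m + nu \in \widehat{C}$ reduces to the single norm equation $\q(m + nu) = m_0$; with $n$ fixed this is one inhomogeneous condition on the coefficients of $m \in B$. The main obstacle is precisely the solvability of this equation, i.e. realizing the prescribed $\widetilde{B}$-component $n$ by an actual automorphism fixing $B$ setwise; this is where the hypothesis that $C$ is not a division algebra enters, forcing the relevant quasi-Pfister form to be isotropic and hence $m$ to exist. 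Once $g$ is produced, \ref{typeIIiso} gives $g s g^{-1} = t$, and by symmetry of conjugacy $t$ is $\Aut(C)$-conjugate to $u \mapsto u + e = \bar{u}$, as claimed.
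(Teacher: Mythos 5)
Your first two steps are precisely the paper's proof: apply \ref{typeIIiso} with the pair $(b,b')=(e,b')$, note that $g(e)=e$ turns the matching condition into $e=(e+n)b'$, and obtain $n=(b')^{-1}+e=\q(b')^{-1}\tilde{b}'\in\widetilde{B}$ from \ref{ycon}. The paper's proof consists of exactly these two citations and stops there, so up to that point you agree with it completely (and you are right that a non-division hypothesis is silently inherited from \ref{ycon}; without it the statement would contradict \ref{autCconjugate}).

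Your added third step --- realizing the prescribed component $n$ by an actual $g\in\Aut(C,B)$ --- is indeed what the converse direction of \ref{typeIIiso} demands and what the paper leaves implicit, but your justification of it is flawed. Proposition \ref{z} says only that $g(u)+u\in\widehat{C}$ is a \emph{necessary} condition on an automorphism; nothing establishes that every $z=m+nu\in\widehat{C}$ arises as $g(u)+u$ for some $g\in\Aut(C,B)$, so the problem does not reduce to solving the single norm equation $\q(m+nu)=m_0$. Any $g$ with $n\neq 0$ must act nontrivially on $B$: if $g|_B=\id$, preservation of the bilinear form forces $\langle a,nu\rangle=0$ for all $a\in B$ and hence $n=0$ (this is exactly the computation in the paper's proposition $\Aut(C)^B\cong\widehat{B}$). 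The division case shows the failure is genuine: there $\widehat{B}$ contains elements $b'\neq e$ with $\q(b')\neq 0$ (the paper's example over $\mathbb{F}_2(\chi_1,\chi_2)$), and \ref{ycon}'s formula still produces $n\in\widetilde{B}$, yet $\Aut(B)=\{\id\}$ by \ref{gidcor}, no $g$ realizes $n\neq 0$, and by \ref{autCconjugate} the involution $t$ is \emph{not} conjugate to $u\mapsto u+e$. So the obstruction lives in the existence of suitable nontrivial automorphisms of $B$ compatible with the prescribed $n$, not in isotropy of the form making $m$ exist; closing the gap honestly requires an explicit construction of $g(v)$ and $g(w)$ subject to the norm, determinant, and $e$-coefficient constraints, in the style carried out for the $\q(b)=0$ case in \ref{conjtoIDqb0}.
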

\begin{proof}
By choosing $b = e$ in \ref{typeIIiso} we have $e = (e+n)b'$.  We are now in the situation described by \ref{ycon}, and so by \ref{typeIIiso} we have that the two maps are $\Aut(C)$-conjugate.
\end{proof}

\begin{lem}\label{conjtoIDqb0}
Let $B$ be a totally singular subalgebra that is not a division algebra. Let $s$ be the involution of type II such that $s(u) = u + e$, and let $t$ be an involution of type II such that $t(u) = u + b$ with $\q(b) = 0$ and $b\neq 0$. Then $t$ is $\Aut(C)$-conjugate to $s$.
\end{lem}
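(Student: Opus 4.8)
The plan is to invoke the conjugacy criterion \ref{typeIIiso} with $s$ as the target, so that $b'=e$: then $t$ and $s$ are $\Aut(C)$-conjugate exactly when there is some $g\in\Aut(C,B)$ with $g(u)=u+m+nu$, $m+nu\in\widehat{C}$, and $g(b)=(e+n)e=e+n$. Before constructing $g$, I would extract the constraint this places on $n$. Since automorphisms of $C$ are isometries of $\q$ \cite{sv00} and $b,n,e$ all lie in the totally singular algebra $B$, applying $\q$ to $g(b)=e+n$ gives $\q(b)=\q(e)+\q(n)+\langle e,n\rangle=1+\q(n)$; as $\q(b)=0$ this forces $\q(n)=1$. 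The problem therefore splits into producing a vector $n\in\widetilde{B}$ with $\q(n)=1$ and then realizing $g(b)=e+n$ by an automorphism.

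The first step is to show such an $n$ exists, i.e.\ that the quasi-linear form $\q|_{\widetilde{B}}=\langle\beta,\gamma,\beta\gamma\rangle$ represents $1$. Because $B$ is not a division algebra, \ref{k2field} gives $[k^2(\beta,\gamma):k^2]\le 2$, so $\q=\llangle\beta,\gamma\rrangle$ is isotropic. I would then argue that isotropy of $\q$ propagates to its pure part: an isotropy relation $a_0^2+\beta a_1^2+\gamma a_2^2+\beta\gamma a_3^2=0$ with $a_0\neq 0$ rescales by $a_0^{-2}$ to express $1$ through $\langle\beta,\gamma,\beta\gamma\rangle$, and the only alternative, that every isotropic vector lies in $\widetilde{B}$, is excluded by comparing $k^2$-spans: since $k^2(\beta,\gamma)=k^2+\q(\widetilde{B})$, the subspace $\q(\widetilde{B})$ has $k^2$-codimension at most one and fails to contain $1$ only when $\beta,\gamma,\beta\gamma$ are $k^2$-proportional, which via \ref{hofflag} collapses to the split case, where one of these is already a square and represents $1$ directly. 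Fixing such an $n$, the element $e+n$ is a nonzero isotropic vector of $B$ whose $e$-coefficient is $1$, matching $\q(b)=0$.

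The main step is to build $g\in\Aut(C,B)$ with $g(b)=e+n$. Here I would use the explicit structure of $B$: in the split case $B\cong k[\epsilon_1,\epsilon_2]/(\epsilon_1^2,\epsilon_2^2)$ and in the indeterminate case $B\cong L[\epsilon]/(\epsilon^2)$ for the two-dimensional division subalgebra $L\subset B$, so that in either case the nonzero isotropic vectors are exactly the nonzero elements of the maximal ideal and $\Aut(B)$ acts transitively on them. This transitivity lets me choose $g_B\in\Aut(B)$ carrying $b$ to the isotropic vector $e+n$. The subtlety is that the extension of $g_B$ to $C=B\oplus Bu$ is not free: by \ref{z} and \ref{Chatgroup1}, any $g\in\Aut(C,B)$ with $g(u)=u+m+nu$ must satisfy $\langle g(a),u\rangle=\langle a,u\rangle+\langle g(a),nu\rangle$ for all $a\in B$, which determines the coefficient $n$ from the way $g_B$ moves the $e$-components of $B$. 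Thus $g(b)=e+n$ is the self-referential requirement that the $n$ induced by $g_B$ agree with $\widetilde{g_B(b)}$ while $(g_B(b))_0=1$; evaluating this at $a=b$ returns precisely $\q(n)=1$, confirming consistency at $b$.

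The hard part will be arranging a single $g_B$ for which this coupling is consistent across all of $B$ at once and whose extension to $C$ respects the octonion multiplication on the $Bu$-component rather than merely the norm form; this is where the explicit multiplication table on the basis $\{e,v,w,vw,u,vu,wu,(vw)u\}$ must enter. Once such a $g_B$ is in hand, any $m$ with $m+nu\in\widehat{C}$ completes $g$, and \ref{typeIIiso} yields $t\cong s$. It is worth noting that this is genuinely the singular counterpart of \ref{conjtoIDqbne0}: there $\q(b')\neq 0$ lets \ref{ycon} invert $b'$ and take $g_B=\id$ with $n=(b')^{-1}+e$, whereas here $e+n$ is forced to be a zero divisor, so $g_B$ must move $b$ nontrivially and the construction cannot be reduced to \ref{ycon}.
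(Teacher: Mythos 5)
Your reduction is the same as the paper's: invoke \ref{typeIIiso} with $b'=e$, note that isometry plus total singularity of $B$ forces $\q(n)=\q(e+n)+\q(e)=\q(b)+1=1$, and observe $b_0=\q(b)=0$; your existence argument for $n\in\widetilde{B}$ with $\q(n)=1$ is also sound (the paper sidesteps it by normalizing the non-division forms to $\llangle 1,\gamma\rrangle$ and $\llangle 1,1\rrangle$, so $n=v$ works). But the argument you put in place of the paper's construction has a false step and then stops before the crux. The transitivity claim fails in the split case: with $\beta=\gamma=1$ and $\epsilon_1=e+v$, $\epsilon_2=e+w$, one has $B\cong k[\epsilon_1,\epsilon_2]/(\epsilon_1^2,\epsilon_2^2)$, and every automorphism preserves $\mathfrak{m}^2=k\,\epsilon_1\epsilon_2=k(e+v+w+vw)$, scaling it by the nonzero factor $a_1d_2+a_2d_1$; hence $\mathfrak{m}\setminus\{0\}$ breaks into the two orbits $\mathfrak{m}\setminus\mathfrak{m}^2$ and $\mathfrak{m}^2\setminus\{0\}$. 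This particular error is repairable, since $b_0=0$ while every nonzero element of $\mathfrak{m}^2$ has nonzero $e$-coefficient, so $b\notin\mathfrak{m}^2$ and you may choose the target $e+n=e+v=\epsilon_1$ in the same orbit.

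The decisive gap is the step you yourself label ``the hard part'' and then defer: \ref{typeIIiso} requires an automorphism $g$ of the octonion algebra $C$ whose coefficient $n$ in $g(u)=u+m+nu$ coincides with $\widetilde{g(b)}$, and by \ref{Chatgroup1} that $n$ is already forced by $g|_B$, since $\langle g(a),nu\rangle=(g(a))_0+a_0$ for all $a\in B$ and the pairing $\langle x,nu\rangle=\beta x_1n_1+\gamma x_2n_2+\beta\gamma x_3n_3$ is nondegenerate on $\widetilde{B}$. Your consistency check at $a=b$ yields only $\langle\widetilde{g(b)},nu\rangle=1$, which is strictly weaker than $\widetilde{g(b)}=n$, so it confirms nothing: matching the forced $n$ with $\widetilde{g_B(b)}$ is a genuine constraint on $g_B$, and a $g_B$ supplied by abstract orbit-transitivity need not satisfy it. This is precisely where the paper's proof does its work: it writes out $g(v),g(w)$ in coordinates with $g(vw)=g(v)g(w)$, imposes the norm conditions and $\det M\neq 0$, derives the coefficient relations (e.g.\ $a_1=a_0+1$, $a_3=a_2$, $b_1=0$, $b_3=b_2$ in the $\llangle 1,\gamma\rrangle$ case), computes $g(b)=b_2\big(\gamma a_2(e+v)+(1+a_0)(w+vw)\big)$ and solves with $a_2=\tfrac{1}{\gamma b_2}$, and in the $\llangle 1,1\rrangle$ case reduces to the solvable equation \ref{ecoeff}. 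Without such an explicit construction (or a structural lemma guaranteeing that an automorphism of $B$ with prescribed image of $b$ extends to $C$ with the matching $n$ --- which you would then have to prove), your proposal reproduces the paper's setup but omits the core of the lemma.
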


\begin{proof}
Suppose $\q(b) = 0$. We want to find some $g \in \Aut(C,B)$ such that $gt = sg$. By \ref{typeIIiso}, we have that $g(b) = e + n$, where $\q(n) = 1$ and $n \in \tilde{B}$. Since $b \in \widehat{B}$, then $b = b_0 e + b_1v + b_2w + b_3vw$, and since $\q(b) = 0$ it follows that $b_0 = 0$. Since $g$ is an automorphism, then $g$ is a bijection on $B$, and $g$ preserves the norm and the bilinear form.

There are two possible isometry classes of quadratic forms $\llangle \beta, \gamma \rrangle$ where $B$ is not a division algebra $\llangle 1, \gamma \rrangle$ and $\llangle 1,1 \rrangle$.
Since $\q(b) = 0$ and $b \in \widehat{B}$, we have
\begin{equation}
\beta b_1^2 + \gamma b_2^2 + \beta\gamma b_3^2 = 0.
\end{equation}
Since $g(e) = e$, we can write the action of $g$ on the basis of $B$:
\begin{align*}
g(e) &= e\\
g(v) &= a_0e + a_1v + a_2w + a_3vw\\
g(w) &= d_0e + d_1v + d_2w + d_3vw\\
g(vw) &= g(v)g(w)\\
&= (a_0d_0 + \beta a_1d_1 + \gamma a_2d_2 + \beta\gamma a_3d_3) e \\
& \ + (a_0d_1 + a_1d_0 + \gamma a_2d_3 + \gamma a_3d_2) v\\
& \ + (a_0d_2 + a_2d_0 + \beta a_1d_3 + \beta a_3d_1) w\\
& \ + (a_0d_3 + a_3d_0 + a_1d_2 + a_2d_1) vw
\end{align*}
where $g(v)$ and $g(w)$ determine the image of $g(vw)$. The images are further subject to the norm conditions:
\begin{align}
a_0^2 + \beta a_1^2 + \gamma a_2^2 + \beta\gamma a_3^2 &= \beta\\
d_0^2 + \beta d_1^2 + \gamma d_2^2 + \beta\gamma d_3^2 &= \gamma.
\end{align}
Since $g$ is a bijection, we have $\det(M) \ne 0$, where
\[
M = 
\begin{bmatrix}
1 & a_0 & d_0 & a_0d_0 + a_1d_1 + a_2d_2 + a_3d_3\\
0 & a_1 & d_1 & a_0d_1 + a_1d_0 + a_2d_3 + a_3d_2\\
0 & a_2 & d_2 & a_0d_2 + a_2d_0 + a_1d_3 + a_3d_1\\
0 & a_3 & d_3 & a_0d_3 + a_1d_2 + a_2d_1 + a_3d_0
\end{bmatrix},
\]
which is equivalent to 
\begin{align}
\det(M) &= a_2^2 d_1^2 + a_1^2 d_2^2 + \beta a_3^2 d_1^2 + \beta a_1^2 d_3^2  + \gamma a_3^2 d_2^2  + \gamma a_2^2 d_3^2  \\
&= a_1^2(d_2^2 + \beta d_3^2) + a_2^2(d_1^2 + \gamma d_3^2) + a_3^2(\beta d_1^2 + \gamma d_2^2)
\end{align}

Assuming the quadratic form is $\llangle 1, \gamma \rrangle$, the following conditions are due to the norm,
\begin{align*}
a_0^2 + a_1^2 + \gamma a_2^2 +\gamma a_3^2 &= 1\\
d_0^2 + d_1^2 + \gamma d_2^2 +\gamma d_3^2 &= \gamma\\
b_1^2 + \gamma b_2^2 + \gamma b_3^2 &= 0.
\end{align*}
and by equating terms containing the non-square $\gamma$, we have the following simple conditions
\[  a_1 = a_0 + 1, \ a_3 = a_2, \ d_1 = d_0, \ d_3 = d_2 + 1, \ b_1 = 0, \ b_3 = b_2. \]

Using these, the condition on the determinant reduces significantly to
\begin{align}
\det(M) &= a_1^2(d_2^2 + d_3^2) + a_2^2(d_1^2 + \gamma d_3^2) + a_3(d_1^2 + \gamma d_2^2)\\
&= 1 + a_0^2 + \gamma a_2^2.
\end{align}
Therefore, $g$ is a bijection only if the both of these conditions are satisfied: $a_0 \ne 1$ and $a_2 \ne 0$. Applying all the above conditions to the coefficients in the map $g$ from above, we have
\begin{align}
g(e) &= e\\
g(v) &= a_0e + (a_0 + 1)v + a_2w + a_2vw\\
g(w) &= d_0e + d_0v + d_2w + (d_2 + 1)vw\\
g(vw) &= (\gamma a_2 + d_0) e + (\gamma a_2 + d_0) v + (a_0 + d_2 + 1) w + (a_0 + d_2) vw.
\end{align}
We want to choose coefficients of $g(v)$ and $g(w)$ so that $g(b) = e + n$. Observe that
\begin{align*}
g(b) &=  g(b_2 w + b_2vw)\\
&= b_2 \big[g(w) + g(vw)\big]\\
&= b_2\big(d_0e + d_0v + d_2w + (d_2 + 1)vw\big) \\
&+ b_2\big((\gamma a_2 + d_0) e + (\gamma a_2 + d_0) v + (a_0 + d_2 + 1) w + (a_0 + d_2) vw \big)\\
&= b_2\Big( \gamma a_2(e + v) + (1 + a_0)(w + vw)  \Big).
\end{align*}
From this expression, we must choose $a_2$ so that $\gamma b_2 a_2 = 1$; in other words, $a_2 = \frac{1}{\gamma b_2}$. Note that $b_2 \ne 0$ and $\gamma \ne 0$, so this is possible to do.
%

Now suppose the bilinear form is $\llangle 1, 1\rrangle$ and $\gamma = 1 = \beta$. The conditions on $g(v)$ and $g(w)$ are $a_0^2 + a_1^2 + a_2^2 + a_3^2 = 1$ and $d_0^2 + d_1^2 + d_2^2 + d_3^2 = 1$; additionally, from $\q(b) = 0$, we have $b_1^2 + b_2^2 + b_3^2 = 0$. So it follows that
\begin{align}
a_0 + a_1 + a_2 + a_3 &= 1\\
d_0 + d_1 + d_2 + d_3 &= 1\\
b_1 + b_2 + b_3 &= 0.
\end{align}
Notice that $b_1 + b_2 = b_3$, and if $b_1 + b_2 = 0$ then $b_1 = b_2$ and $b_3 = 0$. The condition on the determinant reduces to
\begin{equation}
\det(M) = a_1^2(d_2^2 + d_3^2) + a_2^2(d_1^2 + d_3^2) + a_3^2(d_1^2 + d_2^2).
\end{equation}
Then we have
\begin{align*}
g(e) &= e\\
g(v) &= a_0e + a_1v + a_2w + a_3vw\\
g(w) &= d_0e + d_1v + d_2w + d_3vw\\
g(vw) &= (a_0d_0 + a_1d_1 + a_2d_2 + a_3d_3) e \\
& \ + (a_0d_1 + a_1d_0 + a_2d_3 + a_3d_2) v\\
& \ + (a_0d_2 + a_2d_0 + a_1d_3 + a_3d_1) w\\
& \ + (a_0d_3 + a_3d_0 + a_1d_2 + a_2d_1) vw
\end{align*}
We must have $g(b) = e + n$ where $n \in \tilde{B}$. This implies that
\begin{equation}\label{ecoeff}
1 = b_1a_0 + b_2d_0 + b_3\big( a_0d_0 + a_1d_1 + a_2d_2 + a_3d_3\big).
\end{equation}
This condition depends on which field we are using; however, there are always ways to choose the coefficients of $g(v)$ and $g(w)$ in such a way that \ref{ecoeff} is satisfied.
\end{proof}


\begin{prop}
If $B$ is a division algebra, then no two elements in $B$ have the same norm.
\end{prop}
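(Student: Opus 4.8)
The plan is to exploit the fact that the norm form on a totally singular subalgebra is quasi-linear (additive), together with the anisotropy forced by $B$ being a division algebra. First I would recall that since $B$ is totally singular, the bilinear form vanishes identically on $B$, so that for any $a, b \in B$ the defining identity $\q(a+b) = \q(a) + \q(b) + \bi{a,b}$ collapses to
\[
\q(a+b) = \q(a) + \q(b).
\]
In other words, $\q$ restricted to $B$ is additive, reflecting the purely inseparable nature of the quasi-Pfister form $\llangle \beta, \gamma \rrangle$.

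Next I would record what the division hypothesis buys us. By \ref{k2field}(\ref{Bdiv}), together with the bijection of \ref{hofflag} identifying $B$ with the degree-$4$ purely inseparable extension $k^2(\beta,\gamma)$, the subalgebra $B$ is a division algebra precisely when $\llangle \beta, \gamma \rrangle$ is anisotropic; that is, the only $x \in B$ with $\q(x) = 0$ is $x = 0$. (Equivalently, every nonzero $b \in B$ is invertible via $b^{-1} = \q(b)^{-1}b$, which requires $\q(b) \neq 0$.)

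With these two observations in hand the statement is immediate. Suppose $a, b \in B$ satisfy $\q(a) = \q(b)$. By quasi-linearity $\q(a+b) = \q(a) + \q(b)$, and since $\ch(k) = 2$ this equals $\q(a) + \q(a) = 0$. Anisotropy then forces $a + b = 0$, hence $a = b$, so the norm map is injective on $B$.

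I do not anticipate a genuine obstacle here: the content of the proposition lies entirely in the interplay between the quasi-linear structure of the totally singular norm form and the anisotropy equivalent to the division property. The only point requiring care is the explicit identification of \emph{division algebra} with \emph{anisotropic norm form}, which is exactly the content of \ref{k2field} and \ref{hofflag}; once that is invoked, the argument reduces to the one-line computation above.
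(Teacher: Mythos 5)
Your proof is correct and follows essentially the same route as the paper: compute $\q(a+b) = \q(a) + \q(b) + \langle a,b\rangle = \q(a)+\q(b) = 0$ using that the bilinear form vanishes on the totally singular subalgebra $B$, then conclude $a+b=0$ from anisotropy. In fact you are slightly more complete than the paper, which stops at $\q(a+b)=0$ and leaves implicit the final step that the division hypothesis (anisotropy of $\q$ on $B$, since $b^2 = \q(b)e$ makes any nonzero isotropic element nilpotent) forces $a = b$.
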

\begin{proof}
Suppose $a,b \in B$ such that $\q(a) = \q(b)$. Then
\[ \q(a+b) = \q(a) + \q(b) + \langle a,b \rangle = \q(a) + \q(b) = 0. \]
\end{proof}

\begin{cor}\label{gidcor}
If $B$ is a division algebra then $\Aut(B) = \{\id\}$.
\end{cor}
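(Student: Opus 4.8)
The plan is to derive the triviality of $\Aut(B)$ directly from the preceding proposition, which asserts that the norm $\q$ is injective on a division $B$. The one additional ingredient I need is that every $k$-algebra automorphism of $B$ preserves $\q$; once that is in hand, injectivity of $\q$ forces every automorphism to fix every element of $B$.

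First I would verify that any $g \in \Aut(B)$ is norm-preserving. Here I exploit that $B$ is a Clifford algebra, so every $b \in B$ satisfies the reduced relation $b^2 = \q(b)e$ (this is the simplification of the minimum equation \ref{mineq} recorded at the end of \ref{4dalgs}, coming from the fact that all elements of $B$ are mutually orthogonal, so in particular $\langle b, e\rangle = 0$). Since $g$ is an algebra homomorphism fixing the identity $e$, applying $g$ to $b^2 = \q(b)e$ gives $g(b)^2 = \q(b)e$, while the same reduced relation applied to the element $g(b)$ gives $g(b)^2 = \q(g(b))e$. Comparing the coefficients of $e$ yields $\q(g(b)) = \q(b)$ for every $b \in B$.

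Then I would conclude as follows. Fix $b \in B$ and set $a = g(b)$. By the previous step $\q(a) = \q(b)$, and since $B$ is a division algebra the preceding proposition guarantees that no two elements of $B$ share the same norm; hence $a = b$, i.e. $g(b) = b$. As $b$ was arbitrary, $g = \id$, and therefore $\Aut(B) = \{\id\}$.

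I expect no serious obstacle here: the only point requiring care is the justification that automorphisms preserve the norm, and this follows immediately from the Clifford relation $b^2 = \q(b)e$ rather than from any appeal to $g$ being an isometry of the ambient octonion algebra. In particular $g$ need only be an abstract $k$-algebra automorphism of $B$, not the restriction of an element of $\Aut(C)$, so the argument applies to $\Aut(B)$ in full generality.
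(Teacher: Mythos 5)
Your proof is correct and follows essentially the route the paper intends: the corollary is stated without proof precisely because it is meant to be immediate from the preceding proposition that no two (distinct) elements of a division $B$ share a norm, which is exactly how you conclude. Your one supplied detail --- that any abstract $k$-algebra automorphism of $B$ preserves $\q$ via the Clifford relation $b^2 = \q(b)e$ rather than via isometry of the ambient octonion algebra --- is the right way to fill the gap the paper leaves implicit, and it correctly shows the statement holds for all of $\Aut(B)$, not merely for restrictions of elements of $\Aut(C)$.
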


If $\mathcal{I}_t$ is $\Aut(C)$-conjugate to $\mathcal{I}_s$ then we have shown that $g(b) = c+cn$ by \ref{typeIIiso}. With this in mind we can prove the following.

\begin{prop}
Let $B$ be a division algebra and let $g \in \Aut(C)$ such that $g(B) = B$. Then $g(u) = u + m$ for $m \in \widehat{B}$.
\end{prop}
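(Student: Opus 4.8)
The plan is to exploit the fact that, when $B$ is a division algebra, the restriction $g|_B$ is forced to be trivial, which collapses the invariance hypothesis $g(B) = B$ into the much stronger statement that $g$ fixes $B$ pointwise. First I would observe that since $g \in \Aut(C)$ is an algebra automorphism and $g(B) = B$, the restriction $g|_B \colon B \to B$ is itself an algebra automorphism, so $g|_B \in \Aut(B)$. Because $B$ is a division algebra, \ref{gidcor} gives $\Aut(B) = \{\id\}$, whence $g|_B = \id$; that is, $g$ fixes $B$ elementwise.

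Once $g$ fixes $B$ elementwise, the conclusion is immediate from the earlier proposition establishing $\Aut(C)^B \cong \widehat{B}$, whose proof shows precisely that any automorphism fixing $B$ pointwise has the form $g(u) = u + m$ with $m \in \widehat{B}$. To keep the argument self-contained, I would instead reproduce that short computation directly: by \ref{z} we may write $g(u) = u + m + nu$ with $m \in B$, $n \in \widetilde{B}$, and $\q(m + nu) = \langle m, u\rangle$. Since $g$ is an isometry and $g(a) = a$ for every $a \in B$, evaluating the bilinear form gives $\langle a, u\rangle = \langle a, u + m + nu\rangle$; total singularity of $B$ kills the term $\langle a, m\rangle$, and projecting onto $\widetilde{B}$ leaves $\langle \tilde{a}, nu\rangle = 0$ for all $\tilde{a} \in \widetilde{B}$. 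Expanding in the basis and using $\langle v, vu\rangle = \beta$, $\langle w, wu\rangle = \gamma$, $\langle vw, (vw)u\rangle = \beta\gamma$ with $\beta, \gamma \neq 0$ forces the coordinates of $n$ to vanish, so $n = 0$. Then $\q(m) = \langle m, u\rangle = m_0$, and since $m \in B \subset ke^{\perp}$ automatically, this places $m \in \widehat{B}$ as required.

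The main obstacle here is conceptual rather than computational: the whole force of the division hypothesis is spent in trivializing $\Aut(B)$, thereby upgrading $g(B) = B$ to the pointwise-fixing condition. Without division, $g|_B$ could be a nontrivial automorphism of $B$, and the isometry calculation would only recover the general relation $\langle g(a), u\rangle = \langle a, u\rangle + \langle g(a), nu\rangle$ of \ref{Chatgroup1} rather than $n = 0$; so the crux is simply to recognize that \ref{gidcor} removes that possibility and reduces everything to a case already handled.
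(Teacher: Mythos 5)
Your proposal is correct and follows essentially the same route as the paper: both arguments use \ref{gidcor} to upgrade $g(B)=B$ to $g|_B = \id$, then exploit the isometry property $\langle g(a), g(u)\rangle = \langle a, u\rangle$ together with the total singularity of $B$ to derive $\langle a, nu\rangle = 0$ for all $a \in B$, and evaluate on the basis vectors $v, w, vw$ (using $\beta, \gamma \neq 0$) to force $n = 0$ and conclude $m \in \widehat{B}$. Your version is in fact slightly cleaner, since the paper's write-up contains a typo ($\langle a, mu\rangle$ where $\langle a, nu\rangle$ is meant) that your computation avoids.
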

\begin{proof}
Recall that $\Aut(C) \subset \Orth(\q)$ so $\langle g(a),g(u) \rangle = \langle a,u \rangle$ for all $a \in B$.  Now since $g(B) = B$ we have by \ref{gidcor} that $g|_B = \id$.  So
\[ \langle g(a), g(u) \rangle = \langle a, u + m + nu \rangle. \]
Notice that $\langle a,m \rangle = 0$, so we have
\[ \langle a, u \rangle = \langle a, u \rangle + \langle a, mu \rangle. \]
Which gives us the relation
\[ \langle a, nu \rangle = 0, \]
for all $a \in B$.  In particular it must be true for $v \in B$, where $v^2 = \beta e$.  Computing the bilinear form of $v$ with $nu$ where $n \in \widetilde{B}$, we have
\[ \langle v, nu \rangle = \langle v, n_1 vu \rangle = \beta n_1 \langle e,u \rangle = \beta n_1 = 0. \]
Since $\beta \neq 0$, it follows that $n_1 = 0$.  A similar argument shows that $n_2 = 0$ and $n_3 = 0$. Hence $n = 0$.  This puts $m \in \widehat{B}$.
\end{proof}

This reduces the equation $b = c+cn$ to $b=c$.  So we have the following corollary. 

\begin{cor}\label{autCconjugate}
Let $B$ be a four dimensional totally singular division subalgebra of $C$ and let $t$ and $s$ be involutions of type II such that $t(u) = u+b$ and $s(u) = u+c$. Then $\mathcal{I}_t$ is $\Aut(C)$-conjugate to $\mathcal{I}_s$ if and only if $b=c$.
\end{cor}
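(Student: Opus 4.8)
The plan is to read the corollary off from Proposition~\ref{typeIIiso}, specializing its conjugacy criterion to the division case. Since both $t$ and $s$ fix $B$ elementwise and satisfy $t(x+yu) = x + y(u+b)$, $s(x+yu) = x + y(u+c)$ by \ref{tinduce}, each is completely determined by its value on $u$. Hence the backward implication is immediate: if $b = c$ then $t = s$ as maps, so $\mathcal{I}_t = \mathcal{I}_s$ and the two are trivially $\Aut(C)$-conjugate via the identity.

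For the forward direction I would suppose $\mathcal{I}_t$ is $\Aut(C)$-conjugate to $\mathcal{I}_s$; equivalently, since the center of $\Aut(C)$ is trivial (as in the proof of \ref{class}), that $t$ and $s$ are $\Aut(C)$-conjugate. By \ref{typeIIiso} there exists $g \in \Aut(C,B)$ with $g(u) = u + m + nu$, $m + nu \in \widehat{C}$, and $g(b) = (e+n)c$; note $g(B) = B$ since $g \in \Aut(C,B)$. I then invoke the two facts special to the division case. First, the proposition immediately preceding this corollary shows that when $B$ is a division algebra any $g$ with $g(B) = B$ satisfies $g(u) = u + m$ with $m \in \widehat{B}$; in particular $n = 0$, so the relation collapses to $g(b) = c$. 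Second, by \ref{gidcor} the restriction $g|_B$ lies in $\Aut(B) = \{\id\}$, so $g$ fixes $B$ elementwise; as $b \in \widehat{B} \subset B$ this gives $g(b) = b$. Combining the two equalities yields $b = c$, which closes the reverse implication.

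The argument is essentially bookkeeping once the two preceding results are in hand, so there is no genuinely hard computation here; the content is simply that \emph{division} forces both $n = 0$ and $g|_B = \id$, and these together trivialize the conjugating datum $g(b) = (e+n)c$ down to $b = c$. The only point that warrants a word of care is the translation between conjugacy of the inner automorphisms $\mathcal{I}_t, \mathcal{I}_s$ and conjugacy of the involutions $t, s$ themselves, which I would justify exactly as in \ref{class} using the triviality of $Z(\Aut(C))$.
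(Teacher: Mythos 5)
Your proof is correct and takes essentially the same route as the paper: both directions reduce to \ref{typeIIiso}'s condition $g(b) = (e+n)c$, which the division hypothesis trivializes via the proposition preceding the corollary (forcing $n = 0$) and \ref{gidcor} (forcing $g|_B = \id$, so $g(b) = b$), yielding $b = c$. Your explicit justification of the passage between conjugacy of $\mathcal{I}_t, \mathcal{I}_s$ and conjugacy of $t, s$ via the triviality of $Z(\Aut(C))$ is a detail the paper leaves implicit, but it is the same argument used in \ref{class}.
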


So for each element of $\widehat{B}$ the conjugacy class consists of a single element. It is important to note that there are elements of $\widehat{B}$ that are not $0$ or $e$.

\begin{ex} \normalfont
Let $k= \mathbb{F}_2(\chi_1,\chi_2)$, and choose $\q \sim \llangle \chi_1, \chi_2 \rrangle$. Then the element
\[ \dfrac{\chi_1^2}{\chi_1^2+\chi_2} e + \dfrac{\chi_1}{\chi_1^2+\chi_2} w \in \widehat{B}. \]
To see this we take the norm
\begin{align*}
\q \left( \dfrac{\chi_1^2}{\chi_1^2+\chi_2} e + \dfrac{\chi_1}{\chi_1^2+\chi_2} w \right) &= \left(\dfrac{\chi_1^2}{\chi_1^2+\chi_2} \right)^2 + \chi_2\left(\dfrac{\chi_1}{\chi_1^2+\chi_2}\right)^2 \\
&= \dfrac{\chi_1^4}{\chi_1^4+\chi_2^2}  + \chi_2\dfrac{\chi_1^2}{\chi_1^4+\chi_2^2} \\
&= \dfrac{\chi_1^4 + \chi_1^2\chi_2}{\chi_1^4+\chi_2^2} \\
&= \dfrac{\chi_1^2(\chi_1^2+\chi_2)}{(\chi_1^2+\chi_2)(\chi_1^2+\chi_2)} \\
&= \dfrac{\chi_1^2}{\chi_1^2+\chi_2}.
\end{align*}
\end{ex}

\begin{prop}\label{divBfixBhat}
Let $B$ be a division algebra and $t$ an involution of type II such that $t(u) = u + b$. Then $\Aut(C)^{\mathcal{I}_t} \cong \widehat{B}$.
\end{prop}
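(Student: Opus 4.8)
The plan is to prove the stronger set-level identity $\Aut(C)^{\mathcal{I}_t} = \Aut(C)^B$ and then quote the already-proven isomorphism $\Aut(C)^B \cong \widehat{B}$ to conclude. So everything reduces to establishing two inclusions between the centralizer of $t$ and the pointwise stabilizer of $B$.

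First I would handle $\Aut(C)^{\mathcal{I}_t} \subseteq \Aut(C)^B$. Given $g$ commuting with $t$, Proposition \ref{gfixB} (applied with $C^t = B$) yields $g(B) = B$, so the restriction $g|_B$ lies in $\Aut(B)$. The division hypothesis enters here decisively: by \ref{gidcor}, $\Aut(B) = \{\id\}$, forcing $g|_B = \id$, i.e. $g$ fixes $B$ elementwise and hence $g \in \Aut(C)^B$. For the reverse inclusion I would note that $t$ itself lies in $\Aut(C)^B$ (being of type II, it fixes $B$ pointwise), and that $\Aut(C)^B \cong \widehat{B}$ is abelian; therefore any $g \in \Aut(C)^B$ commutes with $t$, which is precisely the statement $g \in \Aut(C)^{\mathcal{I}_t}$. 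Combining the two inclusions gives $\Aut(C)^{\mathcal{I}_t} = \Aut(C)^B \cong \widehat{B}$.

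There is no serious computational obstacle here; the content is conceptual. The point to get right is that the division assumption collapses the general semidirect-product description $\Aut(B) \ltimes \widehat{B}$ of such fixed point groups down to the single factor $\widehat{B}$, precisely because $\Aut(B) = \{\id\}$ by \ref{gidcor}. As an alternative to the abelian-group shortcut for the reverse inclusion, I could instead verify $gt = tg$ directly on a general element $x + yu$ using $t(x+yu) = x + y(u+b)$ and $g(x+yu) = x + y(u+m)$; this is a short, routine check, but invoking abelianness of $\Aut(C)^B$ avoids it entirely.
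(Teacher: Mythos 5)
Your proof is correct and takes essentially the same route as the paper's: both reduce the claim to the identity $\Aut(C)^{\mathcal{I}_t} = \Aut(C)^B$ by combining \ref{gfixB} (invariance of $B$) with \ref{gidcor} ($\Aut(B) = \{\id\}$ in the division case), and then quote the previously established isomorphism $\Aut(C)^B \cong \widehat{B}$. The only substantive difference is that you explicitly verify the reverse inclusion $\Aut(C)^B \subseteq \Aut(C)^{\mathcal{I}_t}$, using that $t$ itself lies in $\Aut(C)^B$ and that this group is abelian, whereas the paper's proof leaves that direction implicit --- a small but genuine improvement in completeness.
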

\begin{proof}
Recall that, by \ref{gidcor}, $\Aut(B) = \{\id\}$.  If $g \in \Aut(C)^{\mathcal{I}_t}$, we have shown that $g(B) = B$, so $g|_B \in \Aut(B) = \{\id\}$.  This means that $g(u)$ completely determines $g$.  And we have shown that, when $B$ is a division algebra, the admissible images of $u$ under $g \in \Aut(C)^{\mathcal{I}_t}$  consist of $g(u) = u + m$ where $m \in \widehat{B}$.  So $g\in \Aut(C)^B \cong \widehat{B}$.
\end{proof}

\begin{thm}\label{classesBsummary}
Let $B$ be a totally singular four dimensional subalgebra of $C$ with a quadratic form $\q$.
\begin{enumerate}[$(1)$]
\item If $\q \sim \llangle 1,1 \rrangle$ or $\q \sim \llangle 1,\gamma \rrangle$ $\gamma \not\in k^2$, then there is one $\Aut(C)$-conjugacy class of involutions of type II fixing $B$ and one isomorphism class of $\Aut(C)^{\mathcal{I}_t} \cong \Aut(B) \ltimes \widehat{B}$ \\
\item If $\q \sim \llangle \beta,\gamma \rrangle$ $\gamma \not\in k^2$ and $\beta \not\in k^2(\gamma)$, then there is an $\Aut(C)$-conjugacy class of involutions of type II fixing $B$ for each element of $\widehat{B}$ and only one isomorphism class of $\Aut(C)^{\mathcal{I}_t} \cong \widehat{B}$.
\end{enumerate}
\end{thm}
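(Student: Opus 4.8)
The plan is to assemble the results established throughout this section, using \ref{k2field} to sort the isometry types of $\q$ into the division and non-division cases. By \ref{k2field}, the index $[k^2(\beta,\gamma):k^2]$ equals $1$, $2$, or $4$, and up to isometry every totally singular four dimensional form is one of $\llangle 1,1\rrangle$ (split, index $1$), $\llangle 1,\gamma\rrangle$ with $\gamma \notin k^2$ (indeterminate, index $2$), or $\llangle \beta,\gamma\rrangle$ with $\gamma \notin k^2$ and $\beta \notin k^2(\gamma)$ (division, index $4$). Thus part $(1)$ collects exactly the two non-division types and part $(2)$ is the division type, and it remains only to read off the conjugacy count and the fixed point group in each.

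For part $(1)$, I would first argue that there is a single $\Aut(C)$-conjugacy class of nontrivial involutions of type II fixing $B$. By \ref{tinduce} any such involution has the form $t(u) = u + b$ with $b \in \widehat{B}$; if $\q(b) \neq 0$ then \ref{conjtoIDqbne0} conjugates $t$ to the map $u \mapsto u + e$, and if $\q(b) = 0$ with $b \neq 0$ then \ref{conjtoIDqb0} does the same. Hence every nontrivial involution of type II is $\Aut(C)$-conjugate to $s \colon u \mapsto u + e$, giving one class. For the fixed point group I would invoke the proposition computing $\Aut(C)^{\mathcal{I}_s} \cong \Aut(B) \ltimes \widehat{B}$ for this distinguished $s$, together with \ref{isofixpoint}, which guarantees that conjugate involutions have $\Aut(C)$-conjugate, hence isomorphic, fixed point groups; so there is exactly one isomorphism class, of the stated form.

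For part $(2)$, $B$ is a division algebra, so the situation is governed by \ref{autCconjugate}: two involutions $t(u) = u + b$ and $s(u) = u + c$ satisfy $\mathcal{I}_t \cong \mathcal{I}_s$ if and only if $b = c$. This partitions the involutions of type II into one $\Aut(C)$-conjugacy class for each element of $\widehat{B}$. The fixed point group is then supplied directly by \ref{divBfixBhat}, namely $\Aut(C)^{\mathcal{I}_t} \cong \widehat{B}$, which is independent of $b$ and so yields the single isomorphism class asserted.

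The routine content is entirely contained in the cited lemmas, so I do not expect a genuine obstacle; the one point requiring care is the case exhaustion in the first paragraph, namely verifying that the normalizations $\llangle 1,1\rrangle$, $\llangle 1,\gamma\rrangle$, and division $\llangle \beta,\gamma\rrangle$ exhaust the isometry classes and align precisely with indices $1$, $2$, $4$ and with the non-division/division dichotomy of \ref{k2field}. I should also flag the mild abuse in part $(2)$ that $b = 0 \in \widehat{B}$ corresponds to the identity rather than an order-two map, so the genuine involutions are indexed by the nonzero elements of $\widehat{B}$.
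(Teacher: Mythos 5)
Your proposal is correct and follows essentially the same route as the paper's proof: the non-division cases are handled by \ref{conjtoIDqbne0} and \ref{conjtoIDqb0} (reducing everything to $u \mapsto u + e$) together with \ref{isofixpoint} and the semidirect-product computation of the fixed point group, while the division case follows from \ref{autCconjugate} and \ref{divBfixBhat}. Your added care in exhausting the isometry classes via \ref{k2field} and in noting that $b = 0$ yields the identity rather than an involution is a reasonable tightening of details the paper leaves implicit, but it does not change the argument.
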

\begin{proof}
For the cases when $\beta = 1$ from \ref{conjtoIDqbne0} and \ref{conjtoIDqb0} we know that there is only one $\Aut(C)$-conjugacy class of involutions of type II, and so by \ref{isofixpoint} there is only one $\Aut(C)$-conjugacy class of fixed point groups in $\Aut(C)$.  When $\q$ defines a division algebra $B$,  by \ref{autCconjugate} and \ref{divBfixBhat}, we have many $\Aut(C)$-conjugacy classes of involutions all having the same centralizer.
\end{proof}

This leads us into a discussion of the related Galois cohomology following the notation and conventions set by Serre in \cite{se97}. In general, the Galois cohomology groups $H^0(k,\Aut(C_K)) \cong \Aut(C_k)$ and $H^1(k,\Aut(C_K))$ give us the $K/k$-forms of $\Aut(C_K)$ over $k$.  We are interested in the cohomology group that corresponds to a class of involutions on $\Aut(C)$.  In particular we are interested in the $K/k$-forms of $\Aut(C_K,D_K)$ and $\Aut(C_K,B_K)$ the subgroups that fix our four dimensional subalgebras of $C$.   Let $K$ be the algebraic closure of $k$ then and recall that when $\ch(k) \neq 2$ all four dimensional subalgebras are quaternion subalgebras and we have 
\[ H^1(k,\Aut(C_K,D_K)) \cong H^1(k,D_K) \cong H^1(k, \Aut(C_K)^{\mathcal{I}_t}). \]
When $\ch(k) = 2$
\[ H^1(k,B_K) \cong H^1(k,\Aut(C_K,B_K)) \cong H^1(k, \Aut(C_K)^{\mathcal{I}_t}), \]
when $B$ is not a division algebra and $t$ is an involution of type II that fixes $B$ elementwise.  This gives us the following bijection.
\begin{prop}
There is a bijection between the set of unique biquadratic field extensions
\[
k^2 \subset k^2(\beta,\gamma) \subset k
\]
and elements of $H^1(k,\Aut(C_K)^{\mathcal{I}_t})$.
\end{prop}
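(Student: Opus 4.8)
The plan is to realize the asserted bijection as the composite of two well-understood correspondences: the descent-theoretic interpretation of $H^1(k,\Aut(C_K)^{\mathcal{I}_t})$ as the set of $K/k$-forms of the fixed four-dimensional totally singular subalgebra $B$, and the Hoffmann--Laghribi correspondence \ref{hofflag} between such forms and the purely inseparable subfields $k^2(\beta,\gamma)$ of $k$. Concretely, a cohomology class will be sent to the isometry class of the quasi-Pfister form carried by its twisted subalgebra, and that isometry class to the field it generates over $k^2$; the degree-$4$ (biquadratic) subfields are then singled out as exactly the division case by \ref{k2field}\eqref{Bdiv}.

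First I would invoke the cohomology chain recorded immediately above,
\[ H^1(k,\Aut(C_K)^{\mathcal{I}_t}) \cong H^1(k,\Aut(C_K,B_K)) \cong H^1(k,B_K), \]
in order to reduce the problem to counting $K/k$-forms of $B$. The point to emphasize is that everything is computed over the algebraic closure $K$, where $B_K$ is split; thus one uses the split structure $\Aut(C_K)^{\mathcal{I}_t}\cong\Aut(B_K)\ltimes\widehat{B_K}$, and division-ness of $B$ is a property of the $k$-form, not of the $K$-group (so \ref{gidcor} and \ref{divBfixBhat} describe the $k$-points, not the geometric group). The first isomorphism in the chain is the one requiring care: it follows from the split semidirect decomposition, and I would justify it by the usual section argument for $H^1$ of a split extension, deferring the characteristic-$2$ subtleties to the analysis below. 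The standard twisting principle then interprets $H^1(k,\Aut(C_K)^{\mathcal{I}_t})$ as the pointed set of isomorphism classes of $k$-forms of the split totally singular algebra $B_K$, that is, as the totally singular four-dimensional subalgebras of $C$ up to isomorphism.

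Next I would translate these forms into quadratic data. As explained in \ref{4Dsubalgebras}, every such subalgebra is determined up to isomorphism by its totally singular $2$-fold quasi-Pfister form $\llangle\beta,\gamma\rrangle\cong\langle 1,\beta,\gamma,\beta\gamma\rangle$. Applying \ref{hofflag} sends this isometry class to the subfield $k^2(\beta,\gamma)\subset k$, and \ref{k2field} records that the anisotropic, degree-$4$ case is precisely the division case, giving the genuine biquadratic towers $k^2\subset k^2(\beta,\gamma)\subset k$. Assembling the three bijections --- cohomology classes to forms of $B$, forms of $B$ to quasi-Pfister forms, and quasi-Pfister forms to subfields --- yields the stated correspondence.

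The main obstacle I anticipate is the cohomological reduction of the first step in characteristic $2$. The centralizer $\Aut(C_K)^{\mathcal{I}_t}$ need not be smooth: the factor $G_+(k^2)$ inside $\widehat{B}$ reflects Frobenius-twisted, infinitesimal structure, so the relevant $H^1$ must be taken in the flat topology, where groups of $\mu_2$- and $\alpha_2$-type have nonzero cohomology (with $H^1_{\mathrm{fl}}(k,\mu_2)\cong k^*/k^{*2}$ and $H^1_{\mathrm{fl}}(k,\alpha_2)\cong k/k^2$) exactly when $k$ is imperfect. This imperfection is precisely what produces nontrivial biquadratic extensions, so rather than vanishing, the infinitesimal cohomology is what encodes the generators $\beta,\gamma$. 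The delicate task is therefore to disentangle the smooth and infinitesimal parts of the group scheme $\Aut(C_K)^{\mathcal{I}_t}$ and to show that the invariant extracted from a flat cocycle --- the square classes of $\beta$ and $\gamma$, equivalently the subfield $k^2(\beta,\gamma)$ --- is a complete and well-defined invariant of its class, so that the resulting map to biquadratic extensions is a bijection and not merely a surjection.
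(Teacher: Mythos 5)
Your skeleton is in fact the paper's: the printed proof is nothing more than the citation of \ref{classesBsummary} and \ref{hofflag}, resting on the displayed chain
\[
H^1(k,\Aut(C_K)^{\mathcal{I}_t}) \cong H^1(k,\Aut(C_K,B_K)) \cong H^1(k,B_K),
\]
and you travel that same chain, substituting the general twisting principle (cohomology classes correspond to $K/k$-forms of $B$) for the appeal to \ref{classesBsummary}, and then applying \ref{hofflag} together with \ref{k2field} exactly as the paper intends. Two of your glosses are genuinely sharper than the paper's text: the observation that division-ness is a property of the $k$-form and not of the geometric group, so \ref{gidcor} and \ref{divBfixBhat} do not obstruct the split structure $\Aut(B_K)\ltimes\widehat{B_K}$ over $K$; and the observation that over an imperfect field the purely inseparable forms are invisible to \'etale/Galois cohomology, since the relevant factors of $\widehat{B}$ are infinitesimal and only flat cohomology carries $k/k^2$ --- so the paper's $H^1$ can only be read fppf. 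The paper is silent on both points.

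The gap is that your final paragraph defers precisely the content of the proposition. Showing that the square classes of $\beta,\gamma$ --- equivalently the subfield $k^2(\beta,\gamma)$ --- form a \emph{complete, well-defined} invariant of a flat cohomology class is exactly the injectivity and surjectivity of the claimed bijection, and you name this as a ``delicate task'' rather than performing it: you never show that every flat cocycle for $\Aut(B_K)\ltimes\widehat{B_K}$ is cohomologous to one arising from an actual totally singular subalgebra of $C$, nor that non-isomorphic towers yield non-cohomologous cocycles, nor do you carry out the promised separation of the smooth and infinitesimal parts of the group scheme. The paper discharges (or at least papers over) this bookkeeping by citing \ref{classesBsummary}, whose content is that conjugacy classes of type II involutions and of their fixed-point groups are indexed by the isomorphism class of $B$ alone; your twisting argument still owes an equivalent statement. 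One smaller point of care: as a pointed set the $H^1$ contains the trivial class (split $B$, i.e.\ $k^2(\beta,\gamma)=k^2$) and the intermediate classes of \ref{k2field}(2), so the correspondence must be with all towers $k^2 \subseteq k^2(\beta,\gamma) \subseteq k$, with the genuinely biquadratic (degree-$4$) ones corresponding to the division case; your opening sentence, which ``singles out'' the degree-$4$ subfields, should not be read as restricting the target to the anisotropic forms only.
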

\begin{proof}
This follows from \ref{classesBsummary} and \ref{hofflag}.
\end{proof}
When $t$ is of type I we have that the $K/k$-forms associated with $t$ are trivial since $t$ always fixes a split quaternion subalgebra of which there is only one isomorphism class.

\bibliographystyle{plain}

\end{document}